\documentclass[aap]{imsart}

\RequirePackage[OT1]{fontenc}
\RequirePackage{amsthm,amsmath}
\RequirePackage[numbers]{natbib}
\RequirePackage[colorlinks,citecolor=blue,urlcolor=blue]{hyperref}

\usepackage{amssymb}
\usepackage{graphicx}
\usepackage{color}
\usepackage{tikz}
\usetikzlibrary{decorations.markings}
\usetikzlibrary{decorations.pathmorphing}
\usetikzlibrary{arrows, snakes}
\usetikzlibrary{shapes.geometric, calc,decorations.pathreplacing}
\usetikzlibrary{backgrounds}
\usetikzlibrary{fit}
\usetikzlibrary{decorations.pathreplacing}
\input xy
\xyoption{all}

\tikzset{middlearrow/.style={
		decoration={markings,
			mark= at position 0.5 with {\arrow{#1}} ,
		},
		postaction={decorate}
	}
}

\theoremstyle{plain}
\newtheorem{theorem}{Theorem}[section]

\newtheorem{corollary}[theorem]{Corollary}

\newtheorem{lemma}[theorem]{Lemma}

\newtheorem{proposition}[theorem]{Proposition}

\newtheorem{definition}[theorem]{Definition}

\newtheorem{assumption}[theorem]{Assumption}
\theoremstyle{remark}
\newtheorem{remark}[theorem]{Remark}
\newtheorem{example}[theorem]{Example}

\numberwithin{equation}{section}

\newcommand{\PP}{\mathbb{P}}
\newcommand\E{\mathbb{E}}

\arxiv{arXiv:1910.13668}

\startlocaldefs
\numberwithin{equation}{section}
\theoremstyle{plain}

\endlocaldefs

\begin{document}

\begin{frontmatter}
\title{Random concave functions} 
\runtitle{Random concave functions}

\begin{aug}
\author{\fnms{Peter} \snm{Baxendale}
	\ead[label=e1]{baxendal@usc.edu}}
\and
\author{\fnms{Ting-Kam Leonard} \snm{Wong}
\ead[label=e3]{tkl.wong@utoronto.ca}
}

\runauthor{P.~Baxendale and T.-K.~L.~Wong}

\affiliation{University of Southern California and University of Toronto}

\address{Department of Mathematics\\
University of Southern California\\
Los Angeles, CA 90089\\
United States\\
\printead{e1}\\
}

\address{Department of Statistical Sciences\\
University of Toronto\\
Toronto, ON M5S 3G3\\
Canada\\
\printead{e3}\\
}
\end{aug}

\begin{abstract}
Spaces of convex and concave functions appear naturally in theory and applications. For example, convex regression and log-concave density estimation are important topics in nonparametric statistics. In stochastic portfolio theory, concave functions on the unit simplex measure the concentration of capital, and their gradient maps define novel investment strategies. The gradient maps may also be regarded as optimal transport maps on the simplex. In this paper we construct and study probability measures supported on spaces of concave functions. These measures may serve as prior distributions in Bayesian statistics and Cover's universal portfolio, and induce distribution-valued random variables via optimal transport. The random concave functions are constructed on the unit simplex by taking a suitably scaled (mollified, or soft) minimum of random hyperplanes. Depending on the regime of the parameters, we show that as the number of hyperplanes tends to infinity there are several possible limiting behaviors. In particular, there is a transition from a deterministic almost sure limit to a non-trivial limiting distribution that can be characterized using convex duality and Poisson point processes.
\end{abstract}

\begin{keyword}[class=MSC]
\kwd[Primary ]{60D05}
\kwd{60G55}
\kwd[; secondary ]{62P05}
\kwd{62G99}
\end{keyword}

\begin{keyword}
\kwd{concave function}
\kwd{extreme value theory}
\kwd{Poisson point process}
\kwd{stochastic portfolio theory}
\kwd{universal portfolio}
\kwd{optimal transport}
\end{keyword}

\end{frontmatter}

\section{Introduction} \label{sec:intro}

\subsection{Motivations} \label{sec:motivations}
In this paper we study probability measures on spaces of concave functions. We first describe some applications that motivated our study. In the first two applications there is an infinite-dimensional parameter space consisting of convex or concave functions, and the problem is to find mathematically tractable prior distributions on the space.

\subsubsection{Nonparametric Bayesian statistics}
Consider a nonlinear regression problem where data is drawn according to the model
\begin{equation} \label{eqn:regression}
Y_i = f(X_i) + \epsilon_i.
\end{equation}
In many applications the regression function $f$ is known to satisfy certain shape constraints such as monotonicity or convexity/concavity. Without assuming further structures on $f$, this problem is nonparametric as $f$ is an element of an infinite dimensional function space. See \cite{JD18} for various shape constraints in economics and operations research. Although the shape-constrained estimation problem can be studied by various methods (see for example \cite{SS11, HD11, HD13} in the references therein), it is both important and interesting to consider the Bayesian approach. To do this we need suitable prior distributions for the convex function $f$. In \cite{HD11} Hannah and Dunson proposed to generate random convex functions on $\mathbb{R}^n$ by taking the maximum of a (random) number of random hyperplanes, and established rates of convergence of the Bayes estimator. While in \cite{HD11} the main concern is the support and concentration properties of the prior, we will establish concrete results about the limiting distributions as the number of hyperplanes tends to infinity.

%
%

Another important class of shape-constrained inference problems is density estimation. A classic example, studied in \cite{DR09, CSS10} among many other papers, is log-concave density estimation. Here we observe data $X_1, \ldots, X_N$ with values in $\mathbb{R}^n$, where
\[
X_i \stackrel{\text{i.i.d.}}{\sim} f
\]
and $f$ is a log-concave density, i.e., $\log f$ is concave. For example, the densities of the normal and gamma distributions are log-concave. Again, to use the Bayesian approach we need to introduce suitable prior distributions on the space of concave functions. So far there is little work on this topic except the one dimensional case \cite{MRS17}. For recent progress in log-concavity in general and density estimation we refer the reader to \cite{S17, SW14}.

In nonparametric Bayesian statistics a very useful class of prior distributions is the Dirichlet process introduced by Ferguson \cite{F73}. Realizations of the Dirichlet process are random discrete probability measures on a given state space. In one dimension, the Dirichlet process can be used to enforce shape constraints. For example, a convex function on an interval has a non-decreasing first derivative which can be identified with the distribution function of a measure. However, similar arguments do not extend immediately to multi-dimensions as the second derivative of a convex function, if exists, is matrix-valued.\footnote{The subgradient (as a set-valued mapping) satisfies a condition known as cyclical monotonicity; see \cite[Section 24]{R70}.} See Section \ref{sec:optimal.transport} for more discussion involving ideas from optimal transport. 

\subsubsection{Stochastic portfolio theory and Cover's universal portfolio} \label{sec:spt}
Consider the open unit simplex in $\mathbb{R}^n$, $n\geq 2$, defined by
\begin{equation} \label{eqn:simplex.open} 
\Delta_n := \{p \in (0, 1)^n : p_1 + \cdots + p_n = 1\}.
\end{equation}
We let $\overline{\Delta}_n$ be its closure in $\mathbb{R}^n$. Let $e_1, \ldots, e_n$ be the standard Euclidean basis of $\mathbb{R}^n$ which represents the vertices of the simplex. We denote by $\overline{e} := \left( \frac{1}{n}, \ldots, \frac{1}{n}\right)$ the barycenter of the simplex.

In stochastic portfolio theory (see \cite{F02, KF09} for introductions) the open simplex $\Delta_n$ represents the state space of an equity market with $n$ stocks. If $X_i(t) > 0$ denotes the market capitalization of stock $i$ at time $t$, we call
\begin{equation} \label{eqn:market.weight}
m_i(t) = \frac{X_i(t)}{X_1(t) + \cdots + X_n(t)}
\end{equation}
the market weight of stock $i$. The vector $m(t) = (m_i(t))_{1 \leq i \leq n}$ then defines a process evolving in the simplex $\Delta_n$. Let $\Phi : \Delta_n \rightarrow (0, \infty)$ be a positive concave function on $\Delta_n$. In this context the function $\Phi$ plays two related roles. First, $\Phi$ can be regarded as a generalized measure of diversity (analogous to the Shannon entropy) which quantifies the concentration of capital in the market \cite[Chapter 3]{F02}. Second, the concave function $\Phi$ can be used to define an investment strategy, called functionally generated portfolio, with remarkable properties. Here is how the strategy is defined when $\Phi$ is differentiable. If the market weight is $m(t) = p \in \Delta_n$ at time $t$, invest $100 \pi_i \%$ of the current capital in stock $i$, where
\begin{equation} \label{eqn:fgp}
\pi_i = p_i (1 + D_{e_i - p} \log \Phi(p)),
\end{equation}
and $D_{e_i - p}$ is the directional derivative. We call the mapping $p \mapsto \boldsymbol{\pi}(p) = \pi \in \overline{\Delta}_n$ the portfolio map generated by $\Phi$.\footnote{We may also use the concave function $\Phi$ to define portfolios generated by ranked weights; see \cite[Chapter 4]{F02}.} To give an example which will be relevant later, the geometric mean $\Phi(p) = p_1^{\pi_1} \cdots p_n^{\pi_n}$, where $\pi = (\pi_1, \ldots, \pi_n) \in \overline{\Delta}_n$ is fixed, generates the constant-weighted portfolio $\boldsymbol{\pi}(p) \equiv \pi$ \cite[Example 3.1.6]{F02}. As shown in \cite{F02, KF09, PW15, W19}, the concavity of $\Phi$ allows the portfolio to diversify and capture volatility of the market through systematic rebalancing.

In the seminal paper \cite{C91} Cover constructed what is now called an online investment algorithm by forming a Bayesian average over the constant-weighted portfolios. The main idea is that strategies which have been performing well receive additional weights that are computed using an algorithm analogous to Bayes's theorem (where the portfolio value plays the role of the likelihood). To start the algorithm one needs an initial (i.e., prior) distribution on the space of portfolio strategies.  In a nonprobabilistic framework it can be shown that Cover's universal portfolio tracks asymptotically the best strategy in the given (finite-dimensional) family, in the sense that the average regret (measured by the growth rate) with respect to the best strategy tends to zero as the number of time steps tends to infinity. In \cite{CSW16, wong2015universal} the second author and his collaborators extended Cover's approach to the nonparametric family of functionally generated portfolios. Nevertheless, for practical applications and to obtain quantitative estimates we need tractable prior distributions for the generating function $\Phi$. The distributions constructed in this paper may serve as a building block of the prior. Also see \cite{IL20} for a study of minimums of hyperplanes in the context of robust portfolio optimization in stochastic portfolio theory.


\subsubsection{Optimal transport} \label{sec:optimal.transport}
Convex and concave functions are also interesting from the viewpoint of optimal transport (see \cite{V03, V08} for in-depth overviews). Given a cost function $c : \mathcal{X} \times \mathcal{Y} \rightarrow \mathbb{R}$ and probability measures $P$ on $\mathcal{X}$ and $Q$ on $\mathcal{Y}$, the Monge-Kantorovich problem is the minimization of the transport cost
\[
\int_{\mathcal{X} \times \mathcal{Y}} c(x, y) dR(x, y)
\]
over all couplings $R$ of $(P, Q)$. When $\mathcal{X} = \mathcal{Y} = \mathbb{R}^n$ and $c(x, y) = |x - y|^2$ is the squared Euclidean distance, Brenier's theorem \cite{B91} asserts that there is a deterministic optimal transport map of the form
\begin{equation} \label{eqn:Brenier.theorem}
y = \nabla \phi(x),
\end{equation}
where $\phi$ is a convex function (this holds, for example, when $P$ and $Q$ have finite second moments and $P$ is absolutely continuous). Conversely, given $P$ fixed (e.g.~standard normal) and a convex function $\phi$, the transport map \eqref{eqn:Brenier.theorem} is optimal with respect to $P$ and the pushforward $Q = (\nabla \phi)_{\#} P$. Thus a probability distribution over $Q$ (i.e., an element of $\mathcal{P}(\mathcal{P}(\mathbb{R}^n))$ can be used to define a distribution over the space of convex functions on $\mathbb{R}^n$.

In a series of papers \cite{PW18, PW15, PW16} Pal and the second author studied a novel optimal transport problem, that we call the Dirichlet transport, on the unit simplex $\Delta_n$.\footnote{As shown in \cite{PW18, W18, WY19}, this transport problem also has remarkable properties from the information geometric point of view.} The cost function is given by
\[
c(p, q) = \log \left( \frac{1}{n} \sum_{i = 1}^n \frac{q_i}{p_i} \right) - \sum_{i = 1}^n \frac{1}{n} \log \frac{q_i}{p_i}, \quad p, q \in \Delta_n.
\]
For this cost function we proved an analogue of Brenier's theorem in \cite[Theorem 4]{PW18}: under natural conditions on $P$ and $Q$, there exists a positive concave function $\Phi$ on $\Delta_n$ such that the optimal transport map is given by
\begin{equation} \label{eqn:L.optimal.transport.map}
q = p \odot \boldsymbol{\pi} (p^{-1}), \quad p \in \Delta_n,
\end{equation}
where $\boldsymbol{\pi}$ is the portfolio map generated by $\Phi$ in the sense of \eqref{eqn:fgp},
\[
p^{-1} = \left( \frac{1/p_i}{\sum_j 1/p_j} \right)_{1 \leq i \leq n} \quad\text{and}\quad a \odot b = \left(\frac{a_ib_i}{\sum_j a_j b_j} \right)_{1 \leq i \leq n}.
\]
When $P = Q$, the identity transport $q = p$ is realized by the geometric mean $\Phi(p) = (p_1 \cdots p_n)^{1/n}$. The weighted geometric mean $\Phi(p) = p_1^{\pi_1} \cdots p_n^{\pi_n}$ corresponds to a deterministic translation under an exponential coordinate system \cite[Proposition 2.7(iii)]{PW16}. 

It follows that an element of $\mathcal{P}(\mathcal{P}(\Delta_n))$ induces a probability distribution over positive concave functions on $\Delta_n$. Measures over spaces of probability distributions are important in optimal transport, information geometry and statistics. For example, in \cite{vRS09} von Renesse and Sturm constructed an entropic measure on the Wasserstein space and defined a Wasserstein diffusion. In principle, one can use Dirichlet processes on $\Delta_n$ to define random concave functions via the Dirichlet transport problem. Further properties of this construction are left for future research. 

\begin{remark}
	Since a convex function can be identified with its epigraph, the results of this paper can be formulated in terms of random convex sets in $\mathbb{R}^{n}$ such that part of the boundary is fixed to be the unit simplex. While random convex sets (given for example by the convex hulls of random points) have been studied extensively in the literature (see for example \cite{MM05, SW08, KMTT19} and the references therein), the motivations and questions studied in this paper are quite different.
\end{remark}

\subsection{Summary of the paper}
Motivated by the applications described in Sections \ref{sec:spt} and \ref{sec:optimal.transport}, in this paper we focus on random non-negative concave functions on the unit simplex $\overline{\Delta}_n$. Thus we let
\begin{equation} \label{eqn:family.C}
\mathcal{C} := \{\psi: \overline{\Delta}_n \rightarrow [0, \infty) \text{ continuous and concave}\},
\end{equation}
and our aim is to construct and study probability measures on $\mathcal{C}$. We equip $\mathcal{C}$ with the topology of uniform convergence on compact subsets of $\Delta_n$ and the associated Borel $\sigma$-algebra. Properties of $\mathcal{C}$ are given in Section \ref{sec:notations}.

\begin{figure}[t]
	\begin{tikzpicture}[scale = 0.5]
	\draw (0, 0) to (5, 1);
	\draw (5, 1) to (10, -1);
	\draw (10, -1) to (0, 0);
	
	\draw[dashed] (0, 0) to (0, 7);
	\draw[dashed] (5, 1) to (5, 8);
	\draw[dashed] (10, -1) to (10, 5.5);
	
	\draw [draw=gray, fill=gray, opacity=0.25] (0,2) -- (5,7) -- (10,3) -- cycle;
	\draw [draw=gray, fill=gray, opacity=0.25] (0,5) -- (5,5) -- (10,2) -- cycle;
	
	\draw [draw=blue!50, fill=blue!50, opacity=0.4] (0,2) -- (3,5) -- (7.6, 2.73) -- cycle;
	\draw [draw=blue!50, fill=blue!50, opacity=0.6] (3, 5) -- (5,5) -- (10,2) -- (7.6, 2.73) -- cycle;
	96
	\draw[blue] (0, 2) -- (3, 5);
	\draw[blue] (3, 5) -- (5, 5);
	\draw[blue] (3, 5) -- (7.6, 2.73);
	\draw[blue] (0, 2) -- (7.6, 2.73);
	\draw[blue] (10, 2) --(7.6, 2.73);
	\draw[blue] (10, 2) -- (5, 5);
	
	\node [black, left] at (0, 5) {{\footnotesize $C_1$}};
	\node [black, right] at (10, 2) {{\footnotesize $C_2$}};
	\node [black, right] at (5, 5.1) {{\footnotesize $C_3$}};
	
	\end{tikzpicture}
	\caption{A random non-negative concave function on $\overline{\Delta}_n$ given as the minimum of several hyperplanes. Here $C = (C_1, \ldots, C_n)$ is a random vector which determines the coefficients of the hyperplane.} \label{fig:planes}
\end{figure}
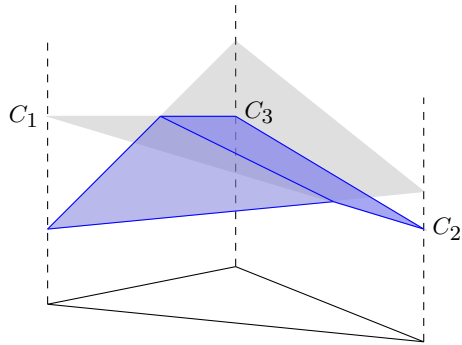

We study a natural probabilistic model for generating random concave functions. Namely, they are given by suitably scaled minimums of i.i.d.~random hyperplanes (see Figure \ref{fig:planes}). More generally, we also consider a soft minimum $\mathsf{m}_{\lambda}$ where $\lambda \in (0, \infty]$ is an inverse smoothness parameter and $\lim_{\lambda \rightarrow \infty} \mathsf{m}_{\lambda} = \mathsf{m}_{\infty} = \min$ (see Definition \ref{def:soft.min}). Thus, given a positive integer $K$, the number of hyperplanes, we consider the random concave function given by
\begin{equation} \label{eqn:our.model}
\Psi_K = a_K \mathsf{m}_{\lambda_K}(\ell_1, \ldots, \ell_K),
\end{equation}
where $a_K > 0$ is a scaling constant, $\lambda_K \in (0, \infty]$, and $\ell_1, \ldots, \ell_K$ are i.i.d.~random hyperplanes. This model is rigorously defined in Section \ref{sec:model}. Our main objective in this paper is to study the limiting behavior of the distribution of $\Psi_K$ as the number of hyperplanes tends to infinity.

In Section \ref{sec:as.limit} we consider the softmin case $\Psi_K = \mathsf{m}_{\lambda}(\ell_1, \ldots, \ell_K)$, where $\lambda \in (0, \infty)$ is finite and fixed. We show that there exists a deterministic concave function $\Psi_{\infty}$, given in terms of the distribution of $\ell_k$, such that $\Psi_K \rightarrow \Psi_{\infty}$ almost surely.

Section \ref{sec:limit.lambda.inf} studies the case of hardmin, i.e., $\lambda_K \equiv \infty$ for all $K$. Under suitable conditions on the distribution of the hyperplanes $\ell_k$, we show that the distribution of $\Phi_K$ converges weakly to a non-trivial limit $\mu$ as $K \rightarrow \infty$. Moreoever, for some constant $\alpha > 0$, if $\Psi \sim \mu$ then 
the distribution of $\Psi(p)^{n + \alpha}$ for each fixed $p \in \Delta_n$ is exponential (see Proposition \ref{prop:finite.dimensional}). This can be regarded as a functional limit theorem in the context of extreme value theory. Pointwise extremas of i.i.d.~stochastic processes have been studied extensively in the literature (see for example \cite{LLR82, R87, DF07} and the references therein). Here, the novelty of our results lies in the shape constraint and an elegant convex duality for concave functions on the unit simplex which allows us to study the limiting distribution in detail. See Remarks \ref{rmk:EVT1}  and \ref{rmk:EVT2} for more discussions about connections with extreme value theory.

Various properties of this limiting distribution $\mu$ are established, under additional conditions, in Section \ref{sec:further.properties}. In particular, we show that the geometric mean, which plays a special role in stochastic portfolio theory and the Dirichlet transport, arises as the expected value of the limiting random concave function. Using differential geometric methods, we also give an interesting explicit formula for the tail probability $\mathbb{P}(\Psi \geq \psi)$ for a given $\psi \in \mathcal{C}$.

Finally, in Section \ref{sec:diagonal}, we consider the mathematically more challenging case where the smoothness parameter $\lambda_K$ depends on $K$. We identify regimes which give different limiting behaviors. Our analysis involves studying laws of large numbers for soft minimums of i.i.d.~random variables, again related to Poisson point processes, which may be of independent interest.

In this paper we studied some probabilistic properties of random concave functions defined by the model \eqref{eqn:our.model}. To address the applications described in Section \ref{sec:motivations} we need to develop efficient computational methods; the model \eqref{eqn:our.model} may also need to be modified to suit the specific needs. We plan to study these questions in future research.

\section{Concave functions on the simplex} \label{sec:model}
\subsection{Preliminaries} \label{sec:notations}
As noted in Section \ref{sec:intro} we will focus on the space $\mathcal{C}$, defined by \eqref{eqn:family.C}, consisting of non-negative continuous concave functions on $\overline{\Delta}_n$. We also let
\begin{equation*}
\mathcal{C}_+ := \{ \psi \in \mathcal{C} : \psi > 0 \text{ on } \Delta_n \}
\end{equation*}
be those functions in $\mathcal{C}$ that are strictly positive in the (relative) interior.

Our choice of using the simplex as the domain has the following mathematical advantages apart from the motivations described above. First, the simplex $\overline{\Delta}_n$ is a symmetric polyhedron, and in this case the duality of concave function takes a special form which is useful for our analysis. Second, if we specify a finite number of points $p^{(i)} \in \Delta_n$ and constants $r^{(i)} > 0$, the smallest function $\psi \in \mathcal{C}$ such that $\psi(p^{(i)}) \geq r^{(i)}$ for all $i$ is polyhedral, i.e., it is the minimum of a finite collection of hyperplanes. This is not the case if the boundary is smooth. Last but not least, the duality allows us to connect the limiting distributions of our model with Poisson point processes on the positive quadrant. While it may be possible to extend some results to general convex domains, we believe the unit simplex is of special interest.

Functions in $\mathcal{C}$ enjoy strong analytical properties (we refer the reader to \cite{R70} for standard results in convex analysis). For example, if $\psi \in \mathcal{C}$, then $\psi$ is locally Lipschitz on $\Delta_n$. Moreover, the superdifferential
\[
\partial \psi(p) := \{\xi \in \mathbb{R}^n: \xi_1 + \cdots + \xi_n = 0, \ \psi(p) + \langle \xi, q - p \rangle \geq \psi(q) \text{ for all } q \in \overline{\Delta}_n \}
\]
is non-empty, convex and compact for every $p \in \Delta_n$; moreover $\psi$ is differentiable (i.e., the superdifferential $\partial \psi(p)$ reduces to a singleton) Lebesgue almost everywhere on $\Delta_n$. By Aleksandrov's theorem (see e.g.~\cite[Theorem 6.9]{EG15}) even the Hessian can be defined almost everywhere, but this result is not needed in this paper.

We equip the space $\mathcal{C}$ with the topology of local uniform convergence. By definition, a sequence $\{\psi_k\}$ converges to $\psi$ in $\mathcal{C}$ if and only if for any compact subset $\Omega$ of $\Delta_n$ we have $\psi_k \rightarrow \psi$ uniformly on $\Omega$. A metric which is compatible with this topology is
\begin{equation} \label{eqn:C.metric}
d(\varphi, \psi) := \sum_{k = 1}^{\infty} 2^{-k} \left(\sup_{p \in \Delta_{n, k}} | \varphi(p) - \psi(p) | \wedge 1 \right),
\end{equation}
where $ \Delta_{n, k} = \{p \in \Delta_n : p_i \geq 1/k, i = 1, \ldots, n\}$ is compact in $\Delta_n$. Note that by \cite[Theorem 10.3]{R70}, any non-negative concave function on $\Delta_n$ has a unique continuous extension to $\overline{\Delta}_n$. This implies that if $\varphi, \psi \in \mathcal{C}$ and $d(\varphi, \psi) = 0$ then $\varphi \equiv \psi$ on $\overline{\Delta}_n$; thus the metric is well-defined on $\mathcal{C}$ even though the boundary is not explicitly included in \eqref{eqn:C.metric}. It is easy to verify that $(\mathcal{C}, d)$ is complete and separable.  The following lemma is standard and a proof (which uses \cite[Theorem 10.6]{R70}) can be found in \cite{W15}.

\begin{lemma} \label{lem:compact}
	For any $M \geq 0$ the set $\{\psi \in \mathcal{C}: \psi \leq M\}$ is compact in $\mathcal{C}$.
\end{lemma}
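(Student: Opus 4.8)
The plan is to prove this via the Arzelà–Ascoli theorem, exploiting the fact that a nonnegative concave function bounded above by $M$ on $\overline{\Delta}_n$ is automatically equi-Lipschitz on each compact subset $\Delta_{n,k}$ of $\Delta_n$. First I would take a sequence $\{\psi_j\}$ in the set $\{\psi \in \mathcal{C} : \psi \leq M\}$ and aim to extract a subsequence converging in $(\mathcal{C}, d)$ to some $\psi$ with $0 \leq \psi \leq M$. The key geometric observation, from \cite[Theorem 10.6]{R70} (or an elementary argument), is that if $\psi$ is concave on $\overline{\Delta}_n$ with $0 \leq \psi \leq M$, then on $\Delta_{n,k}$ the function $\psi$ is Lipschitz with a constant $L_k$ depending only on $M$, $k$, and $n$ — indeed, for $p \in \Delta_{n,k}$ one can move a definite distance in any feasible direction before hitting the boundary, and the slope of a concave function in such a direction is controlled by $M$ divided by that distance. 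Hence $\{\psi_j\}$ restricted to $\Delta_{n,k}$ is uniformly bounded (by $M$) and equi-Lipschitz (by $L_k$), so Arzelà–Ascoli applies on each $\Delta_{n,k}$.

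Next I would run a diagonal argument over $k = 1, 2, \ldots$: extract a subsequence converging uniformly on $\Delta_{n,1}$, then a further subsequence converging uniformly on $\Delta_{n,2}$, and so on, finally taking the diagonal subsequence, which converges uniformly on every $\Delta_{n,k}$ and hence (by definition of the metric $d$ in \eqref{eqn:C.metric}) converges in $(\mathcal{C},d)$ to a limit function $\psi$ defined on $\Delta_n = \bigcup_k \Delta_{n,k}$. Then I would check that $\psi$ lies in the set: concavity and the bound $0 \leq \psi \leq M$ pass to pointwise (hence locally uniform) limits, and by \cite[Theorem 10.3]{R70} the nonnegative concave function $\psi$ on $\Delta_n$ extends uniquely and continuously to $\overline{\Delta}_n$, so $\psi \in \mathcal{C}$ with $\psi \leq M$. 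This shows sequential compactness; since $(\mathcal{C}, d)$ is a metric space, sequential compactness is equivalent to compactness, finishing the proof.

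The main obstacle — really the only nontrivial point — is establishing the equi-Lipschitz bound on $\Delta_{n,k}$ uniformly over the family, i.e., converting the global bound $\psi \leq M$ into a local gradient bound that does not depend on the particular $\psi$. This is exactly where the geometry of the simplex and the nonnegativity constraint enter: nonnegativity provides a \emph{lower} barrier ($\psi \geq 0$) and the upper bound $M$ a ceiling, and concavity forces the graph between any interior point and the boundary to stay within this slab, pinning the directional slopes. Once this is in hand, the rest is the routine Arzelà–Ascoli-plus-diagonalization machinery together with the two cited facts from \cite{R70} about concave functions on the simplex; as the excerpt notes, the details can be found in \cite{W15}.
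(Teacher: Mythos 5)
Your proposal is correct and follows essentially the same route as the paper's (cited) proof: the paper defers to \cite{W15} and notes the argument rests on \cite[Theorem 10.6]{R70}, which is precisely the uniform-boundedness-plus-equi-Lipschitz fact you identify as the crux, after which Arzel\`a--Ascoli, diagonalization over the exhaustion $\Delta_{n,k}$, and closedness of the set under local uniform limits finish the job.
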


Let $\mathcal{B} = \mathcal{B}(\mathcal{C})$ be the Borel $\sigma$-field generated by this topology. In this paper we are interested in probabilistic models for generating random elements of $\mathcal{C}$, or equivalently  probability measures on $(\mathcal{C}, \mathcal{B})$. It is easy to see that $\mathcal{B}$ is generated by the collection of finite-dimensional cylinder sets. This implies the following lemma.

\begin{lemma} \label{lem:unique}
	Let $\nu$ and $\widetilde{\nu}$ be probability measures on $\mathcal{C}$. If they have the same finite-dimensional distributions, then $\nu = \widetilde{\nu}$.
\end{lemma}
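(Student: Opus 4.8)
The plan is the standard Dynkin $\pi$--$\lambda$ argument, leaning on the sentence preceding the lemma that $\mathcal{B}$ is generated by the finite-dimensional cylinder sets. First I would fix notation: for a finite tuple $\mathbf{p} = (p^{(1)}, \dots, p^{(m)})$ of points of $\Delta_n$, let $\mathrm{ev}_{\mathbf{p}} \colon \mathcal{C} \to \mathbb{R}^m$ be the evaluation map $\psi \mapsto (\psi(p^{(1)}), \dots, \psi(p^{(m)}))$. Since local uniform convergence implies pointwise convergence on $\Delta_n$, the map $\mathrm{ev}_{\mathbf{p}}$ is continuous, hence $\mathcal{B}$-measurable; a finite-dimensional cylinder set is a set of the form $\mathrm{ev}_{\mathbf{p}}^{-1}(B)$ with $B \in \mathcal{B}(\mathbb{R}^m)$, and the hypothesis that $\nu$ and $\widetilde{\nu}$ have the same finite-dimensional distributions says exactly that $(\mathrm{ev}_{\mathbf{p}})_{\#} \nu = (\mathrm{ev}_{\mathbf{p}})_{\#} \widetilde{\nu}$ for every such $\mathbf{p}$, i.e., $\nu$ and $\widetilde{\nu}$ agree on every finite-dimensional cylinder set.

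Next I would observe that the collection $\mathcal{A}$ of finite-dimensional cylinder sets is a $\pi$-system containing $\mathcal{C}$: the intersection of a cylinder over $\mathbf{p}$ with a cylinder over $\mathbf{q}$ is again a cylinder over the concatenated tuple $(\mathbf{p}, \mathbf{q})$ (pull both Borel sets back to the coordinates indexed by $(\mathbf{p},\mathbf{q})$ and intersect). The family $\mathcal{L} := \{A \in \mathcal{B} : \nu(A) = \widetilde{\nu}(A)\}$ is a $\lambda$-system (it contains $\mathcal{C}$ and is closed under proper differences and increasing countable unions, both probability measures being finite) and contains $\mathcal{A}$ by the previous paragraph, so Dynkin's $\pi$--$\lambda$ theorem gives $\sigma(\mathcal{A}) \subseteq \mathcal{L}$. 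Since $\sigma(\mathcal{A}) = \mathcal{B}$ by the quoted fact, $\mathcal{L} = \mathcal{B}$, which is precisely the assertion $\nu = \widetilde{\nu}$.

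For a self-contained account one can also re-derive $\sigma(\mathcal{A}) = \mathcal{B}$: the inclusion $\sigma(\mathcal{A}) \subseteq \mathcal{B}$ is just the continuity of the maps $\mathrm{ev}_{\mathbf{p}}$, and for the reverse inclusion, fixing $\varphi \in \mathcal{C}$ and replacing each supremum in the metric \eqref{eqn:C.metric} by a supremum over a fixed countable dense subset of $\Delta_{n,k}$---legitimate because elements of $\mathcal{C}$ are continuous on $\Delta_n$---exhibits $\psi \mapsto d(\varphi, \psi)$ as a countable combination of $\sigma(\mathcal{A})$-measurable maps, hence $\sigma(\mathcal{A})$-measurable; therefore every open $d$-ball lies in $\sigma(\mathcal{A})$, and by separability of $(\mathcal{C}, d)$ so does every open set. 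There is essentially no obstacle in this lemma; the only point requiring a little care is the measurability bookkeeping of this last paragraph, which is routine given the separability of $(\mathcal{C}, d)$ and the continuity of concave functions recorded in Section~\ref{sec:notations}.
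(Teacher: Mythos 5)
Your proof is correct and follows exactly the route the paper intends: the paper simply asserts that $\mathcal{B}$ is generated by the finite-dimensional cylinder sets and that the lemma follows, and your $\pi$--$\lambda$ argument (plus the verification that the metric $d(\varphi,\cdot)$ is measurable with respect to the cylinder $\sigma$-algebra via countable dense subsets of the $\Delta_{n,k}$) is the standard filling-in of those details. No gaps.
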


\begin{remark}
	Apart from the topology of uniform convergence over compact subsets of $\Delta_n$ as in \eqref{eqn:C.metric}, one may consider, for example, the topology of uniform convergence on $\overline{\Delta}_n$. We argue that our choice is more natural, and the main reason is that convergence theorems in convex analysis (such as \cite[Theorem 10.9]{R70}) are usually formulated in the topology of local uniform convergence. To give a concrete example, consider on $[0, 1]$ the sequence $\{\psi_k\}_{k \geq 2}$ of concave functions given by
	\[
	\psi_k(x) = \left\{\begin{array}{ll}
	kx, & \text{for } 0\leq x\leq \frac{1}{k};\\
	1, & \text{for } \frac{1}{k} \leq x \leq 1 - \frac{1}{k};\\
	k(1 - x), & \text{for } \leq 1 - \frac{1}{k} \leq x \leq 1.
	\end{array}\right.
	\]
	Then $\psi_k$ converges with respect to metric $d$, but not uniformly, to the constant function $\psi(x) \equiv 1$.
\end{remark}

Let $\mathcal{A}_+$ denote the set
\[
\mathcal{A}_+ := \{\ell : \overline{\Delta}_n \rightarrow (0, \infty) \text{ affine}\}
\]
consisting of (strictly) positive affine functions on $\overline{\Delta}_n$. Clearly $\mathcal{A}_+ \subset \mathcal{C}_+ \subset \mathcal{C}$. Note that every element of $\mathcal{A}_+$ can be written in the form
\begin{equation} \label{eqn:affine}
\ell(p) = \sum_{i = 1}^n p_i x_i =: \langle p, x \rangle,
\end{equation}
for some positive constants $x_1, \ldots, x_n > 0$, where $x_i = \ell(e_i)$ is the value of $\ell$ at the vertex $e_i$. Thus we may identify $\mathcal{A}_+$ with the positive quadrant $\mathbb{R}_{+}^n := (0, \infty)^n$. By concavity, for any $\psi \in \mathcal{C}$ we have
\begin{equation} \label{eqn:concave.duality}
\psi = \inf\{\ell \in \mathcal{A}_+ : \ell \geq \psi\}.
\end{equation}
Since every element of $\mathcal{C}$ can be written as the infimum of a collection of hyperplanes, to generate a random concave function in $\mathcal{C}$ it suffices to generate a random collection of hyperplanes in $\mathcal{A}_+$.

\medskip

While every concave function in $\mathcal{C}$ can be generated in the form \eqref{eqn:concave.duality}, in applications (e.g.~in stochastic portfolio theory) it may be desirable to use a smooth approximation of the minimum operation, so that each realization is itself smooth (when the number of planes is finite). For this reason we introduce the softmin which is often used in convex optimization and machine learning. The smoothness parameter also adds an extra dimension to the mathematical analysis.

\begin{definition}[Softmin] \label{def:soft.min}
	Let $\lambda > 0$. For $K \geq 1$ and $x_1, \ldots, x_K \in \mathbb{R}$ we define the softmin (with parameter $\lambda$) by
	\begin{equation} \label{eqn:softmin}
	\mathsf{m}_{\lambda}(x_1, \ldots, x_K) := \frac{-1}{\lambda} \log \left( \frac{1}{K} \sum_{k = 1}^K e^{-\lambda x_k} \right).
	\end{equation}
	By continuity, we define
	\[
	\mathsf{m}_{\infty}(x_1, \ldots, x_K) := \min\{x_1, \ldots, x_K\}
	\]
	and call this the hardmin (see Figure \ref{fig:softmin} for an illustration). We also write $\mathsf{m}_{\lambda}(x_1, \ldots, x_K) = \mathsf{m}_{\lambda}\{x_k\}$ when the context is clear.
\end{definition}

\begin{figure}[t]
	\centering
	\includegraphics[scale=0.45]{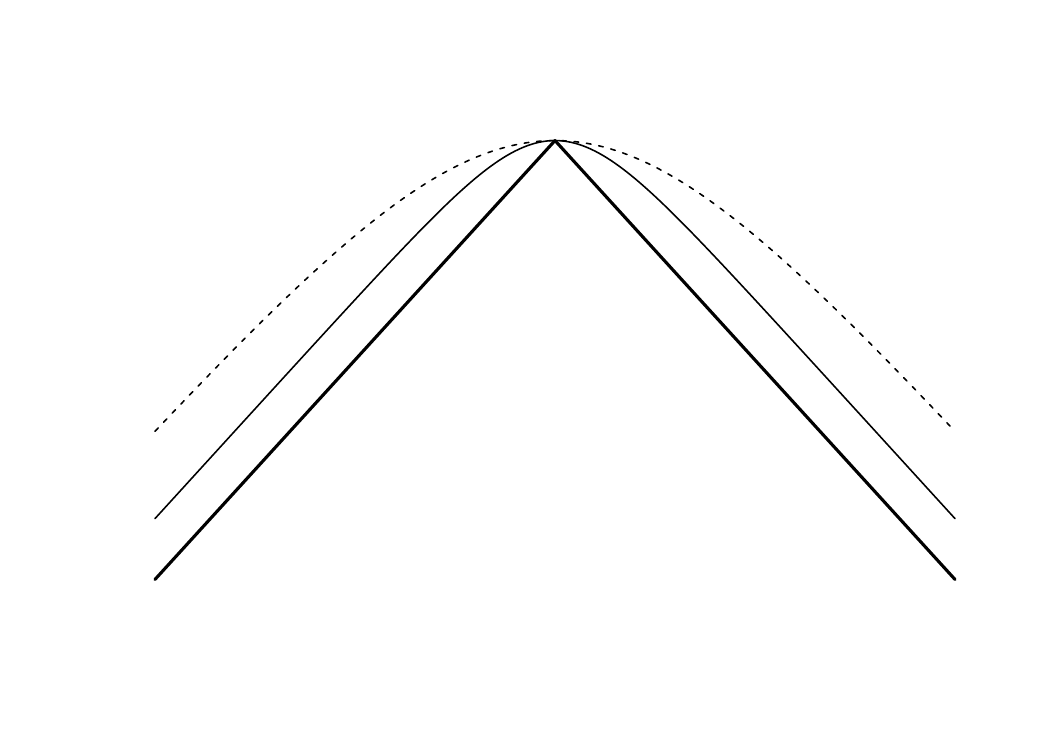}
	\vspace{-1cm}
	\caption{Graphs of $\mathsf{m}_{\lambda}(x, 2 - x)$ on $[0, 2]$ for $\lambda = 2$ (dashed), $\lambda = 5$ (thin solid) and $\lambda = \infty$ (thick solid).} \label{fig:softmin}
\end{figure}

\begin{lemma}[Properties of softmin] \label{lem:softmin} Let $\lambda > 0$ be fixed.
	\begin{itemize}
		\item[(i)] For $x_1, \ldots, x_K \in \mathbb{R}$ we have
		\begin{equation} \label{eqn:softmin.bound}
		\min\{x_1, \ldots, x_K\} \leq \mathsf{m}_{\lambda}(x_1, \ldots, x_K) \leq \min\{x_1, \ldots, x_K\} + \frac{1}{\lambda} \log K.
		\end{equation}
		Also, for any $x \in \mathbb{R}$ we have $\mathsf{m}_{\lambda}(x, \ldots, x) = x$ and
		\[
		\mathsf{m}_{\lambda}(x_1 + c, \ldots, x_K + c) = \mathsf{m}_{\lambda}(x_1, \ldots, x_K) + c, \quad c \in \mathbb{R}.
		\]
		\item[(ii)] For $K \geq 1$ fixed, the softmin $\mathsf{m}_{\lambda}$ is a smooth and symmetric concave function of $x_1, \ldots, x_K$.
		\item[(iii)] If $\Phi^{(1)}, \ldots, \Phi^{(K)}$ are finite concave functions defined on a convex set, then so is
		\begin{equation} \label{eqn:softmin.concave}
		\Phi = \mathsf{m}_{\lambda}(\Phi^{(1)}, \ldots, \Phi^{(K)}).
		\end{equation}
	\end{itemize}
\end{lemma}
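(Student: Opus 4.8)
The plan is to handle the three parts in order, each by an elementary direct computation. For part (i), set $m := \min_k x_k$. Since $e^{-\lambda x_k} \le e^{-\lambda m}$ for every $k$, summing gives $\frac{1}{K}\sum_k e^{-\lambda x_k} \le e^{-\lambda m}$; keeping only a minimizing index gives $\frac{1}{K}\sum_k e^{-\lambda x_k} \ge \frac{1}{K} e^{-\lambda m}$. Applying the decreasing function $t \mapsto -\frac{1}{\lambda}\log t$ to this two-sided bound yields exactly \eqref{eqn:softmin.bound}. The two displayed identities are immediate: putting $x_k \equiv x$ collapses the sum to $K e^{-\lambda x}$, and factoring $e^{-\lambda c}$ out of $\sum_k e^{-\lambda(x_k + c)}$ produces the additive term $c$.

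For part (ii), smoothness and symmetry are read off the formula: $(x_1,\ldots,x_K) \mapsto \frac{1}{K}\sum_k e^{-\lambda x_k}$ is $C^\infty$, symmetric, and takes values in $(0,\infty)$, on which $t \mapsto -\frac{1}{\lambda}\log t$ is $C^\infty$, so $\mathsf{m}_\lambda$ is $C^\infty$ and symmetric. For concavity I would compute the gradient and Hessian explicitly. Writing $S := \sum_k e^{-\lambda x_k}$ and $w_k := e^{-\lambda x_k}/S$, so that $w = (w_1,\ldots,w_K)$ is a probability vector, one finds $\partial_{x_j}\mathsf{m}_\lambda = w_j$ and $\partial^2_{x_i x_j}\mathsf{m}_\lambda = \lambda(w_i w_j - \delta_{ij} w_j)$, i.e.\ the Hessian is $H = \lambda(w w^\top - \operatorname{diag}(w))$. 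For any $v \in \mathbb{R}^K$ this gives $v^\top H v = \lambda((\sum_k w_k v_k)^2 - \sum_k w_k v_k^2) \le 0$ by the Cauchy--Schwarz (equivalently Jensen) inequality applied to the probability vector $w$; hence $H \preceq 0$ and $\mathsf{m}_\lambda$ is concave. (Alternatively, one may simply quote convexity of the log-sum-exp function and note that $\mathsf{m}_\lambda(x) = -\frac{1}{\lambda}\log(\sum_k e^{-\lambda x_k}) + \frac{1}{\lambda}\log K$ is, up to an additive constant, the negative of a convex function precomposed with the affine map $x \mapsto -\lambda x$.) The gradient formula $\partial_{x_j}\mathsf{m}_\lambda = w_j \ge 0$ also shows $\mathsf{m}_\lambda$ is nondecreasing in each coordinate; I would record this, as it is exactly what is needed in (iii).

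For part (iii), fix $p, q$ in the convex domain and $t \in [0,1]$, and let $r := t p + (1-t) q$. Concavity of each $\Phi^{(k)}$ gives $\Phi^{(k)}(r) \ge t\,\Phi^{(k)}(p) + (1-t)\,\Phi^{(k)}(q)$. Since $\mathsf{m}_\lambda$ is coordinatewise nondecreasing by (ii), applying it to these $K$ inequalities yields $\Phi(r) \ge \mathsf{m}_\lambda(t\,\Phi^{(1)}(p) + (1-t)\,\Phi^{(1)}(q), \ldots, t\,\Phi^{(K)}(p) + (1-t)\,\Phi^{(K)}(q))$. Then concavity of $\mathsf{m}_\lambda$ on $\mathbb{R}^K$ bounds the right-hand side below by $t\,\mathsf{m}_\lambda(\Phi^{(1)}(p),\ldots,\Phi^{(K)}(p)) + (1-t)\,\mathsf{m}_\lambda(\Phi^{(1)}(q),\ldots,\Phi^{(K)}(q)) = t\,\Phi(p) + (1-t)\,\Phi(q)$, which is the claimed inequality \eqref{eqn:softmin.concave}. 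None of the steps is a real obstacle; the only point requiring mild care is the Hessian sign in (ii), and even that reduces to the nonnegativity of the variance of a bounded random variable. The one thing I would be sure not to omit is stating the coordinatewise monotonicity of $\mathsf{m}_\lambda$, since it is the bridge between concavity of the inner functions and concavity of the outer one.
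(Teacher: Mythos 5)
Your proof is correct and follows essentially the same route as the paper: part (iii) is established exactly as in the paper's proof, via coordinatewise monotonicity of $\mathsf{m}_\lambda$ combined with concavity of each $\Phi^{(k)}$ and joint concavity of $\mathsf{m}_\lambda$ from (ii). The only difference is that you supply the elementary details for (i) and (ii) (including the Hessian computation $H=\lambda(ww^\top-\operatorname{diag}(w))\preceq 0$), which the paper omits as routine.
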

\begin{proof}
	All statements can be proved by elementary means, and for completeness we give the proof of (iii). First we observe that if $y_k \geq x_k$ for all $k$ (possibly after a permutation of the elements), then
	\[
	\mathsf{m}_{\lambda} \{y_k\}  \geq \mathsf{m}_{\lambda} \{x_k\}.
	\]
	Let $p$, $q$ be elements of the domain of the $\Phi^{(k)}$, and let $0 < \alpha < 1$. By the previous remark as well as (ii) and the concavity of the $\Phi^{(k)}$, we have
	\begin{equation*}
	\begin{split}
	\Phi((1 - \alpha) p + \alpha q) &= \mathsf{m}_{\lambda}\{ \Phi^{(k)}( (1 - \alpha) p + \alpha q) \} \\
	&\geq \mathsf{m}_{\lambda}\{ (1 - \alpha) \Phi^{(k)}(p) + \alpha \Phi^{(k)}(q)\} \\
	&\geq (1 - \alpha) \mathsf{m}_{\lambda}\{\Phi^{(k)}(p)\} +\alpha \mathsf{m}_{\lambda}\{\Phi^{(k)}(q)\} \\
	&= (1 - \alpha) \Phi(p) + \alpha \Phi(q).
	\end{split}
	\end{equation*}
	This proves that the softmin $\Phi$ is concave as well.
\end{proof}

By Lemma \ref{lem:softmin}, if $\Phi^{(1)}, \ldots, \Phi^{(K)} \in \mathcal{C}$ then so is $\Phi = \mathsf{m}_{\lambda}(\Phi^{(1)}, \ldots, \Phi^{(K)})$. Moreover, by the differentiability of the softmin, if each $\Phi^{(k)}$ is differentiable, then so is $\Phi$.

\begin{remark}
	As explained in Section \ref{sec:spt}, every element of $\mathcal{C}$ can be regarded as a portfolio generating function. Suppose $\Phi^{(1)}, \ldots, \Phi^{(K)} \in \mathcal{C}$ are differentiable, and let $\Phi = \mathsf{m}_{\lambda}(\Phi^{(1)}, \ldots, \Phi^{(K)})$ be their softmin. Also let $\boldsymbol{\pi}^{(k)}$ be the portfolio map generated by $\Phi^{(k)}$ in the sense of \eqref{eqn:fgp}. Then it can be shown by a straightforward computation that the portfolio map generated by $\Phi$ is given by
	\begin{equation} \label{prop:softmin.portfolio}
	\boldsymbol{\pi}(p) = \left(1 - \sum_{k = 1}^K a_k(p)\right) p + \sum_{k = 1}^K a_k(p) \boldsymbol{\pi}^{(k)}(p), \quad p \in \Delta_n,
	\end{equation}
	where
	\begin{equation} \label{prop:softmin.portfolio.coeff}
	a_k(p) = \frac{e^{-\lambda \Phi_k(p)}}{\sum_{\ell = 1}^K e^{-\lambda \Phi_{\ell}(p)}} \cdot \frac{\Phi_k(p)}{\Phi(p)} = \frac{\Phi_k(p) e^{-\lambda \Phi_k(p)}}{K  \Phi(p) e^{-\lambda \Phi(p)}}, \quad k = 1, \ldots, K.
	\end{equation}
	Thus $\boldsymbol{\pi}$ is a linear combination of the market portfolio and the portfolios generated by $\{\Phi^{(k)}\}$. In the limiting case $\lambda \rightarrow \infty$ (i.e., the hardmin), \eqref{prop:softmin.portfolio} gives
	\[
	\boldsymbol{\pi}(p) = \boldsymbol{\pi}^{(k)}(p), \quad \text{if } \Phi^{(k)}(p) < \Phi^{(\ell)}(p) \text{ for } \ell \neq k.
	\]
\end{remark}

To conclude this subsection, we state some estimates of concave functions which are useful in several results below. The proof is given in the appendix.

\begin{lemma} \label{lem:concave.bound} { \ }
	\begin{enumerate}
		\item[(i)] For each $q \in \Delta_n$ there exists an explicit constant $M_q > 0$ such that
		\begin{equation} \label{eqn:concave.bound}
		\psi(p) \leq M_q \psi(q) \quad \mbox{ for all } p \in \overline{\Delta}_n  \mbox{ and all } \psi \in \mathcal{C}.
		\end{equation}
		
		\item[(ii)] For $1 \leq j \leq n$ and $0 < \epsilon \leq \frac{1}{n}$ we define $c^{(j, \epsilon)} \in \Delta_n$ by
		\begin{equation} \label{eqn:c.j.epsilon}
		c^{(j, \epsilon)}_i = \left\{ \begin{array}{cl} \epsilon & i = j, \\[1ex]
		\frac{1- \epsilon}{n-1} & i \neq j.
		\end{array} \right.
		\end{equation}
		Thus $c^{(j, \epsilon)}$ is the center of the slice $p_j = \epsilon$ through the simplex $\Delta_n$. Suppose $p \in \overline{\Delta}_n$ satisfies $0 \le  p_j \leq \epsilon \leq 1/n$ for some $j$.  Then
		\begin{equation} \label{eqn:concave.bound2}
		\psi(p)  \le n\psi(c^{(j,\epsilon)}) \quad \mbox{ for all }  \psi \in \mathcal{C}.
		\end{equation}
		In particular we have
		\[
		\psi(p) \leq n \psi(\overline{e}) \quad \mbox{ for all } p \in \overline{\Delta}_n  \mbox{ and } \psi \in \mathcal{C}.
		\]
	\end{enumerate}
\end{lemma}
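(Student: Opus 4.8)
The plan is to deduce both parts from a single observation about non-negative concave functions. Suppose $\psi \in \mathcal{C}$ and a point $z \in \overline{\Delta}_n$ can be written as a convex combination $z = \lambda w + (1-\lambda) r$ with $w, r \in \overline{\Delta}_n$ and $\lambda \in (0,1)$. Then concavity together with $\psi \ge 0$ gives $\psi(z) \ge \lambda \psi(w) + (1-\lambda)\psi(r) \ge \lambda \psi(w)$, hence $\psi(w) \le \lambda^{-1}\psi(z)$. So the whole task is to produce, for the relevant target point $z$ (which will be $q$ in (i) and $c^{(j,\epsilon)}$ in (ii)) and an arbitrary $w \in \overline{\Delta}_n$, such a representation with $\lambda$ bounded below by an explicit constant. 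Note that for any $\lambda \in (0,1)$ satisfying $\lambda w_i \le z_i$ for every $i$, the point $r := (z - \lambda w)/(1-\lambda)$ automatically has $\sum_i r_i = 1$ and $r_i \ge 0$, so it lies in $\overline{\Delta}_n$; thus everything hinges on how large a $\lambda$ one can take, which is governed by the ratios $z_i/w_i$.

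For (i) I would take $z = q$, $w = p$, and $\lambda = q_{\min} := \min_i q_i$, which lies in $(0, 1/n] \subset (0,1)$ since $n \ge 2$. Because $p_i \le 1$ for all $i$, we get $q_i - \lambda p_i \ge q_i - q_{\min} \ge 0$, so $r := (q - \lambda p)/(1-\lambda) \in \overline{\Delta}_n$ and the observation above yields $\psi(p) \le q_{\min}^{-1}\psi(q)$. Hence the explicit constant $M_q = 1/\min_i q_i$ works.

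For (ii), write $c = c^{(j,\epsilon)}$ and take $z = c$, $w = p$, and $\lambda = 1/n$. The only thing to verify is that $c_i \ge \tfrac1n p_i$ for every $i$. For $i = j$ this reads $\epsilon \ge \tfrac1n p_j$, which is immediate from $p_j \le \epsilon$; for $i \neq j$ it reads $\tfrac{1-\epsilon}{n-1} \ge \tfrac1n p_i$, i.e. $p_i \le \tfrac{n(1-\epsilon)}{n-1}$, and here the hypothesis $\epsilon \le 1/n$ is exactly what forces $\tfrac{n(1-\epsilon)}{n-1} \ge 1 \ge p_i$. Consequently $r := (c - \tfrac1n p)/(1 - \tfrac1n) \in \overline{\Delta}_n$ and $\psi(p) \le n\,\psi(c^{(j,\epsilon)})$. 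For the final assertion, observe that $c^{(j,1/n)} = \overline{e}$ for every $j$, while any $p \in \overline{\Delta}_n$ has some coordinate $p_j \le 1/n$ because $\sum_i p_i = 1$; applying the bound with $\epsilon = 1/n$ gives $\psi(p) \le n\,\psi(\overline{e})$ (this is also just the $q = \overline{e}$ case of (i)).

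I do not anticipate a genuine obstacle: the single idea is that the correct convex combination to write down is dictated by the ratios $z_i/w_i$, after which each claim collapses to a one-line inequality. The only points that require care are the bookkeeping — in particular the precise role of the hypothesis $\epsilon \le 1/n$ in keeping $\lambda = 1/n$ admissible in (ii) — and the trivial facts $p_i \le 1$ and $n \ge 2$, which are what guarantee $\lambda \in (0,1)$ throughout.
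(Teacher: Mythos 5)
Your proof is correct and uses essentially the same mechanism as the paper: write the reference point ($q$ in (i), $c^{(j,\epsilon)}$ in (ii)) as a convex combination of $p$ and an auxiliary point of $\overline{\Delta}_n$, then invoke concavity together with non-negativity of $\psi$ at the auxiliary point. The only cosmetic difference is that the paper obtains the coefficient by extending the ray from $p$ through the reference point to the boundary, yielding $M_q = \mathrm{diam}(\overline{\Delta}_n)/\mathrm{dist}(q,\partial\Delta_n)$ in (i), whereas you verify admissibility of $\lambda$ coordinatewise and get $M_q = 1/\min_i q_i$; both constants are explicit, and your choice $\lambda = 1/n$ in (ii) reproduces the paper's bound exactly.
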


\subsection{The probabilistic model} \label{sec:model.specifics}
In this paper we study a natural implementation of the representation \eqref{eqn:concave.duality}. Namely, we consider random concave functions given as (soft) minimums of i.i.d.~random hyperplanes.

Let $C = (C_1, \ldots, C_n)$ be a random vector with values in the quadrant $\mathbb{R}_{+}^n$. Note that the components of $C$ may be dependent. Throughout the paper we let $(\Omega, \mathcal{F}, \mathbb{P})$ be a probability space on which the required random elements are defined. Given $C$, we define a random element $\ell$ of $\mathcal{A}_+$ given by
\begin{equation} \label{eqn:random.affine}
\ell(p) = \langle C, p \rangle = \sum_{i = 1}^n p_i C_i, \quad p \in \overline{\Delta}_n.
\end{equation}

For $K = 1, 2, \ldots$, let $\ell_1, \ell_2, \ldots, \ell_K$ be independent copies of $\ell$. Define a random concave function $\Psi_K$ by
\begin{equation} \label{eqn:phi}
\Psi_K = a_K \mathsf{m}_{\lambda}(\ell_1, \ldots, \ell_K),
\end{equation}
where $\lambda = \lambda_K \in (0, \infty]$ possibly depends on $K$ and $a_K > 0$ is a scaling constant to be chosen. The law of $\Psi_K$ defines a probability measure $\nu_K$ on $\mathcal{C}$ which depends on $K$, $\lambda_K$, $a_K$ and the distribution of $C$. We are interested in the limiting behavior of $\nu_K$ as $K \rightarrow \infty$.

\subsection{Deterministic limit for softmin}\label{sec:as.limit}


To give quickly a concrete result, in this subsection we consider the model \eqref{eqn:phi} where $0 < \lambda < \infty$ is fixed and independent of $K$, and there is no scaling, i.e., $a_K \equiv 1$. Using the strong law of large numbers, we show that there is a deterministic almost sure limit as $K \rightarrow \infty$.

\begin{theorem} \label{thm:softmin.fixed.lambda}
	Fix $\lambda > 0$. Let
	\begin{equation} \label{eqn:cgf}
	\psi(t_1, \ldots, t_n) := \log \mathbb{E} e^{t_1 C_1 + \cdots + t_n C_n}
	\end{equation}
	be the cumulant generating function of $C = (C_1, \ldots, C_n)$ which is finite and convex on $(-\infty, 0]^n$. Let $\{\ell_k\}_{k = 1}^{\infty}$ be a sequence of independent copies of $\ell$ as in \eqref{eqn:random.affine}, and, for each $K \geq 1$, let $\Psi_K$ be the random concave function defined by $
	\Psi_K = \mathsf{m}_{\lambda}\left(\ell_1, \ldots, \ell_K\right)$. Then
	\[
	\lim_{K \rightarrow \infty} \Psi_K = \Psi_{\infty} \quad \text{$\mathbb{P}$-a.s.},
	\]
	where $\Psi_{\infty} \in \mathcal{C}_+$ is the deterministic concave function given by
	\begin{equation} \label{eqn:softmin.limit}
	\Psi_{\infty}(p) =  \frac{-1}{\lambda} \psi(-\lambda p), \quad p \in \Delta_n.
	\end{equation}
	Here the convergence means that $d(\Psi_K, \Psi_{\infty}) \rightarrow 0$ a.s., where $d$ is the metric on $\mathcal{C}$ defined by \eqref{eqn:C.metric}.
\end{theorem}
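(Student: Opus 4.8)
The plan is to reduce, pointwise in $p$, to the scalar strong law of large numbers, and then to promote pointwise a.s.\ convergence to a.s.\ convergence in the metric $d$ via the classical convergence theorem for concave functions.

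First I would fix $p \in \Delta_n$ and write $\ell_k(p) = \langle C^{(k)}, p\rangle$, where $C^{(1)}, C^{(2)}, \dots$ are i.i.d.\ copies of $C$, so that $\Psi_K(p) = -\tfrac{1}{\lambda} \log\big(\tfrac{1}{K} \sum_{k=1}^K e^{-\lambda\langle C^{(k)}, p\rangle}\big)$. The summands $e^{-\lambda\langle C^{(k)},p\rangle}$ are i.i.d., lie in $(0,1]$ (because $C_i > 0$ and $p_i \ge 0$), and have finite mean $e^{\psi(-\lambda p)} \in (0,1)$, since $-\lambda p$ lies in the interior of $(-\infty,0]^n$ where $\psi$ is finite, and $\langle C,p\rangle > 0$ a.s.\ for $p \in \Delta_n$. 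Kolmogorov's SLLN then gives $\tfrac{1}{K}\sum_{k=1}^K e^{-\lambda\langle C^{(k)},p\rangle} \to e^{\psi(-\lambda p)}$ a.s., hence $\Psi_K(p) \to -\tfrac{1}{\lambda}\psi(-\lambda p) = \Psi_\infty(p)$ a.s.

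Next I would check that $\Psi_\infty$ is genuinely an element of $\mathcal{C}_+$: convexity of $\psi$ makes $p \mapsto \psi(-\lambda p)$ convex and hence $\Psi_\infty$ concave; finiteness and continuity of $\psi$ on the interior of its (full-dimensional) domain make $\Psi_\infty$ continuous on $\Delta_n$; strict negativity of $\psi(-\lambda p) = \log \mathbb{E} e^{-\lambda\langle C,p\rangle}$ on $\Delta_n$ makes $\Psi_\infty$ positive there; and \cite[Theorem 10.3]{R70} supplies the continuous extension to $\overline{\Delta}_n$. I would also note that each $\Psi_K \in \mathcal{C}$ by Lemma \ref{lem:softmin}(iii), being a softmin of the affine functions $\ell_k \in \mathcal{A}_+ \subset \mathcal{C}$. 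Then, taking a countable dense set $D \subset \Delta_n$ and letting $N$ be the union over $q \in D$ of the exceptional null sets from the previous step, $\mathbb{P}(N) = 0$, and off $N$ we have $\Psi_K(q) \to \Psi_\infty(q)$ for every $q \in D$ simultaneously.

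It remains to upgrade this to $d(\Psi_K,\Psi_\infty) \to 0$ off $N$, i.e.\ to uniform convergence on each $\Delta_{n,k}$. For a fixed $\omega \notin N$, the functions $\Psi_K(\cdot,\omega)$ are finite concave functions on the relatively open convex set $\Delta_n$ that converge on the dense set $D$ to the finite concave function $\Psi_\infty$; by the concave version of \cite[Theorems 10.8 and 10.9]{R70}, the convergence is then pointwise on all of $\Delta_n$ and uniform on every compact subset (the required local uniform boundedness is automatic — nonnegativity bounds $\Psi_K$ below, while convergence at finitely many affinely independent points of $D$ together with concavity bounds it above on compact subsets). Dominated convergence in the defining series \eqref{eqn:C.metric} then gives $d(\Psi_K(\cdot,\omega),\Psi_\infty) \to 0$, and since $\mathbb{P}(N)=0$ the theorem follows. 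The proof involves no hard estimate; the only point needing care is bookkeeping — fixing the almost-sure event before taking suprema over $p$, and verifying that the hypotheses of the concave convergence theorem are in force — so this last step, while routine, is where I would be most careful.
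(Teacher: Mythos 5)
Your proposal is correct and follows essentially the same route as the paper's proof: pointwise almost sure convergence via the strong law of large numbers, extension to a countable dense subset of $\Delta_n$, and then an appeal to \cite[Theorem 10.8]{R70} to upgrade to uniform convergence on compact subsets and hence convergence in the metric $d$. The additional verifications you include (integrability of the summands, membership of $\Psi_\infty$ in $\mathcal{C}_+$) are correct and only make the argument more complete.
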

\begin{proof}
	Let $p \in \Delta_n$ be fixed. By definition, we have
	\[
	\Psi_K(p) = \frac{-1}{\lambda} \log \left( \frac{1}{K} \sum_{k = 1}^K e^{-\lambda \ell_k(p)}\right).
	\]
	By the strong law of large numbers, we have the almost sure limit
	\begin{equation*}
	\begin{split}
	\lim_{K \rightarrow \infty} \frac{1}{K} \sum_{k = 1}^K e^{-\lambda \ell_k(p)} = \mathbb{E} e^{-\lambda \sum_{i = 1}^n C_i p_i} =  e^{\psi(-\lambda p)}.
	\end{split}
	\end{equation*}
	Taking logarithm and dividing by $-\lambda$ shows that $\Psi_K(p) \rightarrow \Psi_{\infty}(p)$ almost surely. The pointwise convergence then holds, with probability $1$, over a countable dense subset of $\Delta_n$. By \cite[Theorem 10.8]{R70} we have uniform convergence over compact subsets which implies convergence in the metric $d$.
\end{proof}

\begin{example} \label{ex:softmin.limit}
	In the context of Theorem \ref{thm:softmin.fixed.lambda}, suppose $C_1, \ldots, C_n$ are i.i.d.~exponential random variables with rate $\alpha > 0$. The cumulant generating function is given by
	\[
	\psi(t) =  \sum_{i = 1}^n \log \frac{\alpha}{\alpha - t_i}.
	\]
	It follows that the limiting function \eqref{eqn:softmin.limit} is given by
	\begin{equation} \label{eqn:exp.limit}
	\Psi_{\infty}(p) = \sum_{i = 1}^n \frac{1}{\lambda} \log \left(1 + \frac{\lambda}{\alpha}{p_i}\right).
	\end{equation}
	Some samples from this model are given in Figure \ref{fig:softminsamples}.
	
	\begin{figure}[t]
		\centering
		\includegraphics[scale=0.45]{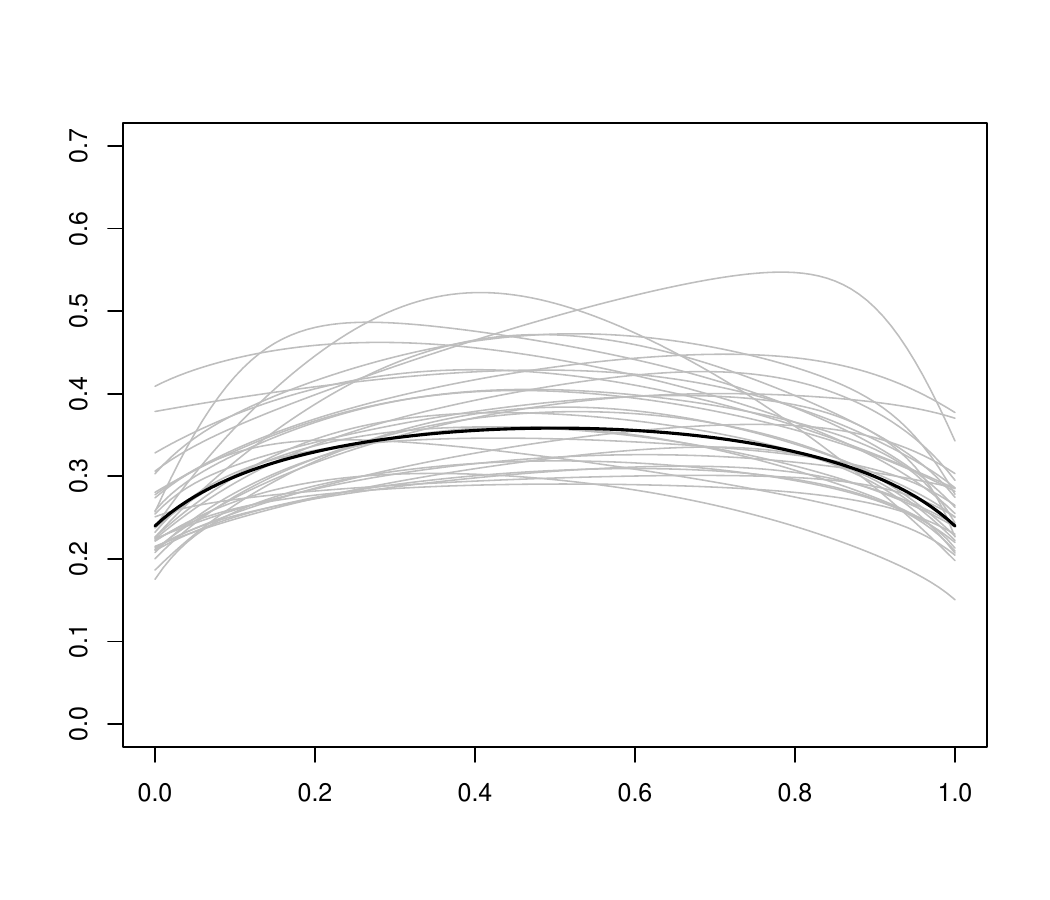}
		\vspace{-0.6cm}
		\caption{Samples (in grey) of $\Psi_K$ in Example \ref{ex:softmin.limit}. Here $n = 2$ (so that the simplex can be identified with the unit interval), $\alpha = 1$, $\lambda = 10$ and $K = 30$. The limiting function $\Psi_{\infty}$ is shown by the thick black curve.} \label{fig:softminsamples}
	\end{figure}
	
	An interesting question is what happens when $\lambda \rightarrow \infty$. From \eqref{eqn:fgp}, $\Psi$ and $c \Psi$ generate the same portfolio map for any $c > 0$. Thus we consider instead the limit of $\Psi_{\infty}(p) / \Psi_{\infty}(\overline{e})$ as $\lambda \rightarrow \infty$ (recall that $\overline{e}$ is the barycenter of $\Delta_n$). It turns out that
	\begin{equation} \label{eqn:lambda.limit}
	\lim_{\lambda \rightarrow \infty} \frac{\Psi_{\infty}(p)}{\Psi_{\infty}(\overline{e})} = \lim_{\lambda \rightarrow \infty} \frac{\sum_{i = 1}^n \log \left(1 + \frac{\lambda}{\alpha}{p_i}\right)}{n \log \left(1 + \frac{\lambda}{\alpha}\frac{1}{n}\right)} = 1, \quad p \in \Delta_n.
	\end{equation}
	Accordingly, as $\lambda \rightarrow \infty$ the corresponding portfolio converges to the market portfolio $\boldsymbol{\pi}(p) \equiv p$. This result suggests that the limits $\lim_{\lambda \rightarrow \infty} \lim_{K \rightarrow \infty}$ and $\lim_{K \rightarrow \infty} \lim_{\lambda \rightarrow \infty}$ are different in our model; the difference will become clear in the next section.
\end{example}


\section{Weak limit for hardmin} \label{sec:limit.lambda.inf}
Now we consider the case of hardmin $\mathsf{m}_{\infty} = \min$ so that $\lambda_K = \infty$ for all $K$. We show that a suitable scaling gives a non-trivial limiting distribution. This distribution can be characterized in terms of its tail probability, i.e., $\mathbb{P}\left(\Psi \geq \psi\right)$ for $\psi \in \mathcal{C}$, and can be realized by a Poisson point process via a novel duality.

\subsection{Main results}
In this section we impose the following conditions on the random vector $C$ which defines the random hyperplane $\ell(\cdot) = \langle C, \cdot\rangle$.

\begin{assumption} \label{ass:density}
	The random vector $C = (C_1, \ldots, C_n)$ has a joint density $\rho$ on $\mathbb{R}_{+}^n$ which is asymptotically homogeneous of order $\alpha$ near the origin. More precisely, there exist $\alpha \in \mathbb{R}$ and a non-negative measurable function $h$ on $\mathbb{R}_{+}^n$ such that $\int_{\mathbb{R}_+^n} h(x) dx \in (0, \infty]$ and
	\begin{equation} \label{eqn:origin}
	\lim_{\kappa \rightarrow 0^+} \frac{1}{\kappa^{\alpha}}\rho(\kappa x) =  h(x), \quad x \in \mathbb{R}_+^n,
	\end{equation}
	uniformly for $x \in \Delta_n$.
\end{assumption}

\begin{remark}
	Since we use the hardmin, as $K$ grows the (unnormalized) minimum $\min\{ \ell_1, \ldots, \ell_K \}$ becomes smaller and smaller. Consequently, the weak limit of the scaled minimum, if exists, only depends on the distribution of $C$ in a neighborhood of the origin. This consideration motivates Assumption \ref{ass:density}.
\end{remark}

Assumption \ref{ass:density} imposes rather strong conditions on the function $h$. In the next lemma we gather some properties that are used subsequently. The proof is given in the Appendix.

\begin{lemma} \label{lem:assumption.consequences}
	Under Assumption \ref{ass:density} the exponent $\alpha$ is uniquely determined and $\alpha > -n$. Also $h(\kappa x) = \kappa^{\alpha} h(x)$ for all $\kappa > 0$ and $x \in \mathbb{R}_+^n$, so that $h$ is homogeneous of order $\alpha$. Moreover, we have
	\begin{equation} \label{eqn:h.integral}
	\lim_{\kappa \rightarrow 0^+} \frac{1}{\kappa^{\alpha}} \int_A \rho(\kappa x) dx = \int_A h(x) dx < \infty
	\end{equation}
	for every bounded Borel set $A \subset \mathbb{R}_+^n$. Thus $h$ is locally integrable.
\end{lemma}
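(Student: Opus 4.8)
\emph{Proof plan.} The idea is to first read off the scaling structure of $h$ from \eqref{eqn:origin}, and then to exploit the fact that $\rho$ is an honest probability density --- so that slices and neighbourhoods of the origin carry controlled mass --- in order to promote the pointwise limit \eqref{eqn:origin} to the integrated statement \eqref{eqn:h.integral}; uniqueness of $\alpha$ and homogeneity of $h$ will then fall out as corollaries. Concretely, since the limit in \eqref{eqn:origin} exists for \emph{every} $x\in\mathbb{R}_+^n$, the substitution $\kappa\mapsto\kappa/t$ gives $h(tx)=\lim_{\kappa\to0^+}\kappa^{-\alpha}\rho(\kappa t x)=t^{\alpha}\lim_{\kappa'\to0^+}(\kappa')^{-\alpha}\rho(\kappa' x)=t^{\alpha}h(x)$ for $t>0$, so $h$ is homogeneous of order $\alpha$; the uniformity over $\Delta_n$ forces $h$ to be finite on $\Delta_n$ (one cannot uniformly approach $+\infty$), hence finite on all of $\mathbb{R}_+^n$ by homogeneity. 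Writing $y=s\widehat y$ with $s=\sum_i y_i$ and $\widehat y=y/s\in\Delta_n$, Lebesgue measure factors as $dy=s^{n-1}\,ds\,d\sigma(\widehat y)$ for a fixed finite measure $\sigma$ on $\Delta_n$, and homogeneity turns every integral of $h$ into a radial integral $\int s^{\alpha+n-1}\,ds$ times $\int_{\Delta_n}h\,d\sigma$. Moreover $\int_{\Delta_n}h\,d\sigma>0$, since otherwise $h=0$ a.e.\ on $\Delta_n$, hence a.e.\ on $\mathbb{R}_+^n$, contradicting $\int_{\mathbb{R}_+^n}h>0$.

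Next I would establish three analytic facts. \textbf{(a)} $h\in L^1(\Delta_n,\sigma)$: since $\int_{\mathbb{R}_+^n}\rho=1$, the polar factorization and Tonelli give $\int_0^\infty s^{n-1}g(s)\,ds=1$, where $g(s):=\int_{\Delta_n}\rho(s\widehat y)\,d\sigma(\widehat y)$, so $g(s)<\infty$ for a.e.\ $s$ and there is an arbitrarily small $s_0>0$ with $g(s_0)<\infty$; taking $\epsilon=1$ in \eqref{eqn:origin} and choosing $s_0$ below the corresponding threshold, $h(\widehat y)\le s_0^{-\alpha}\rho(s_0\widehat y)+1$ on $\Delta_n$, and integrating gives $\int_{\Delta_n}h\,d\sigma\le s_0^{-\alpha}g(s_0)+\sigma(\Delta_n)<\infty$. \textbf{(b)} Near-origin domination: the same $\epsilon=1$ bound, read in polar form, gives $\rho(y)\le\bigl(\textstyle\sum_i y_i\bigr)^{\alpha}\bigl(h(\widehat y)+1\bigr)$ whenever $\sum_i y_i$ is below the threshold. \textbf{(c)} $\alpha>-n$: apply \eqref{eqn:origin} over the annulus $A_0=\{y\in\mathbb{R}_+^n:1\le\sum_i y_i\le2\}$; by homogeneity the uniform convergence on $\Delta_n$ upgrades to uniform convergence on $A_0$, so $\kappa^{-\alpha}\int_{A_0}\rho(\kappa x)\,dx\to\int_{A_0}h=\bigl(\int_{\Delta_n}h\,d\sigma\bigr)\int_1^2 s^{\alpha+n-1}\,ds\in(0,\infty)$; but $\kappa^{-\alpha}\int_{A_0}\rho(\kappa x)\,dx=\kappa^{-\alpha-n}\,\mathbb{P}(C\in\kappa A_0)\le\kappa^{-\alpha-n}\,\mathbb{P}(\sum_i C_i\le2\kappa)$, and $\mathbb{P}(\sum_i C_i\le2\kappa)\to0$ because $C$ has a density, so if $\alpha\le-n$ then $\kappa^{-\alpha-n}$ stays bounded and the left-hand side is forced to $0$, a contradiction. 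Hence $\alpha>-n$, and then $\int_0^\eta s^{\alpha+n-1}\,ds<\infty$, so by (a) and the polar factorization $\int_A h<\infty$ for every bounded Borel $A\subset\mathbb{R}_+^n$ (any such $A$ lies in some $\{\sum_i y_i\le R\}$); this is the local integrability.

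With these in hand, \eqref{eqn:h.integral} follows by a splitting argument. Fix bounded Borel $A\subset\{\sum_i y_i\le R\}$; Fatou already gives $\liminf_{\kappa\to0^+}\kappa^{-\alpha}\int_A\rho(\kappa x)\,dx\ge\int_A h$, so only a matching $\limsup$ bound is needed. For $\eta\in(0,R)$ split $A=A^+_\eta\sqcup A^-_\eta$ according to whether $\sum_i y_i>\eta$ or not. On $A^+_\eta$, which is bounded away from the origin and has finite Lebesgue measure, the (homogeneity-extended) uniform convergence of \eqref{eqn:origin} gives $\kappa^{-\alpha}\int_{A^+_\eta}\rho(\kappa x)\,dx\to\int_{A^+_\eta}h$. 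On $A^-_\eta\subseteq\{\sum_i y_i\le\eta\}$, the domination (b) together with the radial integral gives, for $\kappa$ small, $\kappa^{-\alpha}\int_{A^-_\eta}\rho(\kappa x)\,dx\le C\eta^{\alpha+n}$ with $C$ independent of $\kappa$, and likewise $\int_{A^-_\eta}h\le C\eta^{\alpha+n}$; both vanish as $\eta\to0$ since $\alpha+n>0$. Letting $\kappa\to0$ and then $\eta\to0$ yields \eqref{eqn:h.integral}. Finally, for uniqueness of $\alpha$: if \eqref{eqn:origin} held with two pairs $(\alpha,h)$ and $(\alpha',h')$, then applying \eqref{eqn:h.integral} to $A_0$ for each gives $\kappa^{\alpha-\alpha'}\to\bigl(\int_{A_0}h'\bigr)/\bigl(\int_{A_0}h\bigr)\in(0,\infty)$ as $\kappa\to0^+$, which is possible only when $\alpha=\alpha'$.

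The routine steps (homogeneity, uniqueness, and assembling \eqref{eqn:h.integral} from the splitting) are short; the crux is controlling $\kappa^{-\alpha}\int_A\rho(\kappa\,\cdot\,)$ near the origin without a global dominating function. The three ingredients that make this work --- (i) $\rho$ being a probability density makes almost every slice $\{\sum_i y_i=s\}$ carry finite $\rho$-mass, which with the $\epsilon=1$ uniform bound yields $h\in L^1(\Delta_n)$; (ii) that same bound gives the near-origin domination $\rho(y)\lesssim(\sum_i y_i)^{\alpha}$; and (iii) $\alpha>-n$, obtained from the annulus comparison, makes the radial integrals converge --- must be sequenced in the right order, and this bookkeeping is the only real obstacle.
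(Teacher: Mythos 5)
Your proposal is correct, and while it shares the paper's skeleton (homogeneity by the substitution $\kappa\mapsto\kappa/t$; the bound $\alpha>-n$ extracted from the fact that $\mathbb{P}(C\in\kappa A)\to 0$ while a $\kappa^{n+\alpha}$-scaling limit of that probability is positive; uniqueness as a corollary), the execution differs in a substantive way. The paper extends the uniform convergence in \eqref{eqn:origin} from $\Delta_n$ to an arbitrary bounded set $A\subset\{\sum_i y_i\le\sigma\}$ in one line, via $\bigl|\kappa^{-\alpha}\rho(\kappa y)-h(y)\bigr|<c^{\alpha}\epsilon\le\sigma^{\alpha}\epsilon$ for $y=cx$, $x\in\Delta_n$, $c\le\sigma$, and then reads off \eqref{eqn:h.integral} and, taking $A=\{x:\sum_i x_i\le 1\}$, both $\int_A h<\infty$ and $\alpha>-n$. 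That inequality $c^{\alpha}\le\sigma^{\alpha}$ is only valid for $\alpha\ge 0$; when $\alpha\in(-n,0)$ the factor $c^{\alpha}$ blows up as $y\to 0$, so uniform convergence genuinely fails on sets touching the origin and some extra control near $0$ is needed. Your route supplies exactly that: you work in polar coordinates $y=s\widehat y$, prove $h\in L^1(\Delta_n,\sigma)$ directly from $\int\rho=1$ (almost every slice carries finite mass, combined with the $\epsilon=1$ uniform bound), obtain $\alpha>-n$ from an annulus $\{1\le\sum_i y_i\le 2\}$ on which the homogeneity-extended uniform convergence is unproblematic, and then control the near-origin piece of $\kappa^{-\alpha}\int_A\rho(\kappa\cdot)$ by the domination $\rho(y)\le(\sum_i y_i)^{\alpha}(h(\widehat y)+1)$ together with the convergent radial integral $\int_0^{\eta}s^{\alpha+n-1}\,ds$. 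The ordering you insist on --- $L^1(\Delta_n)$ first, then $\alpha>-n$ from the annulus, then the splitting --- is what makes the argument close; the cost is length, the gain is that the case $\alpha<0$ (e.g.\ Example 3.4(ii) with some $\alpha_i\in(-1,0)$) is handled without any hidden assumption. Two cosmetic points: the polar factorization carries a dimensional constant $c_n$ that you suppress (harmless, as it cancels everywhere), and your uniqueness argument presupposes that the whole chain has been run for both pairs $(\alpha,h)$ and $(\alpha',h')$, which is fine but worth saying explicitly.
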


Here we give some examples of random vectors that satisfy Assumption \ref{ass:density}.

\begin{example} \label{ex:density} {\ }
	\begin{enumerate}
		\item[(i)] Suppose $C$ has a continuous density $\rho$ on $\mathbb{R}_+^n$ with $\lim_{x \rightarrow 0} \rho(x)  = \gamma > 0$. Then \eqref{eqn:origin} holds with $\alpha = 0$ and $h(x) \equiv \gamma$.
		\item[(ii)] Suppose $C_1, \ldots, C_n$ are independent and $C_i$ has the gamma distribution with shape parameter $1 + \alpha_i > 0$ and scale parameter $\beta_i$. The density of $C$ is given by
		\[
		\rho(x) = \prod_{i = 1}^n \frac{{\beta_i}^{1+\alpha_i}}{\Gamma(1+\alpha_i)} x_i^{\alpha_i} e^{-\beta_i x_i}.
		\]
		Then \eqref{eqn:origin} holds with $\alpha = \sum_{i = 1}^n \alpha_i $ and
		\[
		h(x) = \prod_{i = 1}^n \frac{\beta_i^{1+\alpha_i}}{\Gamma(1+\alpha_i)} x_i^{\alpha_i}.
		\]
	\end{enumerate}
\end{example}

Our first main result is that a weak limit exists under a suitable scaling. 

\begin{theorem} \label{thm:weak.convergence}
	Under Assumption \ref{ass:density}, as $K \rightarrow \infty$ the distribution $\nu_K$ of the random function $\Psi_K := K^{\frac{1}{n+\alpha}} \min\{  \ell_1, \ldots, \ell_K \}$ converges weakly to a probability measure $\mu$ supported on $\mathcal{C}_+ \subset \mathcal{C}$.
\end{theorem}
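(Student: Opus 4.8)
The plan is to prove weak convergence of $\nu_K$ by establishing convergence of finite-dimensional distributions together with tightness, and then to identify the limit as being supported on $\mathcal{C}_+$. First I would fix a finite set of points $p^{(1)}, \ldots, p^{(m)} \in \Delta_n$ and study the joint law of the random vector $\bigl(\Psi_K(p^{(1)}), \ldots, \Psi_K(p^{(m)})\bigr)$. Writing $\Psi_K(p^{(j)}) = K^{\frac{1}{n+\alpha}} \min_{1\le k\le K} \langle C^{(k)}, p^{(j)}\rangle$ where the $C^{(k)}$ are i.i.d.\ copies of $C$, I would compute the joint tail: for fixed $r_1, \ldots, r_m > 0$,
\[
\mathbb{P}\Bigl(\Psi_K(p^{(j)}) \ge r_j,\ j=1,\ldots,m\Bigr) = \Bigl(1 - \mathbb{P}\bigl(\exists j:\ \langle C, p^{(j)}\rangle < K^{-\frac{1}{n+\alpha}} r_j\bigr)\Bigr)^K.
\]
The event inside is $C \in \bigcup_j H_j$ where $H_j = \{x \in \mathbb{R}_+^n : \langle x, p^{(j)}\rangle < K^{-\frac{1}{n+\alpha}} r_j\}$ is a shrinking simplex-like slab near the origin. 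Rescaling $x = K^{-\frac{1}{n+\alpha}} u$, the Jacobian contributes $K^{-\frac{n}{n+\alpha}}$, and Assumption~\ref{ass:density} (via Lemma~\ref{lem:assumption.consequences}, in particular \eqref{eqn:h.integral}) gives
\[
K \cdot \mathbb{P}\Bigl(C \in \textstyle\bigcup_j H_j\Bigr) = K^{1 - \frac{n}{n+\alpha}} \int_{\bigcup_j \{\langle u, p^{(j)}\rangle < r_j\}} K^{-\frac{\alpha}{n+\alpha}} \rho\bigl(K^{-\frac{1}{n+\alpha}} u\bigr)\, du \longrightarrow \int_{\bigcup_j \{\langle u, p^{(j)}\rangle < r_j\}} h(u)\, du,
\]
since $1 - \frac{n}{n+\alpha} = \frac{\alpha}{n+\alpha}$ and the exponents balance exactly — this is the reason for the normalization $K^{1/(n+\alpha)}$. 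Hence the joint tail converges to $\exp\bigl(-\int_{\bigcup_j \{\langle u, p^{(j)}\rangle < r_j\}} h(u)\,du\bigr)$, which is a bona fide multivariate survival function; checking it is right-continuous, decreasing, and has the correct limits (using local integrability of $h$ and $\alpha > -n$ for finiteness, and $\int h = \infty$ on large sets or the homogeneity to kill it at $r_j \to \infty$) establishes convergence of finite-dimensional distributions to a well-defined limiting law.

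Next I would establish tightness of $\{\nu_K\}$ in $\mathcal{C}$. By Lemma~\ref{lem:compact}, sets of the form $\{\psi \in \mathcal{C} : \psi \le M\}$ are compact, so it suffices to show $\sup_K \mathbb{P}(\Psi_K(\overline{e}) > M) \to 0$ as $M \to \infty$; indeed by Lemma~\ref{lem:concave.bound}(ii), $\Psi_K \le n\,\Psi_K(\overline{e})$ uniformly, so $\{\Psi_K \le nM\} \supseteq \{\Psi_K(\overline{e}) \le M\}$ and control at the barycenter controls the whole function. But $\Psi_K(\overline{e}) = K^{\frac{1}{n+\alpha}}\min_k \langle C^{(k)}, \overline{e}\rangle$, and the one-dimensional computation above (with $m=1$, $p^{(1)} = \overline{e}$) shows $\mathbb{P}(\Psi_K(\overline{e}) > M) \to \exp(-\int_{\{\langle u,\overline{e}\rangle < M\}} h(u)\,du)$, which tends to $0$ as $M \to \infty$ uniformly enough in $K$ (one needs a quantitative version: the convergence in \eqref{eqn:h.integral} gives, for $K$ large, a uniform bound, and small $K$ are finitely many); this yields tightness. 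Combining tightness with convergence of finite-dimensional distributions, and invoking Lemma~\ref{lem:unique} to guarantee the limit is uniquely pinned down by its finite-dimensional laws, gives $\nu_K \Rightarrow \mu$ for some probability measure $\mu$ on $\mathcal{C}$.

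Finally I would argue $\mu(\mathcal{C}_+) = 1$, i.e.\ the limiting random function is strictly positive on $\Delta_n$. This follows because for each fixed $p \in \Delta_n$ the limiting law of $\Psi_\infty(p)$ has survival function $\exp(-\int_{\{\langle u, p\rangle < r\}} h(u)\,du)$, which equals $1$ at $r = 0^+$ by local integrability of $h$ near the origin — so $\Psi_\infty(p) > 0$ a.s.\ pointwise; applying this along a countable dense set of $p$ and using concavity (a concave function on $\overline{\Delta}_n$ that is positive on a dense subset of $\Delta_n$ is positive on all of $\Delta_n$) upgrades to $\Psi_\infty \in \mathcal{C}_+$ almost surely. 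I expect the main obstacle to be the tightness/uniformity step: one must control $\mathbb{P}(\Psi_K(\overline{e}) > M)$ uniformly over \emph{all} $K$ simultaneously, which requires more than the pointwise limit \eqref{eqn:h.integral} — a uniform-in-$K$ upper bound on the rescaled integrals, plausibly extracted from the uniform convergence in Assumption~\ref{ass:density} on $\Delta_n$ combined with the homogeneity of $h$, or alternatively a direct domination argument (e.g.\ bounding $\rho$ near $0$ by a homogeneous majorant). Getting the right hypotheses to make this domination clean is the delicate point; everything else is a change of variables plus standard weak-convergence bookkeeping.
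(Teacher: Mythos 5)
Your proposal is correct and follows essentially the same route as the paper: convergence of finite-dimensional tails via the rescaling $x = K^{-1/(n+\alpha)}u$ and Lemma \ref{lem:assumption.consequences}, tightness via the bound $\psi \le n\psi(\overline{e})$ of Lemma \ref{lem:concave.bound} combined with Lemma \ref{lem:compact}, uniqueness of the limit via Lemma \ref{lem:unique}, and positivity on $\Delta_n$ from the one-point marginal plus concavity. The ``delicate'' uniformity-in-$K$ issue you flag in the tightness step is not actually an obstacle: once $\Psi_K(\overline{e})$ is known to converge in distribution to a real-valued random variable, the family of its laws is automatically tight on $\mathbb{R}$, which is exactly how the paper dispatches it.
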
  

Further properties of the limiting distribution $\mu$ will be studied in this and the next sections. To help the reader navigate through the hardmin case, here are pointers to the main results:
\begin{itemize}
\item In Proposition \ref{prop:finite.dimensional} we prove weak convergence of the finite dimensional distributions. In particular, if $\Psi \sim \mu$, then for $p \in \Delta_n$ fixed the random variable $\Psi(p)^{n + \alpha}$ is exponentially distributed. We compute $\mathbb{E}[ \Psi]$ in Corollary \ref{cor:one.point}.
\item In Theorem \ref{thm:tail.probability} we characterize $\mu$ in terms of the tail probability functional $\mathsf{T}_{\mu}(\psi) = \mu\{\omega \in \mathcal{C}: \omega \geq \psi\}$, $\psi \in \mathcal{C}$. Explicit formulas are given for an important special case in Theorems \ref{thm:stoke} and \ref{thm:stokes2}.
\item In Theorem \ref{thm:Poisson} we provide an explicit representation of the limit distribution via a Poisson point process on the positive quadrant $\mathbb{R}_+^n$. Consequently, if $\Psi \sim \mu$ then $\Psi$ is locally piecewise affine.
\item Further properties, including boundary behaviors and connections to portfolio maps, are given in Theorem \ref{thm:boundary.behavior} and Proposition \ref{prop:expectation.portfolio}.
\end{itemize}

\begin{remark} [Connections with extreme value theory] \label{rmk:EVT1}
Theorem \ref{thm:weak.convergence} is a limit theorem for the minimum of (properly rescaled) i.i.d.~hyerplanes over the simplex. As shown in the classic references \cite{LLR82, R87, DF07}, extrema of i.i.d.~random variables, random vectors and stochastic processes have been studied extensively in extreme value theory. It is customary in this theory to consider maxima instead of minima. Expressing our results in terms of maximums (by considering instead negative convex functions), we have that the marginal $-\Psi(p)$, for $p \in \Delta_n$, has a Weibull-type distribution with extreme value index $-1$ (see for example \cite[Theorem 1.1.3]{DF07}). To the best of our knowledge, the constraint of concavity and the unit simplex have not been considered in the literature. In particular, by using an elegant duality for positive concave functions on $\Delta_n$, to be described in the next subsection, we are able to obtain deep properties of the limiting distribution. These results suggest that concave functions on the unit simplex are natural and fundamental objects to study. Also see Remark \ref{rmk:EVT2} for more discussions about the Poisson representation (Theorem \ref{thm:Poisson}). Finally, we remark that our general framework \eqref{eqn:phi} involves the softmin which is rarely, if at all, considered in extreme value theory.
\end{remark}

\subsection{Duality for non-negative concave functions on $\overline{\Delta}_n$} \label{sec:duality}
Duality plays a major role in the proof of Theorem \ref{thm:weak.convergence} and several other results of this paper. For non-negative concave functions on the simplex the duality has an elegant form which is useful for our analysis. The reason is that each positive affine function on $\overline{\Delta}_n$ is specified by its values over the vertices $e_1, \ldots, e_n$ (see \eqref{eqn:affine} and Figure \ref{fig:planes}), so we do not need to specify the constant term separately.

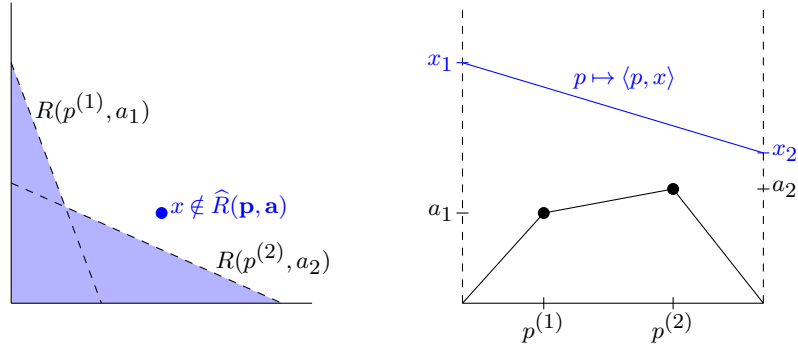
\begin{figure}[t]
	\begin{tikzpicture}[scale = 0.4]
	
	\fill[blue!30] (0, 0) to (0, 8) to (3, 0);
	\fill[blue!30] (0, 0) to (0, 4) to (9, 0);
	\draw (0, 0) to (0, 10);
	\draw (0, 0) to (10, 0);
	\draw[dashed] (0, 8) to (3, 0);
	\draw[dashed] (0, 4) to (9, 0);
	
	\node [black, right] at (0.5, 6.5) {{\footnotesize $R(p^{(1)}, a_1)$}};
	\node [black, right] at (6.5, 1.5) {{\footnotesize $R(p^{(2)}, a_2)$}};
	
	\draw[blue, fill = blue, radius = 0.18] (5, 3) circle;
	\node[blue, right] at (5, 3.3) {{\footnotesize $x \notin \widehat{R}({\bf p}, {\bf a})$}};
	
	\draw (15, 0) to (25, 0);
	\draw[dashed] (15, 0) to (15, 10);
	\draw[dashed] (25, 0) to (25, 10);
	
	
	\draw[fill = black, radius = 0.18] (17.7, 3) circle;
	\draw (17.7, -0.2) -- (17.7, 0.2);
	\node[black, below] at (17.7, 0) {{\footnotesize $p^{(1)}$}};
	\draw (14.8, 3) -- (15.2, 3);
	\node[black, left] at (15, 3) {{\footnotesize $a_1$}};
	
	\draw[fill = black, radius = 0.18] (22, 3.8) circle;
	\draw (22, -0.2) -- (22, 0.2);
	\node[black, below] at (22, 0) {{\footnotesize $p^{(2)}$}};
	\draw (24.8, 3.8) -- (25.2, 3.8);
	\node[black, right] at (25, 3.8) {{\footnotesize $a_2$}};
	
	\draw (15, 0) -- (17.7,3) -- (22, 3.8) -- (25, 0);
	
	\draw[blue] (15, 8) -- (25, 5);
	\draw[blue] (14.8, 8) -- (15.2, 8);
	\draw[blue] (24.8, 5) -- (25.2, 5);
	\node[blue, left]  at (15, 8) {{\footnotesize $x_1$}};
	\node[blue, right] at (25, 5) {{\footnotesize $x_2$}};
	\node[blue, above] at (20.4, 6.8) {{\footnotesize $p \mapsto \langle p, x \rangle$}};

	\end{tikzpicture}
	\caption{Left: The region $\widehat{R}({\bf p}, {\bf a})$ when $r = 2$, and a point $x \notin \widehat{R}({\bf p}, {\bf a})$. Right: $x \notin \widehat{R}({\bf p}, {\bf a})$ if and only if $\langle p, x \rangle \geq \psi_{{\bf p}, {\bf a}}(p)$.} \label{fig:duality}
\end{figure}

For $p \in \Delta_n$ and $a > 0$, we denote by $R(p, a)$ the region
\begin{equation} \label{eqn:region.R}
R(p, a) := \left\{x \in \mathbb{R}_{+}^n : \langle p, x \rangle < a \right\},
\end{equation}
which is an open convex set in $\mathbb{R}_{+}^n$. Since the hyperplane $\langle p, x \rangle = a$ intersects the $i$th coordinate axis at $x_i = a/p_i$, the Euclidean volume of $R(p, a)$ is
\begin{equation} \label{eqn:vol.R}
\text{vol}(R(p, a)) = \frac{a^n}{n!p_1 p_2 \cdots p_n}.
\end{equation}

If ${\bf p} = (p^{(1)}, p^{(2)}, \ldots, p^{(r)})$ is a collection of $r$ distinct points in $\Delta_n$ and ${\bf a} = (a_1, \ldots, a_r)$ is a collection of positive real numbers, we define
\begin{equation} \label{eqn:R.hat}
\widehat{R}({\bf p}, {\bf a}) := R(p^{(1)}, a_1) \cup \cdots \cup R(p^{(r)}, a_r).
\end{equation}
Identifying $x \in \mathbb{R}_{+}^n$ with the positive affine function $p \mapsto \langle p, x \rangle$ on $\Delta_n$, we see that $x \notin \widehat{R}({\bf p}, {\bf a})$ if and only if $\langle p^{(i)}, x\rangle \geq a_i$ for all $i$. This implies that $\langle p, x \rangle \geq \psi_{{\bf p}, {\bf a}}(p)$ on $\overline{\Delta}_n$, where
\begin{equation} \label{eqn:psi.p.a}
\psi_{{\bf p}, {\bf a}} := \inf\{\psi \in \mathcal{C}: \psi(p^{(i)}) \geq a_i, i = 1, \ldots, r\}
\end{equation}
is the smallest non-negative concave function generated by the given data. See  Figure \ref{fig:duality} for an illustration where $n = 2$ and we identify $\overline{\Delta}_2$ with the interval from $e_1$ to $e_2$.  Since the domain is the unit simplex the function $\psi_{{\bf p}, {\bf a}}$ is polyhedral, i.e., it is the minimum of a finite collection of hyperplanes.

More generally, for $\psi \in \mathcal{C}_+$ we define
\begin{equation} \label{eqn:R.hat.general}
\widehat{R}(\psi) := \bigcup_{p \in \Delta_n} R(p, \psi(p))
\end{equation}
and
\begin{equation} \label{eqn:S.hat}
\widehat{S}(\psi) := \mathbb{R}_{+}^n \setminus \widehat{R}(\psi) = \{x \in \mathbb{R}_{+}^n : \langle p, x \rangle \geq \psi(p) \text{ for all } p \in \Delta_n\}.
\end{equation}
Note that $ \widehat{S}(\psi)$ is a convex set, and by the identification $x \leftrightarrow (p \mapsto \langle p, x \rangle)$ we see that $\widehat{S}(\psi)$ is equivalent to the set $\{\ell \in \mathcal{A}_{+} : \ell \geq \psi\}$, so the operation $\psi \in \mathcal{C}_{+} \mapsto \widehat{S}(\psi)$ is one-to-one. Thus the set $\widehat{S}(\psi)$ plays the role of the conjugate. This duality can be formalized by adapting the concept of support function from convex analysis (see \cite[Section 13]{R70}).

\begin{proposition}
	For any $\psi \in \mathcal{C}_{+}$ we have
	$$
	\psi(p) = \inf_{x \in \widehat{S}(\psi)} \langle p,x \rangle, \quad \mbox{ for }p \in \overline{\Delta}_n.
	$$
\end{proposition}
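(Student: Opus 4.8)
The plan is to prove the two inequalities
\[
\psi(p) \leq \inf_{x \in \widehat{S}(\psi)} \langle p, x \rangle \quad\text{and}\quad \psi(p) \geq \inf_{x \in \widehat{S}(\psi)} \langle p, x \rangle
\]
separately, first on the open simplex $\Delta_n$ and then extend to $\overline{\Delta}_n$ by continuity. The first inequality is immediate from the definition of $\widehat{S}(\psi)$ in \eqref{eqn:S.hat}: every $x \in \widehat{S}(\psi)$ satisfies $\langle p, x \rangle \geq \psi(p)$ for all $p \in \Delta_n$, so taking the infimum over such $x$ preserves the bound.

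The reverse inequality is the substantive direction. Fix $p \in \Delta_n$; I want to produce, for every $\varepsilon > 0$, a point $x \in \widehat{S}(\psi)$ with $\langle p, x \rangle \leq \psi(p) + \varepsilon$. The natural candidate is a supporting hyperplane of $\psi$ at $p$: since $\psi \in \mathcal{C}_+$, the superdifferential $\partial \psi(p)$ is nonempty, so pick $\xi \in \partial \psi(p)$ and consider the affine function $\ell(q) = \psi(p) + \langle \xi, q - p \rangle$, which satisfies $\ell \geq \psi$ on $\overline{\Delta}_n$ and $\ell(p) = \psi(p)$. The difficulty is that $\ell$ need not be strictly positive on all of $\overline{\Delta}_n$ (it could vanish or go negative near a vertex), so $\ell$ need not correspond to a point of $\mathbb{R}_+^n = \mathcal{A}_+$. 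The fix is to perturb: for $x^{(\delta)} := x_\ell + \delta \mathbf{1}$ where $x_\ell = (\ell(e_1), \ldots, \ell(e_n))$ and $\mathbf{1} = (1,\ldots,1)$, the associated affine function is $q \mapsto \ell(q) + \delta$, which is $\geq \psi + \delta > \psi$ on $\overline{\Delta}_n$ hence lies in $\widehat{S}(\psi)$ provided its coordinates are positive — and for $\delta$ large enough (depending only on $\min_i \ell(e_i)$, which is finite) all coordinates $\ell(e_i) + \delta$ are positive. Wait, that gives a valid point but with $\langle p, x^{(\delta)} \rangle = \psi(p) + \delta$, and $\delta$ cannot be taken small. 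So a cleaner approach: instead of a single fixed $\delta$, note that $\psi > 0$ on $\Delta_n$, so for the fixed interior point $p$ we have $\psi(p) > 0$, and we can scale. Take $\ell$ as above and for $t \in (0,1)$ close to $1$ set $\ell_t := t\,\ell + (1-t)\,\psi(\bar e)\cdot n \cdot (\text{something})$ — cleaner still is to convex-combine $\ell$ with a fixed strictly positive affine function $\ell_0 \in \mathcal{A}_+$ lying above $\psi$ (which exists by \eqref{eqn:concave.duality}): then $\ell_t = t\ell + (1-t)\ell_0 \geq \psi$, $\ell_t \in \mathcal{A}_+$ since it is a convex combination of a nonnegative affine function and a strictly positive one, so the corresponding $x_t \in \widehat{S}(\psi)$, and $\langle p, x_t \rangle = \ell_t(p) = t\psi(p) + (1-t)\ell_0(p) \to \psi(p)$ as $t \to 1^-$. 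This yields $\inf_{x \in \widehat{S}(\psi)} \langle p, x \rangle \leq \psi(p)$, completing the case $p \in \Delta_n$.

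To pass to $p \in \overline{\Delta}_n$: the function $p \mapsto \inf_{x \in \widehat{S}(\psi)} \langle p, x \rangle$ is an infimum of affine (hence continuous) functions of $p$, so it is concave and upper semicontinuous on $\overline{\Delta}_n$; it agrees with the continuous function $\psi$ on the dense subset $\Delta_n$. By \cite[Theorem 10.3]{R70} (already invoked in Section \ref{sec:notations}) a concave function on $\Delta_n$ has a unique continuous extension to $\overline{\Delta}_n$, and since $\psi$ is that extension, the two functions coincide on $\overline{\Delta}_n$ as well. Alternatively one argues directly: for $p_0 \in \partial\overline{\Delta}_n$ take $p_k \to p_0$ in $\Delta_n$; by continuity of $\psi$, $\psi(p_k) \to \psi(p_0)$, and for any $x \in \widehat{S}(\psi)$, $\langle p_k, x\rangle \to \langle p_0, x\rangle$, giving $\langle p_0, x\rangle \geq \psi(p_0)$ (the first inequality on $\overline{\Delta}_n$), while the reverse follows by choosing, for each $k$, a near-optimal $x_k$ and controlling it via Lemma \ref{lem:concave.bound} to extract a convergent subsequence. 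The cleanest write-up just cites the uniqueness of continuous extension.

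The main obstacle is the positivity constraint: $\widehat{S}(\psi)$ only contains points of the open quadrant $\mathbb{R}_+^n$, so a supporting hyperplane at $p$ that touches or crosses the boundary of the nonnegative quadrant is not directly usable, and one must perturb it into $\mathcal{A}_+$ without losing control of its value at $p$. The convex-combination trick with a fixed dominating $\ell_0 \in \mathcal{A}_+$ (available from \eqref{eqn:concave.duality}) handles this cleanly, using crucially that $p$ is in the interior so that $\psi(p) > 0$ need not even be invoked — only that $\ell_t(p) \to \psi(p)$.
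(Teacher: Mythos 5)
Your proof is correct and follows essentially the same route as the paper: a supporting hyperplane at the interior point $p$, perturbed into the open quadrant $\mathbb{R}_+^n$ so as to land in $\widehat{S}(\psi)$ without changing its value at $p$ in the limit, followed by a continuity argument to reach $\partial\overline{\Delta}_n$. Your only wobble is the claim that $\ell$ "could go negative near a vertex" and that therefore a small additive shift fails: since $\ell \geq \psi \geq 0$ on $\overline{\Delta}_n$ we have $\ell(e_i) \geq 0$ for every $i$, so the paper simply uses $x^{(r)} = x + \left(\tfrac{1}{r},\ldots,\tfrac{1}{r}\right) \in \widehat{S}(\psi)$ with $r \to \infty$; your convex-combination with a dominating $\ell_0 \in \mathcal{A}_+$ is a valid (if slightly more elaborate) substitute that in any case relies on the same nonnegativity of $\ell$.
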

\begin{proof}
	Let $x \in \widehat{S}(\psi)$. By definition we have $\langle p, x \rangle \geq \psi(p)$ for $p \in \Delta_n$, and by continuity the inequality extends to $\overline{\Delta}_n$. Taking the infimum over $x$ we have
	\begin{equation} \label{eqn:support.function}
	\inf_{x \in \widehat{S}(\psi)} \langle p,x \rangle \geq \psi(p), \quad \mbox{ for }p \in \overline{\Delta}_n. \end{equation}
	
	For the other direction, let $p \in \Delta_n$ be an interior point so that $\partial \psi(p)$ is non-empty. So there exists $x \in [0,\infty)^n$ such that $\langle q, x \rangle \geq \psi(q)$ for all $q \in \overline{\Delta}_n$ and $\langle p, x \rangle = \psi(p)$. If $x \in \mathbb{R}_{+}^n$, then $x \in \widehat{S}(\psi)$ and we have
	$$
	\inf_{y \in \widehat{S}(\psi)} \langle p,y \rangle \le \langle p,x \rangle = \psi(p).
	$$
	If not, then $x^{(r)} = x + \left(\frac{1}{r}, \ldots, \frac{1}{r}\right) \in \widehat{S}(\psi)$ for all $r \geq 1$,        and so
	$$
	\inf_{y \in \widehat{S}(\psi)} \langle p,y \rangle \le \liminf_{r \to \infty} \langle p,x^{(r)} \rangle =\langle p,x \rangle = \psi(p).
	$$
	Being the infimum of a collection of hyperplanes, $\inf_{x \in \widehat{S}(\psi)} \langle p, x \rangle$ is concave whose hypograph is closed. By \cite[Theorem 10.2]{R70}, it is continuous on $\overline{\Delta}_n$. Since $\psi \in \mathcal{C}_+$ is also continuous, the equality extends to the boundary.
\end{proof}

\subsection{Proof of Theorem \ref{thm:weak.convergence}}
Recall that $\Psi_K = K^{\frac{1}{n+\alpha}} \min_{1 \leq k \leq K} \ell_k$ is the normalized minimum of $K$ i.i.d.~hyperplanes. First we show convergence of the finite dimensional distributions.

\begin{proposition} \label{prop:finite.dimensional}
	Let $p^{(1)}, \ldots, p^{(r)}$ be a collection of distinct points in $\Delta_n$, and let $a_1, \ldots, a_r > 0$. Denote the data by $({\bf p}, {\bf a})$. Then
	\begin{equation} \label{eqn:finite.dimensional}
	\lim_{K \rightarrow \infty} \mathbb{P} \left( \Psi_K(p^{(i)}) \geq a_i, i = 1, \ldots, r\right) = \exp\left(- \int_{\widehat{R}({\bf p}, {\bf a})} h(x) dx \right),
	\end{equation}
	where $h$ is given by \eqref{eqn:origin}. Thus, the joint distribution of the random vector $\left( \Psi_K(p^{(i)}) \right)_{1 \leq i \leq r}$ converges weakly to the distribution defined by the right hand side of \eqref{eqn:finite.dimensional}.
	
	In particular, for $p \in \Delta_n$ fixed, let $\Psi(p)$ (which is a real-valued random variable) be distributed as the weak limit of $\Psi_K(p)$ as $K \rightarrow \infty$. Then the random variable $\Psi(p)^{n + \alpha}$ is exponentially distributed with rate $\int_{R(p, 1)} h(y) dy$.
\end{proposition}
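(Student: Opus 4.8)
The plan is to use independence to turn the event into a power of a single probability, rescale by the exact factor that trivializes the geometry, and then invoke the local homogeneity of $\rho$ (Lemma~\ref{lem:assumption.consequences}) to land on a Poisson-type limit. \textbf{Step 1 (reduction to a product).} Set $\kappa_K := K^{-1/(n+\alpha)}$, so that $\Psi_K = \kappa_K^{-1}\min_{k\le K}\ell_k$. The event $\{\Psi_K(p^{(i)})\ge a_i,\ i=1,\dots,r\}$ is exactly $\{\langle C_k,p^{(i)}\rangle\ge \kappa_K a_i \text{ for all } i \text{ and all } k\le K\}$, i.e.\ $\{C_k\notin \widehat R(\mathbf p,\kappa_K\mathbf a)\text{ for all }k\le K\}$. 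Since $R(p,\kappa a)=\kappa R(p,a)$ we have $\widehat R(\mathbf p,\kappa_K\mathbf a)=\kappa_K\widehat R(\mathbf p,\mathbf a)$, so by the i.i.d.\ assumption
\[
\mathbb P\bigl(\Psi_K(p^{(i)})\ge a_i\ \forall i\bigr)=(1-q_K)^K,\qquad q_K:=\mathbb P\bigl(C\in\kappa_K\widehat R(\mathbf p,\mathbf a)\bigr).
\]

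\textbf{Step 2 (scaling of $q_K$).} The set $\widehat R(\mathbf p,\mathbf a)=\bigcup_{i=1}^r R(p^{(i)},a_i)$ is a bounded open subset of $\mathbb R_+^n$, each $R(p^{(i)},a_i)$ being the simplex with vertices $0$ and $(a_i/p^{(i)}_j)e_j$. Changing variables $y=\kappa_K x$ gives $q_K=\kappa_K^{\,n}\int_{\widehat R(\mathbf p,\mathbf a)}\rho(\kappa_K x)\,dx$, and applying \eqref{eqn:h.integral} of Lemma~\ref{lem:assumption.consequences} to the bounded Borel set $A=\widehat R(\mathbf p,\mathbf a)$ yields $\kappa_K^{-\alpha}\int_{A}\rho(\kappa_K x)\,dx\to\int_A h(x)\,dx=:I\in(0,\infty)$. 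Since $\kappa_K^{\,n+\alpha}=1/K$, this says $q_K=\tfrac1K\,(I+o(1))$; this is precisely where the normalizing exponent $1/(n+\alpha)$ is forced. Hence $(1-q_K)^K\to e^{-I}$, which is \eqref{eqn:finite.dimensional}.

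\textbf{Step 3 (upgrade to weak convergence).} To conclude, I would verify that $G(\mathbf a):=\exp\bigl(-\int_{\widehat R(\mathbf p,\mathbf a)}h\bigr)$ is a continuous multivariate survival function: it is strictly decreasing in each $a_i$, tends to $1$ as $\mathbf a\to 0$, and tends to $0$ as $\min_i a_i\to\infty$ (because $\widehat R(\mathbf p,\mathbf a)$ then exhausts $\mathbb R_+^n$, and by homogeneity of $h$ together with $\alpha>-n$ one has $\int_{\mathbb R_+^n}h=\infty$); continuity in $\mathbf a$ follows from dominated convergence, since $\mathbf 1_{\widehat R(\mathbf p,\mathbf a)}$ converges pointwise Lebesgue-a.e.\ as $\mathbf a$ varies (the exceptional set lies in $\partial\widehat R$, which is Lebesgue-null) and is locally dominated by $\mathbf 1_{\widehat R(\mathbf p,\mathbf a_0+\epsilon\mathbf 1)}\in L^1(h\,dx)$ (recall $h$ is locally integrable by Lemma~\ref{lem:assumption.consequences}). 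A continuous survival function determines a probability law on $[0,\infty)^r$, and pointwise convergence of the orthant probabilities $\mathbb P(\Psi_K(p^{(i)})\ge a_i\ \forall i)\to G(\mathbf a)$ together with continuity of $G$ gives the asserted weak convergence (the survival-function analogue of the Helly--Bray/P\'olya theorem; it can also be read off the Poisson point process description of the next subsection).

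\textbf{Main obstacle.} The only genuinely delicate point is Step 2: one must observe that $\widehat R(\mathbf p,\mathbf a)$ is bounded so that Lemma~\ref{lem:assumption.consequences} applies directly to the shrinking regions $\kappa_K\widehat R(\mathbf p,\mathbf a)$, and that the Jacobian factor $\kappa_K^{\,n}$ combines with the order-$\alpha$ homogeneity to produce exactly $\kappa_K^{\,n+\alpha}=1/K$. Once this bookkeeping is in place, the Poisson limit $(1-q_K)^K\to e^{-I}$ and the passage to weak convergence are routine.
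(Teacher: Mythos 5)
Your proposal is correct and follows essentially the same route as the paper: reduce to $(1-q_K)^K$ by independence, pull out the scaling $\widehat R(\mathbf p,\kappa_K\mathbf a)=\kappa_K\widehat R(\mathbf p,\mathbf a)$, change variables so the Jacobian $\kappa_K^{\,n}$ combines with the order-$\alpha$ asymptotic homogeneity of $\rho$ (via Lemma~\ref{lem:assumption.consequences}) to give $q_K=K^{-1}\bigl(\int_{\widehat R}h+o(1)\bigr)$. Your Step 3 merely fills in the survival-function-to-weak-convergence upgrade that the paper asserts without comment, and it is sound.
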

\begin{proof}
	We have
	\begin{equation*}
	\begin{split}
	\mathbb{P} \left( \Psi_K(p^{(i)}) \geq a_i, \; i = 1, \ldots, r\right) &= \mathbb{P}\left( \min_{1 \leq k \leq K} \ell_k(p^{(i)}) \geq a_i K^{\frac{-1}{n + \alpha}} \; \forall i\right) \\
	&= \mathbb{P}\left( \ell (p^{(i)}) \geq a_i K^{ \frac{-1}{n + \alpha}} \; \forall i\right)^K.
	\end{split}
	\end{equation*}
	Note that the last inequality holds since the random vector $C$ (and hence $\ell(p) = \langle p, C \rangle $) has a density. We write
	\begin{equation} \label{eqn:finite.dimensional.dist.derivation}
	\mathbb{P}\left( \ell( p^{(i)}) \geq a_i K^{ \frac{-1}{n + \alpha}} \ \forall i\right)^K = \left(1 - \mathbb{P}\left(C \in K^{ \frac{-1}{n + \alpha}} \widehat{R}({\bf p}, {\bf a})\right)\right)^K.
	\end{equation}
	
	We claim that
	\begin{equation} \label{eqn:finite.dimensional.dist.claim}
	\lim_{K \rightarrow \infty} K \mathbb{P}\left(C \in K^{ \frac{-1}{n + \alpha}} \widehat{R}({\bf p}, {\bf a})\right) = \int_{\widehat{R}({\bf p}, {\bf a})} h(x) dx.
	\end{equation}
	Assuming \eqref{eqn:finite.dimensional.dist.claim}, we may take limit in \eqref{eqn:finite.dimensional.dist.derivation} to get
	\begin{equation*}
	\begin{split}
	\lim_{K \rightarrow \infty} \mathbb{P}\left( \Psi_K(p^{(i)}) \geq a_i, \quad i = 1, \ldots, r\right) &= \lim_{K \rightarrow \infty} \left(1 - \frac{1}{K} \int_{\widehat{R}({\bf p}, {\bf a})} h(x) dx  \right)^K \\
	&= \exp \left( -  \int_{\widehat{R}({\bf p}, {\bf a})} h(x) dx \right),
	\end{split}
	\end{equation*}
	which is the desired limit.
	
	To prove \eqref{eqn:finite.dimensional.dist.claim}, note that
	\begin{equation*}
	\begin{split}
	K \mathbb{P}\left(C \in K^{ \frac{-1}{n + \alpha}} \widehat{R}({\bf p}, {\bf a})\right) &= K \int_{ K^{ \frac{-1}{n + \alpha}} \widehat{R}({\bf p}, {\bf a}) } \rho(x) dx \\
	&= K \int_{  \widehat{R}({\bf p}, {\bf a}) } \rho(K^{ \frac{-1}{n + \alpha}} y) K^{ \frac{-n}{n + \alpha}}dy \\
	&= \int_{  \widehat{R}({\bf p}, {\bf a}) } \frac{\rho(K^{ \frac{-1}{n + \alpha}} y)}{K^{\frac{-\alpha}{n + \alpha}}} dy.
	\end{split}
	\end{equation*}
	By Lemma \ref{lem:assumption.consequences}, this converges to the integral of $h$ over  $\widehat{R}({\bf p}, {\bf a})$.
	
	Now let $p \in \Delta_n$ be given. Applying the above with $r = 1$, for $x > 0$ we have
	\[
	\mathbb{P}(\Psi(p) \geq x) = \exp\left( - \int_{R(p, x)} h(y) dy\right).
	\]
	Since $R(p, x) = x R(p, 1)$ and $h$ is homogeneous with order $\alpha$, we have
	\[
	\int_{R(p, x)} h(y) dy = x^{n + \alpha} \int_{R(p, 1)} h(y) dy.
	\]
	Thus
	\[
	\mathbb{P}(\Psi(p)^{n + \alpha} \geq x) = \mathbb{P}(\Psi(p) \ge x^{\frac{1}{n+\alpha}})= \exp \left( - x \int_{R(p, 1)} h(y) dy\right),
	\]
	i.e., $\Psi(p)^{n + \alpha}$ is exponentially distributed with rate $\int_{R(p, 1)} h(y) dy$.
\end{proof}

Let $\nu_K$ be the law of $\Psi_K$ regarded as random elements of the metric space $(\mathcal{C}, d)$. 

\begin{lemma} \label{lem:tightness}
	The sequence $\{\nu_K\}_{K \geq 1}$ is tight.
\end{lemma}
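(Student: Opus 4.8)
\emph{Proof strategy.} The plan is to deduce tightness from a uniform-in-$K$ upper bound on the single real random variable $\Psi_K(\overline e)$, which in turn will be read off from Proposition \ref{prop:finite.dimensional}. First I would observe that, by Lemma \ref{lem:compact}, the set $\mathcal K_M := \{\psi\in\mathcal C:\psi\le M\}$ is compact in $(\mathcal C,d)$ for every $M\ge 0$, so it suffices to show that for each $\varepsilon>0$ there is an $M$ with $\nu_K(\mathcal K_M)=\mathbb P(\Psi_K\le M)\ge 1-\varepsilon$ for all $K$. Next, Lemma \ref{lem:concave.bound}(ii) gives $\Psi_K(p)\le n\,\Psi_K(\overline e)$ for all $p\in\overline\Delta_n$, so $\{\Psi_K(\overline e)\le M/n\}\subseteq\{\Psi_K\le M\}$; hence it is enough to control the law of $\Psi_K(\overline e)$ on $[0,\infty)$, uniformly in $K$.

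To do so I would apply Proposition \ref{prop:finite.dimensional} with $r=1$, $p^{(1)}=\overline e$ and $a_1=a$, which yields, for each fixed $a>0$,
\[
\lim_{K\to\infty}\mathbb P\bigl(\Psi_K(\overline e)\ge a\bigr)=\exp\Bigl(-\int_{R(\overline e,a)}h(x)\,dx\Bigr)=:L(a).
\]
The key elementary point is that $L(a)\to 0$ as $a\to\infty$, i.e.\ $I(a):=\int_{R(\overline e,a)}h\to\infty$. This I would check as follows: if $x_i<a$ for every $i$ then $\langle\overline e,x\rangle=\tfrac1n\sum_i x_i<a$, so $(0,a)^n\subseteq R(\overline e,a)$; and since $h$ is homogeneous of order $\alpha$ (Lemma \ref{lem:assumption.consequences}), the substitution $x=ay$ gives $\int_{(0,a)^n}h=a^{n+\alpha}\int_{(0,1)^n}h$. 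The constant $c_0:=\int_{(0,1)^n}h$ is finite by local integrability and strictly positive (if it vanished, the same scaling identity would force $\int_{\mathbb R_+^n}h=0$, contradicting Assumption \ref{ass:density}), and $n+\alpha>0$ by Lemma \ref{lem:assumption.consequences}, so $I(a)\ge c_0\,a^{n+\alpha}\to\infty$.

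Finally I would assemble the pieces. Given $\varepsilon>0$, pick $a_0$ with $L(a_0)<\varepsilon$ and then $K_0$ so that $\mathbb P(\Psi_K(\overline e)\ge a_0)<\varepsilon$ for all $K\ge K_0$. For each of the finitely many indices $K<K_0$, the function $\Psi_K$ is $K^{1/(n+\alpha)}$ times the minimum of the affine functions $\ell_1,\dots,\ell_K$ and is therefore almost surely a bounded element of $\mathcal C$; since $\mathcal C=\bigcup_{m\ge1}\mathcal K_m$ is an increasing union of the compacts $\mathcal K_m$ (by Lemma \ref{lem:concave.bound}(ii) every $\psi\in\mathcal C$ is bounded), there is an $m_K$ with $\mathbb P(\Psi_K\in\mathcal K_{m_K})\ge1-\varepsilon$. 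Taking $M:=\max\{n a_0,\,m_1,\dots,m_{K_0-1}\}$ then gives $\nu_K(\mathcal K_M)\ge1-\varepsilon$ for all $K$, which is tightness. The only subtlety is that Proposition \ref{prop:finite.dimensional} provides convergence for each fixed $K$ but no rate uniform in $K$; the standard remedy, used above, is to absorb the first $K_0-1$ terms into $M$ by exploiting that each of those $\Psi_K$ is a.s.\ a bounded function. I expect this bookkeeping, together with verifying $I(a)\to\infty$, to be the only steps requiring attention — the rest is a direct application of Lemmas \ref{lem:compact} and \ref{lem:concave.bound}.
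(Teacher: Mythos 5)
Your proposal is correct and follows essentially the same route as the paper: reduce to tightness of the laws of $\Psi_K(\overline e)$ via the bound $\psi \le n\psi(\overline e)$ from Lemma \ref{lem:concave.bound} and the compactness criterion of Lemma \ref{lem:compact}, then invoke the one-point convergence from Proposition \ref{prop:finite.dimensional}. The only difference is that you spell out two details the paper leaves implicit — that $\int_{R(\overline e,a)}h\to\infty$ as $a\to\infty$ (so the limit law is proper) and the absorption of the finitely many indices $K<K_0$ — both of which are handled correctly.
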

\begin{proof}
	By Lemma \ref{lem:concave.bound}, we have the bound
	\begin{equation} \label{eqn:uniform.estimate}
	\psi(p) \leq  n\psi(\overline{e})
	\end{equation}
	which holds for any $\psi \in \mathcal{C}$.
	
	Let $\epsilon > 0$ be given. By \eqref{eqn:finite.dimensional}, the family of univariate distributions corresponding to $\{ \Psi_K(\overline{e})\}_{K \geq 1}$ is tight. Thus, there exists $M > 0$ such that
	\[
	\mathbb{P}( \Psi_K(\overline{e}) \leq M) \geq 1 - \epsilon, \quad \forall  \; K \geq 1.
	\]
	
	By Lemma \ref{lem:compact}, the set $\mathcal{K} = \{\psi \in \mathcal{C}: \psi \leq n M\}$ is compact in $\mathcal{C}$. Using the uniform estimate \eqref{eqn:uniform.estimate}, for any $K \geq 1$ we have
	\[
	\nu_K(\mathcal{K}) \geq \mathbb{P}(\Psi_K(\overline{e}) \leq M) \geq 1 - \epsilon.
	\]
	This establishes the tightness of $\{\nu_K\}_{K \geq 1}$.
\end{proof}

Now we are ready to prove Theorem \ref{thm:weak.convergence}.

\begin{proof}[Proof of Theorem \ref{thm:weak.convergence}]
	By Lemma \ref{lem:tightness} and Prokhorov's theorem, the sequence $\{\nu_K\}$ is relatively compact in the topology of weak convergence. This means that for any subsequence $\{\nu_{K'}\}$ of $\{\nu_K\}$, there exists a further subsequence $\{\nu_{K''}\}$ that converges weakly to some probability measure, say $\nu^*$, on $\mathcal{C}$. However, by Proposition \ref{prop:finite.dimensional} the finite dimensional distributions of $\nu^*$ are given by the right hand side of \eqref{eqn:finite.dimensional} which does not depend on the subsequence chosen. Thus by Lemma \ref{lem:unique} there is a unique weak limit point. Consequently, the original sequence $\{\nu_K\}$ converges weakly to $\mu = \nu^*$ whose finite dimensional marginals are given by the right hand side of \eqref{eqn:finite.dimensional}. 
	
	It remains to verify that $\mu$ is supported on $\mathcal{C}_+$, the subset of functions in $\mathcal{C}$ that are strictly positive on $\Delta_n$. By Lemma \ref{lem:concave.bound}, if $\psi \in \mathcal{C}$ is positive at some $p \in \Delta_n$, then $\psi(q) > 0$ for all $q \in \Delta_n$. From \eqref{eqn:finite.dimensional} it is clear that if $\Psi \sim \mu$ then $\Psi(p) > 0$ with probability $1$ for any $p \in \Delta_n$. This implies that $\mu(\mathcal{C}_+) = 1$ and the theorem is proved.
\end{proof}

\subsection{Tail probability}
Consider the limiting distribution $\mu$ given in Theorem \ref{thm:weak.convergence}. Proposition \ref{prop:finite.dimensional} characterizes the finite dimensional distributions of $\mu$. Now we extend this result to the tail probability defined as follows.

\begin{definition} \label{def:tail.probability}
	Given a Borel probability measure $\nu$ on $\mathcal{C}$, we define its tail probability as the functional $\mathsf{T}_{\nu}: \mathcal{C} \rightarrow [0, 1]$ defined by
	\begin{equation} \label{eqn:tail.probability}
	\mathsf{T}_{\nu}(\psi) := \nu\{ \omega \in \mathcal{C} : \omega \geq \psi\}, \quad \psi \in \mathcal{C}.
	\end{equation}
\end{definition}

We show that the tail probability $\mathsf{T}_{\nu}$ characterizes $\nu$. Recall from \eqref{eqn:psi.p.a} that for a collection of points ${\bf p} = (p^{(1)}, \ldots, p^{(r)})$ in $\Delta_n$ and ${\bf a} = (a_1, \ldots, a_r)$ in $(0, \infty)$, $\psi_{{\bf p}, {\bf a}}$ is the smallest concave function such that $\psi(p^{(i)}) \geq a_i$ for $i = 1, \ldots, r$.

\begin{lemma} \label{lem:tail.finite.dim}
	Let $\nu$ be a probability measure on $\mathcal{C}$. Given ${\bf p}$ and ${\bf a}$ as above, we have
	\[
	\nu\{\omega: \omega(p^{(i)}) \geq a_i\} = \nu\{\omega: \omega \geq \psi_{{\bf p}, {\bf a}}\} = \mathsf{T}_{\nu}(\psi_{{\bf p}, {\bf a}}).
	\]
	Consequently the tail probability $\mathsf{T}_{\nu}$ fully characterizes the measure $\nu$.
\end{lemma}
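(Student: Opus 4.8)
The plan is to reduce both displayed equalities to a single set identity in $\mathcal{C}$, namely
\[
\{\omega \in \mathcal{C} : \omega(p^{(i)}) \ge a_i,\ i = 1,\ldots,r\} \;=\; \{\omega \in \mathcal{C} : \omega \ge \psi_{{\bf p},{\bf a}}\},
\]
and then handle the characterization via a $\pi$-system argument. For the inclusion ``$\subseteq$'', if $\omega \in \mathcal{C}$ satisfies $\omega(p^{(i)}) \ge a_i$ for every $i$, then $\omega$ lies in the family over which the infimum in \eqref{eqn:psi.p.a} is taken, so $\omega \ge \psi_{{\bf p},{\bf a}}$ pointwise. For ``$\supseteq$'' I would first record that $\psi_{{\bf p},{\bf a}}(p^{(i)}) \ge a_i$: since $\psi_{{\bf p},{\bf a}}$ is a pointwise infimum, $\psi_{{\bf p},{\bf a}}(p^{(i)}) = \inf\{\psi(p^{(i)}) : \psi \in \mathcal{C},\ \psi(p^{(j)}) \ge a_j\ \forall j\}$, and every admissible $\psi$ already satisfies $\psi(p^{(i)}) \ge a_i$ (the admissible family being nonempty, e.g.\ it contains the constant function $\max_j a_j$). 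Hence $\omega \ge \psi_{{\bf p},{\bf a}}$ forces $\omega(p^{(i)}) \ge \psi_{{\bf p},{\bf a}}(p^{(i)}) \ge a_i$. Here I would also note that $\psi_{{\bf p},{\bf a}} \in \mathcal{C}$ — it is non-negative and concave as a pointwise infimum of non-negative concave functions, and continuous on $\overline{\Delta}_n$ because it is polyhedral (a minimum of finitely many hyperplanes), as already observed after \eqref{eqn:psi.p.a} — so that $\mathsf{T}_{\nu}(\psi_{{\bf p},{\bf a}})$ is meaningful.

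Next I would check measurability so that applying $\nu$ is legitimate. The set $\{\omega : \omega(p^{(i)}) \ge a_i,\ i \le r\}$ is a finite-dimensional cylinder set, hence in $\mathcal{B}$; and for any $\psi \in \mathcal{C}$ one has $\{\omega : \omega \ge \psi\} = \bigcap_{q \in D}\{\omega : \omega(q) \ge \psi(q)\}$, where $D$ is a countable dense subset of $\Delta_n$ (using continuity of $\omega$ and $\psi$ on $\Delta_n$), which is a countable intersection of cylinder sets and therefore in $\mathcal{B}$. Applying $\nu$ to the set identity then gives $\nu\{\omega : \omega(p^{(i)}) \ge a_i\} = \nu\{\omega : \omega \ge \psi_{{\bf p},{\bf a}}\} = \mathsf{T}_{\nu}(\psi_{{\bf p},{\bf a}})$.

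For the characterization, let $\mathcal{P}$ be the collection of all sets of the form $\{\omega : \omega(p^{(i)}) \ge a_i,\ i \le r\}$ with $r \ge 1$, distinct $p^{(1)},\ldots,p^{(r)} \in \Delta_n$, and $a_1,\ldots,a_r > 0$. This is a $\pi$-system: intersecting two such sets yields another one after merging the lists of base points and, at any shared point, replacing the two thresholds by their maximum. Moreover $\sigma(\mathcal{P}) = \mathcal{B}$: indeed $\mathcal{B}$ is generated by the finite-dimensional cylinder sets, and since $\omega \ge 0$ it suffices, for fixed base points $p^{(1)},\ldots,p^{(r)}$, to realize every cylinder $\{\omega : (\omega(p^{(1)}),\ldots,\omega(p^{(r)})) \in B\}$ with $B \in \mathcal{B}([0,\infty)^r)$ as an element of $\sigma(\mathcal{P})$; the $B$'s for which this holds form a $\sigma$-algebra containing the half-closed orthants $\prod_i [a_i,\infty)$, $a_i > 0$, which generate $\mathcal{B}([0,\infty)^r)$. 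Now if $\nu$ and $\widetilde{\nu}$ are probability measures on $\mathcal{C}$ with $\mathsf{T}_{\nu} = \mathsf{T}_{\widetilde{\nu}}$, then by the first part they agree on every member of $\mathcal{P}$, hence by Dynkin's $\pi$--$\lambda$ theorem on $\sigma(\mathcal{P}) = \mathcal{B}$, i.e.\ $\nu = \widetilde{\nu}$. (Alternatively, one can route this last step through Lemma \ref{lem:unique}, since the orthant probabilities just identified pin down all finite-dimensional distributions of $\nu$.)

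I expect the set identity and the passage to $\nu$-measures to be straightforward; the two points that will need care are (i) justifying $\psi_{{\bf p},{\bf a}} \in \mathcal{C}$ and $\psi_{{\bf p},{\bf a}}(p^{(i)}) \ge a_i$, which both hinge on $\psi_{{\bf p},{\bf a}}$ being a genuine element of $\mathcal{C}$ realized as a pointwise infimum (and polyhedral), and (ii) the slightly fiddly bookkeeping showing that the orthant $\pi$-system generates the Borel $\sigma$-algebra of $\mathcal{C}$ — routine, but the place a careful reader will want the details spelled out.
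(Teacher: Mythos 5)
Your argument is correct and is essentially the paper's: both directions of the set identity $\{\omega: \omega(p^{(i)}) \ge a_i \ \forall i\} = \{\omega: \omega \ge \psi_{{\bf p},{\bf a}}\}$ are proved exactly as in the text (membership of $\omega$ in the defining family for one inclusion, $\psi_{{\bf p},{\bf a}}(p^{(i)}) \ge a_i$ for the other), and the characterization is reduced to finite-dimensional distributions; the paper simply cites Lemma \ref{lem:unique} at that point, whereas you unpack it into a $\pi$-system argument. Your extra care about measurability and about $\psi_{{\bf p},{\bf a}}$ actually belonging to $\mathcal{C}$ and attaining the constraints is welcome detail that the paper omits. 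One inaccuracy in the unpacking: the orthants $\prod_{i}[a_i,\infty)$ with $a_i>0$ do \emph{not} generate $\mathcal{B}([0,\infty)^r)$ — every generator is disjoint from the boundary $\{x: \min_i x_i = 0\}$, so the generated $\sigma$-algebra cannot separate boundary points (your union-over-$m$ trick only recovers $[a_1,\infty)\times(0,\infty)^{r-1}$). This is harmless here, but it needs a word: by Lemma \ref{lem:concave.bound}(i), a function in $\mathcal{C}$ vanishing at one interior point vanishes identically on $\Delta_n$, so the pushforward of $\nu$ under $\omega\mapsto(\omega(p^{(i)}))_i$ charges the boundary of $[0,\infty)^r$ only at the origin, and on $(0,\infty)^r\cup\{0\}$ the orthants do generate the trace Borel $\sigma$-algebra. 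With that remark (which is equally needed, though unstated, in the paper's appeal to Lemma \ref{lem:unique}), your proof is complete.
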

\begin{proof}
	By definition, if $\omega \geq \psi_{{\bf p}, {\bf a}}$ then clearly $\omega(p^{(i)}) \geq a_i$ for all $i$, and so
	\[
	\nu\{\omega: \omega(p^{(i)}) \geq a_i\} \geq \mathsf{T}_{\nu}(\psi_{{\bf p}, {\bf a}}).
	\]
	On the other hand, if $\omega \in \mathcal{C}$ is such that $\omega(p^{(i)}) \geq a_i$ for all $i$, then by concavity of $\omega$ we have $\omega \geq \psi_{{\bf p}, {\bf a}}$. This gives the reverse inequality. The last assertion follows from Lemma \ref{lem:unique}.
\end{proof}

Now we characterize the tail probability functional of the limiting measure $\mu$. Recall that for $\psi \in \mathcal{C}$ we define $\widehat{R}(\psi) = \bigcup_{p \in \Delta_n} R(p, \psi(p)) \subset \mathbb{R}_+^n$.

\begin{theorem} \label{thm:tail.probability}
	Consider the limiting measure $\mu$ as in Theorem \ref{thm:weak.convergence}. We have
	\begin{equation} \label{eqn:tail.limit}
	\mathsf{T}_{\mu}(\psi) = \exp \left( - \int_{\widehat{R}(\psi)} h(x) dx \right), \quad \text{for } \psi \in \mathcal{C}.
	\end{equation}
\end{theorem}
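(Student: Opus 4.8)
The plan is to reduce the general statement to the polyhedral case already settled by Proposition~\ref{prop:finite.dimensional}, by exhausting the tail event $\{\omega\ge\psi\}$ with a decreasing sequence of finite-dimensional cylinder events and combining continuity of $\mu$ from above with monotone convergence on the integral side.

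Fix a countable dense subset $D=\{q^{(1)},q^{(2)},\dots\}$ of $\Delta_n$. For $r\ge 1$ put ${\bf p}_r=(q^{(1)},\dots,q^{(r)})$, ${\bf a}_r=(\psi(q^{(1)}),\dots,\psi(q^{(r)}))$, and let $\psi_r:=\psi_{{\bf p}_r,{\bf a}_r}$ be as in \eqref{eqn:psi.p.a}. First I would note that, since every $\omega\in\mathcal{C}$ is continuous on $\overline{\Delta}_n$ (using \cite[Theorem 10.3]{R70}) and $D$ is dense in $\Delta_n$, one has $\omega\ge\psi$ on $\overline{\Delta}_n$ if and only if $\omega(q^{(j)})\ge\psi(q^{(j)})$ for all $j$; hence
\[
\{\omega\in\mathcal{C}:\omega\ge\psi\}=\bigcap_{r\ge 1}\{\omega\in\mathcal{C}:\omega(q^{(j)})\ge\psi(q^{(j)}),\ j=1,\dots,r\},
\]
a decreasing intersection of (Borel) finite-dimensional cylinder sets, each of which by concavity coincides with $\{\omega\ge\psi_r\}$ (cf.\ the lemma preceding this theorem). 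Continuity of $\mu$ from above then gives $\mathsf{T}_\mu(\psi)=\lim_{r\to\infty}\mu\{\omega(q^{(j)})\ge\psi(q^{(j)}),\ j\le r\}$.

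Next I would evaluate each term. By Proposition~\ref{prop:finite.dimensional} (which, together with the proof of Theorem~\ref{thm:weak.convergence}, identifies the finite-dimensional distributions of $\mu$),
\[
\mu\{\omega(q^{(j)})\ge\psi(q^{(j)}),\ j=1,\dots,r\}=\exp\!\left(-\int_{\widehat{R}({\bf p}_r,{\bf a}_r)}h(x)\,dx\right),
\]
where $\widehat{R}({\bf p}_r,{\bf a}_r)=\bigcup_{j=1}^{r}R(q^{(j)},\psi(q^{(j)}))$ by \eqref{eqn:R.hat}. These sets increase in $r$, and I claim $\bigcup_r\widehat{R}({\bf p}_r,{\bf a}_r)=\widehat{R}(\psi)$: the inclusion $\subseteq$ is immediate from \eqref{eqn:R.hat.general}, and for $\supseteq$, if $x\in R(p,\psi(p))$ for some $p\in\Delta_n$ then $q\mapsto\psi(q)-\langle q,x\rangle$ is continuous and strictly positive at $p$, hence positive on a neighbourhood of $p$, which meets $D$, so $x\in R(q^{(j)},\psi(q^{(j)}))$ for some $j$. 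Since each $\widehat{R}({\bf p}_r,{\bf a}_r)$ is a finite union of simplices (hence bounded) and $h$ is locally integrable by Lemma~\ref{lem:assumption.consequences}, the approximating integrals are finite; by monotone convergence $\int_{\widehat{R}({\bf p}_r,{\bf a}_r)}h\to\int_{\widehat{R}(\psi)}h$ in $[0,\infty]$. As $t\mapsto e^{-t}$ is continuous on $[0,\infty]$ with the convention $e^{-\infty}:=0$, combining the two displays yields $\mathsf{T}_\mu(\psi)=\exp(-\int_{\widehat{R}(\psi)}h\,dx)$; when $\widehat{R}(\psi)$ is unbounded with $\int_{\widehat{R}(\psi)}h=\infty$ this reads $\mathsf{T}_\mu(\psi)=0$, consistently.

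The only genuinely nonroutine point is the set identity $\bigcup_r\widehat{R}({\bf p}_r,{\bf a}_r)=\widehat{R}(\psi)$ — that the dense family of hyperplane regions $R(q^{(j)},\psi(q^{(j)}))$ already exhausts $\widehat{R}(\psi)$ — which I would handle carefully via the continuity of $\psi$ on the open simplex. Everything else (measurability of the cylinder events, the reduction of $\{\omega\ge\psi\}$ to the dense set, and the identity $\widehat{R}({\bf p}_r,{\bf a}_r)=\widehat{R}(\psi_r)$ coming from the duality discussion around \eqref{eqn:psi.p.a}) is routine bookkeeping.
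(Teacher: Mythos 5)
Your proposal is correct and follows essentially the same route as the paper's proof: both reduce to the finite-dimensional case of Proposition~\ref{prop:finite.dimensional} via a countable dense set, establish the set identity $\bigcup_{j} R(q^{(j)},\psi(q^{(j)}))=\widehat{R}(\psi)$ by a continuity argument, and conclude by monotone convergence. The only differences are cosmetic (you argue the set identity directly rather than by contradiction, and you spell out the $\int_{\widehat{R}(\psi)}h=\infty$ case explicitly).
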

\begin{proof}
	By Proposition \ref{prop:finite.dimensional} and Lemma \ref{lem:tail.finite.dim}, we have that \eqref{eqn:tail.limit} holds whenever $\psi = \psi_{{\bf p}, {\bf a}}$ for some ${\bf p}$ and ${\bf a}$. We now extend this identity to an arbitrary $\psi \in \mathcal{C}$.
	
	Let $D = \{p^{(1)}, p^{(2)}, \ldots\}$ be a countable dense set of $\Delta_n$. By continuity of $\omega \in \mathcal{C}$, we have
	\[
	\{\omega : \omega \geq \psi\} = \bigcap_{r = 1}^{\infty} \{\omega: \omega(p^{(i)}) \geq \psi(p^{(i)}) \text{ for } 1 \leq i \leq r\}.
	\]
	Since the events on the right hand side are decreasing in $r$, we have
	\begin{eqnarray}
	\mathbb{P}\{\omega: \omega \geq \psi\} &=& \lim_{r \rightarrow \infty} \mathbb{P}\{\omega : \omega(p^{(i)}) \geq \psi(p^{(i)}) \text{ for } 1 \leq i \leq r\} \\
	&=& \lim_{r \rightarrow \infty} \exp\left(- \int_{\widehat{R}_{r}} h(x) dx \right),
	\end{eqnarray}
	where $\widehat{R}_{r}$ is the $\widehat{R}({\bf p}, {\bf a})$ generated by $p^{(1)}, \ldots, p^{(r)}$ and the values of $\psi$ at these points.
	
	It remains to show that $\lim_{r \rightarrow \infty} \int_{\widehat{R}_{r}} h(x) dx = \int_{\widehat{R}(\psi)} h(x) dx$. Indeed, we have
	\begin{equation} \label{eqn:set.limit}
	\widehat{R}(\psi) = \bigcup_{i = 1}^{\infty} R(p^{(i)}, \psi(p^{(i)})).
	\end{equation}
	To see this, suppose by way of contradiction that there exists an element $x$ of $\widehat{R}(\psi) \setminus \bigcup_{i = 1}^{\infty} R(p^{(i)}, \psi(p^{(i)}))$. Then there exists $p \in \Delta_n$ such that $\langle p, x \rangle < \psi(p)$ but $\langle p', x\rangle \geq \psi(p')$ for all $p' \in D$. This is clearly a contradiction since we can approach $p$ by a sequence of points $p'$ in $D$. Now \eqref{eqn:set.limit}  and the monotone convergence theorem imply that $\lim_{r \rightarrow \infty} \int_{\widehat{R}_{r}} h(x) dx = \int_{\widehat{R}(\psi)} h(x) dx$.
\end{proof}



\subsection{Realization of the weak limit by Poisson point processes} \label{sec:poisson}
In Theorem \ref{thm:tail.probability} we may interpret the Borel set function
\begin{equation} \label{eqn:intensity.measure}
A \mapsto \int_{A} h(x) dx
\end{equation}
as a Radon measure $m$ on $\mathbb{R}^n$, so that the integral in \eqref{eqn:tail.limit} is equal to $m(\widehat{R}(\psi))$. More generally, given an arbitrary Radon measure $m$ on $\mathbb{R}^n_+$, can we construct a probability measure $\nu$ on $\mathcal{C}$ whose tail probability is given by
\begin{equation} \label{eqn:new.tail.probability}
\mathsf{T}_{\nu}(\psi) = \exp \left( - m(\widehat{R}(\psi)\right)?
\end{equation}
We show that this can be achieved by a Poisson point process. This gives a direct probabilistic construction of the limiting distribution $\mu$ in Theorem \ref{thm:weak.convergence} without going through the limiting process, and suggests an algorithm for simulating samples from $\mu$. We leave practical implementation (possibly tailored to the financial applications) as a problem for future research.

We begin by recalling the defining property of a Poisson point process (for details see \cite{K92, R87}). Let $m$ be a Radon measure on $\mathbb{R}^n_+$. A Poisson point process with intensity measure $m$ is a random closed set $N$ such that for any bounded Borel set $A \subset \mathbb{R}^n_+$, the random variable $|N \cap A|$ (here $|\cdot|$ denotes the cardinality of a set) follows the Poisson distribution with rate $m(A)$, and if $A_1, \ldots, A_m$ are disjoint, then $|N \cap A_1|, \ldots, |N \cap A_m|$ are independent.

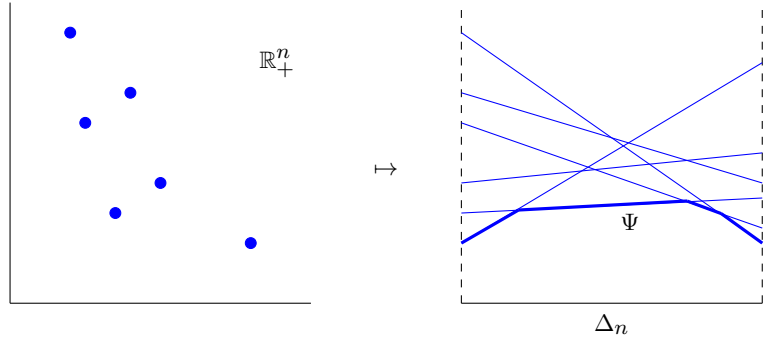
\begin{figure}[t]
	\begin{tikzpicture}[scale = 0.4]
	
	\draw (0, 0) to (0, 10);
	\draw (0, 0) to (10, 0);
	
	
	\draw[blue, fill = blue, radius = 0.18] (3.5, 3) circle;
	\draw[blue, fill = blue, radius = 0.18] (4, 7) circle;
	\draw[blue, fill = blue, radius = 0.18] (2.5, 6) circle;
	\draw[blue, fill = blue, radius = 0.18] (8, 2) circle;
	\draw[blue, fill = blue, radius = 0.18] (2, 9) circle;
	\draw[blue, fill = blue, radius = 0.18] (5, 4) circle;

	\node[black, right] at (8,8) {{\footnotesize $\mathbb{R}^n_{+}$}};
	
	\node[black, above] at (12.5, 4) {{\footnotesize $\mapsto$}};
	
	\draw (15, 0) to (25, 0);
	\draw[dashed] (15, 0) to (15, 10);
	\draw[dashed] (25, 0) to (25, 10);
	
	\draw[blue] (15, 3) -- (25, 3.5);
	\draw[blue] (15, 7) -- (25, 4);
	\draw[blue] (15, 6) -- (25, 2.5);
	\draw[blue] (15, 2) -- (25, 8);
	\draw[blue] (15, 9) -- (25, 2);
	\draw[blue] (15, 4) -- (25, 5);
	
	\draw[blue, very thick] (15, 2) -- (16.9, 3.1);
	\draw[blue, very thick] (16.9, 3.1) -- (22.5, 3.4);
	\draw[blue, very thick] (22.5, 3.4) -- (23.64, 2.98);
	\draw[blue, very thick] (23.64, 2.98) -- (25, 2);
	
	\node[black, right] at (20,2.7) {{\footnotesize $\Psi$}};
	
	\node[black, below] at (20,-0.1) {{\footnotesize $\Delta_n$}};
	\end{tikzpicture} \caption{A realization of the point process $N$ defines a non-negative concave function via the correspondence between $\mathbb{R}_{+}^n$ and $\mathcal{A}_+$.} \label{fig:point.process}
\end{figure}

Recall from Section \ref{sec:notations} that any point of $\mathbb{R}_{+}^n$ can be identified with a positive affine function on $\Delta_n$. Given the random set $N$, we can define a random concave function $\Psi$ by
\begin{equation} \label{eqn:point.to.psi}
\Psi(p) = \inf_{x \in N} \langle p, x \rangle, \quad p \in \Delta_n.
\end{equation}
See Figure \ref{fig:point.process} for an illustration of this construction. It is easy to see that, as long as $N$ is locally finite, the random concave function $\Psi$ is locally piecewise affine on $\Delta_n$. Note that this property may not hold in the diagonal softmin case studied in Section \ref{sec:diagonal}; see in particular Figure \ref{fig:diagonal} where the realizations are almost surely differentiable on $\Delta_n$.

\begin{remark}
	It is clear that \eqref{eqn:point.to.psi} makes sense for any point process (not necessarily Poisson). However, at this generality it is difficult to obtain concrete results about the resulting distribution apart from the tail probability.
\end{remark}

\begin{theorem} \label{thm:Poisson}
	Let $\nu$ be the distribution of the random concave function $\Psi$ defined in \eqref{eqn:point.to.psi}, where $N$ is a Poisson point process on $\mathbb{R}^n_+$ with intensity measure $m$. Then its tail probability functional is given by \eqref{eqn:new.tail.probability}. In particular, the limiting distribution $\mu$ in Theorem \ref{thm:weak.convergence} is the special case where the intensity measure $m$ has density $h$ with respect to Lebesgue measure.
\end{theorem}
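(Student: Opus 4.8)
The plan is to reduce the statement to the defining void-probability property of a Poisson point process, read through the identification (from Section~\ref{sec:notations}) of a point $x\in\mathbb{R}^n_+$ with the positive affine function $p\mapsto\langle p,x\rangle$.

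\textbf{Step 1: an event identity.} Fix $\psi\in\mathcal{C}$. I would first check that, as events,
\[ \{\Psi\ge\psi\}=\{N\cap\widehat{R}(\psi)=\emptyset\}. \]
This is a direct unwinding of the definitions: $\Psi(p)=\inf_{x\in N}\langle p,x\rangle\ge\psi(p)$ for every $p\in\Delta_n$ if and only if $\langle p,x\rangle\ge\psi(p)$ for every $p\in\Delta_n$ and every $x\in N$, i.e.\ if and only if no point of $N$ lies in any slab $R(p,\psi(p))$, which by \eqref{eqn:R.hat.general} is precisely $N\cap\widehat{R}(\psi)=\emptyset$. Before doing this one should record that $\Psi$ is genuinely a $\mathcal{C}$-valued random element: when $m$ has infinite total mass, $\mathbb{P}(N=\emptyset)=0$ (apply the void-probability identity of Step~2 to bounded sets increasing to $\mathbb{R}^n_+$), so $\Psi$ is finite, non-negative and concave on $\Delta_n$, hence continuous there and extending continuously to $\overline{\Delta}_n$ by \cite[Theorem 10.3]{R70}; and $\omega\mapsto\Psi$ is measurable because the evaluation maps are and $\mathcal{B}$ is generated by cylinder sets (cf.\ the discussion around Lemma~\ref{lem:unique}).

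\textbf{Step 2: the void probability.} By the proof of Theorem~\ref{thm:tail.probability}, $\widehat{R}(\psi)=\bigcup_{i\ge1}R(p^{(i)},\psi(p^{(i)}))$ for a countable dense set $\{p^{(i)}\}\subset\Delta_n$, so $\widehat{R}(\psi)$ is Borel and the finite unions $\widehat{R}_r:=\bigcup_{i\le r}R(p^{(i)},\psi(p^{(i)}))$ are bounded Borel sets with $\widehat{R}_r\uparrow\widehat{R}(\psi)$. Since $\{N\cap\widehat{R}_r=\emptyset\}$ decreases to $\{N\cap\widehat{R}(\psi)=\emptyset\}$,
\[ \mathbb{P}\bigl(N\cap\widehat{R}(\psi)=\emptyset\bigr)=\lim_{r\to\infty}\mathbb{P}\bigl(N\cap\widehat{R}_r=\emptyset\bigr)=\lim_{r\to\infty}e^{-m(\widehat{R}_r)}=e^{-m(\widehat{R}(\psi))}, \]
where the middle step is the Poisson property ($|N\cap\widehat{R}_r|\sim\mathrm{Poisson}(m(\widehat{R}_r))$, hence vanishes with probability $e^{-m(\widehat{R}_r)}$) and the last step is continuity from below of $m$ along $\widehat{R}_r\uparrow\widehat{R}(\psi)$, with the convention $e^{-\infty}:=0$. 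Combined with Step~1 this gives $\mathsf{T}_\nu(\psi)=\mathbb{P}(\Psi\ge\psi)=\exp(-m(\widehat{R}(\psi)))$, which is \eqref{eqn:new.tail.probability}.

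\textbf{Step 3: identifying $\mu$, and the main difficulty.} For the last assertion, if $dm=h(x)\,dx$ then $m(\widehat{R}(\psi))=\int_{\widehat{R}(\psi)}h(x)\,dx$, so $\mathsf{T}_\nu(\psi)=\exp(-\int_{\widehat{R}(\psi)}h\,dx)$ for all $\psi\in\mathcal{C}$, which is exactly $\mathsf{T}_\mu(\psi)$ by Theorem~\ref{thm:tail.probability}; since the tail probability functional characterizes a Borel probability measure on $\mathcal{C}$ (established just above Theorem~\ref{thm:tail.probability}), $\nu=\mu$. I do not expect a genuine obstacle here: the content is the translation in Step~1, read through the point/hyperplane duality, followed by the routine exhaustion argument computing the Poisson void probability of a possibly unbounded Borel set. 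The only points needing care are precisely the bookkeeping items flagged in Step~1 --- that $\Psi$ is a bona fide $\mathcal{C}$-valued variable and that $\widehat{R}(\psi)$ is Borel --- together with the harmless degenerate case $m(\widehat{R}(\psi))=\infty$.
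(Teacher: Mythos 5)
Your proof is correct and follows the same route as the paper's: the event identity $\{\Psi\ge\psi\}=\{N\cap\widehat{R}(\psi)=\emptyset\}$ followed by the Poisson void probability. The paper simply asserts $\mathbb{P}(N\cap\widehat{R}(\psi)=\emptyset)=e^{-m(\widehat{R}(\psi))}$ without the exhaustion over bounded sets or the measurability bookkeeping you supply, so your version is a more carefully justified rendering of the same argument.
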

\begin{proof}
	Let $\psi \in \mathcal{C}$ be given and consider the event $\{\Psi \geq \psi\}$. Note that its complement $\{\Psi \geq \psi\}^c$ occurs if and only if there exists $p \in \Delta_n$ such that
	\[
	\Psi(p) = \inf_{x \in N} \langle p, x \rangle < \psi(p),
	\]
	i.e., $N \cap \widehat{R}(\psi) \neq \emptyset$. It follows that
	\[
	\mathbb{P}\left(\Psi \geq \psi\right) = \mathbb{P}\left(N \cap \widehat{R}(\psi) = \emptyset\right) = e^{-m(\widehat{R}(\psi))}
	\]
	and the theorem is proved.
\end{proof}

\begin{remark} \label{rmk:EVT2}
Poisson point processes arise naturally in characterizations of extreme value distributions. In \cite[Theorem 2.4]{GHV90}, a representation of max-infinitely divisible and sample continuous processes (which arise as limits of pointwise maxima of i.i.d.~processes) on a compact metric space is given in terms of Poisson point processes. In this general result, the point process lives on the function space itself, and the Poisson representation is not easy to apply directly. Using the duality in Section \ref{sec:duality}, we are able to obtain a clean and explicit probabilistic representation of the limiting distribution $\mu$. 
\end{remark}

\section{Further properties of the limiting distribution}  \label{sec:further.properties}
Consider the limiting distribution $\mu$ in Theorem \ref{thm:weak.convergence}. Equivalently, we may consider the distribution constructed in Theorem \ref{thm:Poisson} where the intensity measure has the form \eqref{eqn:intensity.measure} and $h$ satisfies the conditions of Lemma \ref{lem:assumption.consequences}. In this section we develop further properties of $\mu$, sometimes under additional conditions. Throughout this section we let $\Psi$ be a random element in $\mathcal{C}$ with distribution $\mu$.

\medskip

\subsection{Exponential distribution and the geometric mean}
In Proposition \ref{prop:finite.dimensional} we have shown that $\Phi^{n + \alpha}(p)$ is exponentially distributed for $p \in \Delta_n$ fixed.  A direct calculation using the exponential distribution gives a formula for the expected value of $\Psi(\cdot)$.

%

\begin{corollary} \label{cor:one.point}  For any $p \in  \Delta_n$, we have
	\begin{equation} \label{eqn:expectation}
	\mathbb{E}[\Psi(p)] = \Gamma\left(\frac{1}{n+\alpha} +1\right)\left(\int_{R(p,1)}h(y)dy\right)^{-\frac{1}{n+\alpha}},
	\end{equation}
	where $\Gamma(\cdot)$ is the gamma function.
\end{corollary}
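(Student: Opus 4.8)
The plan is to read off the result directly from Proposition~\ref{prop:one.point} via the standard formula for fractional moments of an exponential random variable. Fix $p \in \Delta_n$ and write $\lambda_p := \int_{R(p,1)} h(y)\,dy$, which is a finite positive constant (finiteness follows from the local integrability of $h$ in Lemma~\ref{lem:assumption.consequences} together with the boundedness of $R(p,1)$, and positivity from $\int_{\mathbb{R}_+^n} h > 0$ combined with homogeneity). By Proposition~\ref{prop:one.point}, the random variable $Z := \Psi(p)^{n+\alpha}$ is exponentially distributed with rate $\lambda_p$; note also that $n+\alpha > 0$ since $\alpha > -n$ by Lemma~\ref{lem:assumption.consequences}, so $\Psi(p) = Z^{1/(n+\alpha)}$ with a positive exponent.

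The next step is the elementary computation
\[
\mathbb{E}[\Psi(p)] = \mathbb{E}\bigl[Z^{1/(n+\alpha)}\bigr] = \int_0^\infty z^{\frac{1}{n+\alpha}} \lambda_p e^{-\lambda_p z}\,dz.
\]
Substituting $u = \lambda_p z$ turns the right-hand side into $\lambda_p^{-\frac{1}{n+\alpha}} \int_0^\infty u^{\frac{1}{n+\alpha}} e^{-u}\,du = \lambda_p^{-\frac{1}{n+\alpha}}\,\Gamma\!\left(\tfrac{1}{n+\alpha}+1\right)$, which is exactly the claimed formula once we recall the definition of $\lambda_p$. Measurability of $\psi \mapsto \psi(p)$ on $(\mathcal{C},\mathcal{B})$ — needed for the expectation to make sense — is immediate since point evaluations generate $\mathcal{B}$ (see the discussion around Lemma~\ref{lem:unique}).

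There is essentially no obstacle here; the only points worth a sentence of care are (i) confirming $n+\alpha>0$ so that the $(n+\alpha)$-th root is well defined and the integral converges at both endpoints, and (ii) justifying $0 < \lambda_p < \infty$, both of which are already contained in Lemma~\ref{lem:assumption.consequences} and the homogeneity of $h$. One could alternatively phrase the argument via the general identity $\mathbb{E}[Z^s] = \Gamma(s+1)/\lambda^s$ for $Z \sim \mathrm{Exp}(\lambda)$ and $s > -1$, applied with $s = 1/(n+\alpha)$, but writing out the short integral substitution keeps the corollary self-contained.
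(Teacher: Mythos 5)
Your proposal is correct and matches the paper's approach: the paper derives the corollary as a "direct calculation using the exponential distribution" from Proposition~\ref{prop:one.point}, which is precisely the moment computation $\mathbb{E}[Z^{1/(n+\alpha)}]=\Gamma\bigl(\tfrac{1}{n+\alpha}+1\bigr)\lambda_p^{-1/(n+\alpha)}$ you carry out. Your added remarks on $n+\alpha>0$ and $0<\lambda_p<\infty$ are consistent with Lemma~\ref{lem:assumption.consequences} and only make the argument more self-contained.
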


\begin{remark}
	As a by-product we have the following result: for any $h$ on $\mathbb{R}^n_+$ which is homogeneous of order $\alpha$ and locally integrable, the function defined by the right hand side of \eqref{eqn:expectation} is concave. In Proposition \ref{prop:expectation.portfolio} below we compute the derivative of $\mathbb{E}[\Psi(p)]$ and the corresponding portfolio map.
\end{remark}

Under the following condition we can derive an explicit formula for the expected value of $\Psi(\cdot)$.

\begin{assumption} \label{ass:density.B}
	In Assumption \ref{ass:density} we assume that $h$ has the form
	\begin{equation} \label{eqn:h.assumption.B}
	h(x) = \gamma \prod_{i = 1}^n x_i^{\alpha_i}
	\end{equation}
	for some $\gamma > 0$ and exponents $\alpha_i > -1$ (see Example \ref{ex:density}).
\end{assumption}

Note that the homogeneity of $h$ implies that $\alpha = \sum_{i = 1}^n \alpha_i$, and we have $n+\alpha > 0$. 

\begin{proposition} \label{prop:one.point.expectation}
	Under Assumption \ref{ass:density.B}, let $\pi_i = \frac{1+ \alpha_i}{n + \alpha}$ so that $\pi = (\pi_1, \ldots, \pi_n) \in \Delta_n$. Also let
	\begin{equation} \label{eqn:const.M}
	M = \gamma \int_{R(\overline{e}, 1/n)} \prod_{i = 1}^n y_i^{\alpha_i} dy = \int_{R(\overline{e}, 1/n)} h(y) dy > 0.
	\end{equation}
	Then for $p \in \overline{\Delta}_n$ we have
	\begin{equation} \label{eqn:exp.expectation}
	\mathbb{E}\left[ \Psi(p)\right] = L p_1^{\pi_1} \cdots p_n^{\pi_n},
	\end{equation}
	where $L =\left.\Gamma\left(\frac{1}{n + \alpha}+1\right)\right/M^{1/(n + \alpha)}$.
\end{proposition}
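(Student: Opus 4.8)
The plan is to reduce to Corollary \ref{cor:one.point}, which already expresses $\mathbb{E}[\Psi(p)]$ for $p\in\Delta_n$ in terms of the single integral $I(p):=\int_{R(p,1)}h(y)\,dy$, and then to evaluate $I(p)$ explicitly under Assumption \ref{ass:density.B}, where $h(y)=\gamma\prod_i y_i^{\alpha_i}$ and $R(p,1)=\{y\in\mathbb{R}_+^n:\langle p,y\rangle<1\}$.

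First I would compute $I(p)$ via the change of variables $z_i=p_iy_i$, which carries $R(p,1)$ onto the open standard simplex $\Sigma:=\{z\in\mathbb{R}_+^n:\sum_iz_i<1\}$ with Jacobian $\prod_ip_i^{-1}$, giving
$$I(p)=\gamma\Big(\prod_{i=1}^np_i^{-(1+\alpha_i)}\Big)\int_{\Sigma}\prod_{i=1}^nz_i^{\alpha_i}\,dz.$$
The integral over $\Sigma$ is the classical Dirichlet integral $\prod_i\Gamma(1+\alpha_i)\big/\Gamma(n+\alpha+1)$, finite exactly because each $\alpha_i>-1$ (and $\alpha=\sum_i\alpha_i$), so $I(p)=B\prod_ip_i^{-(1+\alpha_i)}$ with $B:=\gamma\prod_i\Gamma(1+\alpha_i)\big/\Gamma(n+\alpha+1)$ independent of $p$. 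The key observation is that this $B$ is precisely the constant $M$ of \eqref{eqn:const.M}: indeed $R(\overline e,1/n)=\Sigma$, so the same Dirichlet integral gives $M=\int_{R(\overline e,1/n)}h(y)\,dy=B$. Plugging $I(p)=M\prod_ip_i^{-(1+\alpha_i)}$ into the formula of Corollary \ref{cor:one.point} yields, for $p\in\Delta_n$,
$$\mathbb{E}[\Psi(p)]=\Gamma\!\Big(\tfrac{1}{n+\alpha}+1\Big)M^{-\frac1{n+\alpha}}\prod_{i=1}^np_i^{\frac{1+\alpha_i}{n+\alpha}}=L\,p_1^{\pi_1}\cdots p_n^{\pi_n},$$
and a one-line check gives $\sum_i\pi_i=(n+\alpha)/(n+\alpha)=1$ with each $\pi_i>0$, so $\pi\in\Delta_n$.

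Finally, to upgrade the identity from $\Delta_n$ to all of $\overline\Delta_n$ I would argue by continuity: the right-hand side is manifestly continuous on $\overline\Delta_n$, and the left-hand side is too, since $0\le\Psi(p)\le n\Psi(\overline e)$ for every $p\in\overline\Delta_n$ by Lemma \ref{lem:concave.bound}(ii), with $\mathbb{E}[\Psi(\overline e)]<\infty$; dominated convergence together with the a.s.\ continuity of $\Psi$ on $\overline\Delta_n$ then makes $p\mapsto\mathbb{E}[\Psi(p)]$ continuous up to the boundary, and two continuous functions agreeing on the dense set $\Delta_n$ coincide everywhere.

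I expect no serious obstacle here: the argument is essentially a Dirichlet-integral computation. The only points deserving care are the bookkeeping in the change of variables and, above all, recognizing that the Dirichlet normalizing constant equals $M$ — which is exactly what makes $L$ come out in the stated closed form — together with the short dominated-convergence step at the boundary.
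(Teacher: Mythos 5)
Your proof is correct and follows essentially the same route as the paper: the same change of variables $z_i = p_i y_i$ reduces $\int_{R(p,1)} h$ to $M \prod_i p_i^{-(1+\alpha_i)}$, Corollary \ref{cor:one.point} then gives the formula on $\Delta_n$, and dominated convergence extends it to the boundary. Your explicit evaluation of the Dirichlet integral $\int_\Sigma \prod_i z_i^{\alpha_i}\,dz$ is a correct but unnecessary extra, since the statement only requires $M$ as an unevaluated constant.
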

\begin{proof}
	For $p \in \Delta_n$, letting $u_i = p_iy_i$, we have
	\begin{equation} \label{eqn:h.integral.Rp}
	\int_{R(p,1)}h(y)dy = \frac{\gamma}{p_1^{1+\alpha_1} \cdots p_n^{1+\alpha_n}}\int_{R(\overline{e},1/n)} \prod_{i = 1}^n u_i^{\alpha_i}du = \frac{M}{p_1^{1+\alpha_1} \cdots p_n^{1+\alpha_n}},
	\end{equation}
	and \eqref{eqn:exp.expectation} follows by Corollary \ref{cor:one.point}.  By the dominated convergence theorem we may extend \eqref{eqn:exp.expectation} to the boundary.
\end{proof}

\begin{figure}[t]
	\centering
	\includegraphics[scale=0.45]{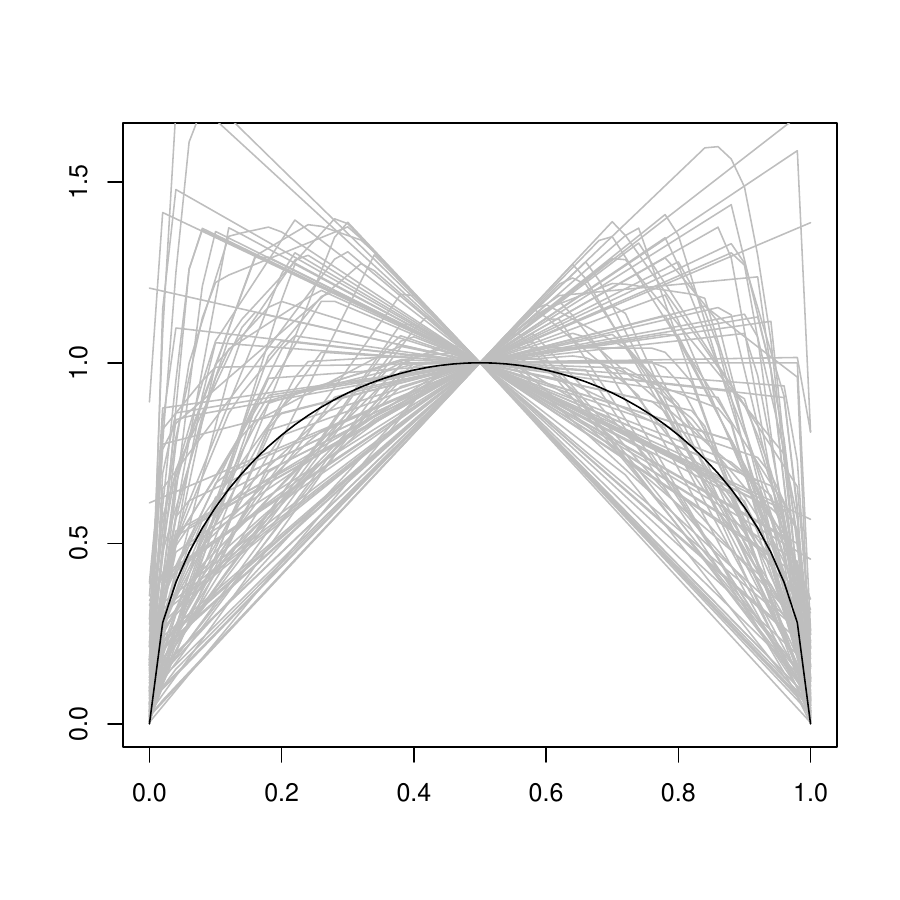}
	\caption{Samples of $\Psi_K(\cdot) / \Psi_K(\overline{e})$ where $n = 2$, $K = 300$ and the components of $C$ are independent $U(0, 1)$ random variables (thus $\alpha_1 = \alpha_2 = 0$). The $x$-axis is $p_1 = 1-p_2$. The solid black curve is the (limiting) expected value given by the geometric mean $2 \sqrt{p_1 p_2}$.} \label{fig:sample}
\end{figure}

From Proposition \ref{prop:one.point.expectation}, the geometric mean emerges as a limiting average shape of the random concave functions $\Psi_K = K^{\frac{1}{n + \alpha}} \min\{\ell_1, \ldots, \ell_K\}$. See Figure \ref{fig:sample} for an illustration where we plot instead the normalized value $\Psi_K(\cdot) / \Psi_K(\overline{e})$ (so that the value is $1$ at the barycenter). This result is interesting because the geometric mean \eqref{eqn:exp.expectation} plays a fundamental role in stochastic portfolio theory and the Dirichlet transport. Indeed, as a portfolio generating function it generates, in the sense of \eqref{eqn:fgp}, the constant-weighted portfolio $\boldsymbol{\pi}(p) \equiv \pi$. In Section \ref{sec:portfolio.weight} we give further results about the random portfolio map generated by $\Psi$. 

\subsection{Boundary behavior}
Next we study the behavior of the random function $\Psi$ near the boundary $\partial \Delta_n$ of the simplex. In this subsection we work under Assumption \ref{ass:density.B}. First we show that $\Psi$ vanishes on the boundary almost surely.

\begin{proposition} \label{prop:boundary}
	Under Assumption \ref{ass:density.B} we have $\mathbb{P}( \Psi|_{\partial \Delta_n} \equiv 0 ) = 1$.
\end{proposition}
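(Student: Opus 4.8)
The plan is to reduce the assertion to a pointwise statement at finitely many boundary points and then invoke the explicit formula for $\mathbb{E}[\Psi(p)]$ from Proposition \ref{prop:one.point.expectation}. The reduction uses the following elementary fact: a nonnegative concave function on a convex set that vanishes at a single point of the relative interior vanishes identically on that set (if $\psi(p_0)=0$ and $p_0$ is relatively interior, then for any $q$ in the set one can write $p_0$ as a nontrivial convex combination of $q$ and some other point $q''$ of the set, and concavity together with $\psi\ge 0$ forces $\psi(q)=0$). Applying this with the set taken to be the facet $F_j := \{p \in \overline{\Delta}_n : p_j = 0\}$, it suffices to show that for each $j = 1, \ldots, n$ we have $\Psi(b^{(j)}) = 0$ almost surely, where $b^{(j)}$ is the barycenter of $F_j$, i.e. the point with $j$-th coordinate $0$ and all other coordinates equal to $1/(n-1)$, which lies in the relative interior of $F_j$. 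Once this is done, intersecting the $n$ events of full probability and using $\partial \Delta_n = \bigcup_{j=1}^n F_j$ together with the continuity of $\Psi$ on $\overline{\Delta}_n$ (recall $\mu$ is supported on $\mathcal{C}_+\subset\mathcal{C}$) yields $\Psi|_{\partial \Delta_n} \equiv 0$ almost surely.

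To prove $\Psi(b^{(j)}) = 0$ a.s., fix $j$ and approximate $b^{(j)}$ from the interior by the points $c^{(j, 1/k)} \in \Delta_n$ defined in \eqref{eqn:c.j.epsilon}, taken for $k \geq n$; then $c^{(j, 1/k)} \to b^{(j)}$ as $k \to \infty$. Since $\Psi$ is continuous on $\overline{\Delta}_n$, we have $\Psi(c^{(j,1/k)}) \to \Psi(b^{(j)})$ almost surely. On the other hand, by Proposition \ref{prop:one.point.expectation},
\[
\mathbb{E}\bigl[\Psi(c^{(j,1/k)})\bigr] = L \Bigl(\tfrac{1}{k}\Bigr)^{\pi_j} \prod_{i \neq j} \Bigl(\tfrac{1 - 1/k}{n-1}\Bigr)^{\pi_i},
\]
where $\pi_i = (1 + \alpha_i)/(n + \alpha) > 0$ because $\alpha_i > -1$ and $n + \alpha > 0$ under Assumption \ref{ass:density.B}. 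The product over $i \neq j$ stays bounded, while $(1/k)^{\pi_j} \to 0$, so $\mathbb{E}[\Psi(c^{(j,1/k)})] \to 0$. As $\Psi(c^{(j,1/k)}) \geq 0$, this is $L^1$ convergence to $0$, hence convergence in probability to $0$. Since the almost sure limit and the limit in probability of the same sequence must coincide, we conclude $\Psi(b^{(j)}) = 0$ almost surely, as required.

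There is no serious obstacle; the only points requiring care are the convex-analytic reduction lemma and the verification that each exponent $\pi_j$ is strictly positive, so that the approximating expectations genuinely tend to $0$. An alternative route, bypassing the reduction lemma, would be to run the same approximation argument at every point of a countable dense subset of $\partial \Delta_n$ and then invoke continuity of $\Psi$; I prefer the version above since it only involves the $n$ facet barycenters.
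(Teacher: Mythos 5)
Your proof is correct. It rests on the same core mechanism as the paper's: approximate a boundary point by interior points $c^{(j,\epsilon)}$, use continuity of $\Psi$ on $\overline{\Delta}_n$ to pass to the limit almost surely, and use the one-point marginal of $\mu$ to force that limit to be zero. The two places where you diverge are minor but worth noting. First, the paper deduces $\Psi(p^{(r)}) \to 0$ in distribution from Proposition \ref{prop:one.point} (the rate $\int_{R(p^{(r)},1)} h$ of the exponential law of $\Psi(p^{(r)})^{n+\alpha}$ blows up as $p^{(r)}$ approaches the boundary) together with the Portmanteau lemma, whereas you use the expectation formula of Proposition \ref{prop:one.point.expectation} and the implication $L^1 \Rightarrow$ convergence in probability; your route is marginally slicker but needs the strict positivity of $\pi_j$, which you correctly verify from $\alpha_j > -1$ and $n+\alpha > 0$. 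Second, the paper covers all of $\partial\Delta_n$ by running the pointwise argument over a countable dense subset and invoking continuity, whereas you reduce to the $n$ facet barycenters via the standard fact that a nonnegative concave function vanishing at a relatively interior point of a convex set vanishes identically on it; both reductions are sound, and yours has the small aesthetic advantage of needing only finitely many test points.
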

\begin{proof}
	Let $p \in \partial \Delta_n$ and let $\{p^{(r)}\}_{r = 1}^{\infty}$ be a sequence in $\Delta_n$ converging to $p$. By continuity of $\Psi$, we have the almost sure limit
	\begin{equation} \label{eqn:pointwise.limit}
	\Psi(p) = \lim_{r \rightarrow \infty} \Psi(p^{(r)}).
	\end{equation}
	Using the convergence \eqref{eqn:pointwise.limit} (which implies weak convergence), the Portmanteau lemma and Proposition \ref{prop:finite.dimensional}, we have
	\[
	\mathbb{P} (\Psi(p) > x) \leq \limsup_{r \rightarrow \infty} \exp \left( -x^{n + \alpha} \int_{R(p^{(r)}, 1)} h(y) dy \right)
	\]
	for all $x > 0$. By \eqref{eqn:h.integral.Rp}, we have $\int_{R(p^{(r)}, 1)} h(y) dy \rightarrow \infty$ as $r \rightarrow \infty$. Thus $\Psi(p) = 0$ almost surely for any fixed $p \in \partial \Delta_n$. The previous argument then implies that $\mathbb{P} (\Psi|_D \equiv 0 ) = 1$, where $D$ is a countable dense set of $\partial \Delta_n$. By the continuity of $\Psi$ we have $\mathbb{P} (\Psi|_{\partial \Delta_n} \equiv 0) = 1$.
\end{proof}

Indeed, under the same assumptions we can derive a H\"{o}lder estimate for $\Psi$ near the boundary.

\begin{theorem} \label{thm:boundary.behavior}
	Suppose Assumption \ref{ass:density.B} holds. Let $\pi_i = \frac{1+ \alpha_i}{n + \alpha}$, $\kappa > 0$ and $\beta > 0$. Fix $j \in \{1, 2, \ldots, n\}$. Then, almost surely, there exists $\delta = \delta(\omega) > 0$ depending on $\omega \in \Omega$ such that
	\begin{equation} \label{eqn:boundary.Holder}
	\Psi(p) \leq \kappa p_j ^{(1 - \beta)\pi_j}, \quad \text{for } p_j  \leq \delta.
	\end{equation}
\end{theorem}
\begin{proof}
We consider the behavior of $\Psi$ near the face $p_j = 0$. Recall the notation $c^{(j, \epsilon)}$ defined in \eqref{eqn:c.j.epsilon}. Define the events
	\[
	B(a, \epsilon) = \{\Psi(c^{(j, \epsilon)}) > a \}.
	\]
	By Proposition \ref{prop:finite.dimensional} and \eqref{eqn:h.integral.Rp}, we have
	\[
	\mathbb{P}(B(a, \epsilon))  =  \exp \left\{  -\frac{M a^{n+\alpha}}{\epsilon^{1 + \alpha_j}} \left( \frac{n-1}{1 - \epsilon} \right)^{n-1+\alpha-\alpha_j} \right\},
	\]
	where $M$ is the constant defined by \eqref{eqn:const.M}.  Now let $B_k = B(a_k, \epsilon_k)$ where $\epsilon_k = \frac{1}{k}$ and
	\[
	a_k = \frac{\kappa}{n} \left( \frac{1}{k + 1} \right)^{ \frac{1+\alpha_j}{n+\alpha}(1 - \beta)} = \frac{\kappa}{n} \left( \frac{1}{k + 1} \right)^{(1 - \beta)\pi_j}.
	\]
	With this choice of $\epsilon_k$ and $a_k$ we have $\sum_{k = 1}^{\infty} \mathbb{P}(B_k) < \infty$. By the Borel-Cantelli lemma, we have $\mathbb{P}(B_k \; \text{i.o.}) = 0$, so that almost surely, there exists $k_0 = k_0(\omega)$ such that
	\[
	\Psi(c^{(j, 1/k)}) \leq \frac{\kappa}{n} \left( \frac{1}{k + 1} \right)^{(1 - \beta)\pi_j}, \text{ for } k \geq k_0.
	\]
	
	Now define $\delta = \min\{1/k_0, 1/n\}$. If $p \in \Delta_n$ satisfies $p_j \leq \delta$, let $k \geq 1/\delta \geq k_0$ such that $\frac{1}{k + 1} \leq p_j \leq \frac{1}{k}$. Then, from \eqref{eqn:concave.bound2} in Lemma \ref{lem:concave.bound}, we have
	\[
	\Psi(p) \leq n \Psi(c^{(j, 1/k)}) \leq  \kappa p_j ^{(1 - \beta)\pi_j}.
	\]
\end{proof}

\subsection{Explicit formula of the tail probability}
In this subsection we specialize to the case where $h(x) \equiv \gamma > 0$ on $\mathbb{R}_+^n$, i.e., the intensity measure is a constant multiple of the Lebesgue measure. By Theorem \ref{thm:tail.probability}, the tail probability of $\mu$ is given by
\[
\mathsf{T}_{\mu}(\psi) = \exp \left( - \gamma \mathrm{vol}(\widehat{R}(\psi))\right), \quad \psi \in \mathcal{C},
\]
where the volume $\mathrm{vol}(\widehat{R}(\psi))$ means the Lebesgue measure of $\widehat{R}(\psi)$. To understand it further it is desirable to have an explicit formula of the volume of the region $\widehat{R}(\psi)$. This will be computed under the assumption that $\psi$ is $C^2$ (twice continuously differentiable). The computation, which is differential geometric, reveals an interesting geometric structure of the duality $\psi \leftrightarrow \widehat{R}(\psi)$ which may be of independent interest. We believe that the resulting formula can be extended to the general nonsmooth case by using the Monge-Amp\`{e}re measure (see \cite[Section 2.1]{DF14}), but this will not be pursued further in this paper.

First we introduce some notations that will be used in this subsection. We let $(q_1, \ldots, q_{n - 1})$ be the coordinate system on $\Delta_n$ obtained by dropping the last component of $p = (p_1, \ldots, p_n)$. The domain of $q$ is
\[
D_{n-1} := \left\{(q_1, \ldots, q_{n-1}) \in (0, 1): \sum_{i = 1}^{n-1} q_i < 1\right\}.
\]
Given $\psi \in \mathcal{C}$, by an abuse of notation we also regard it as a function of $q$ on $D_{n-1}$:
\[
\psi(q_1, \ldots, q_{n-1}) = \psi \left(q_1, \ldots, q_{n-1}, 1 - \sum_{i = 1}^{n-1}q_i \right).
\]
Also, by $D^2 \psi(q)$ we mean the Hessian matrix of $\psi$ as a function of the $(n - 1)$-dimensional variable $q$.

To illustrate the technique we first assume that $\widehat{R}(\psi)$ is bounded. Note that by Proposition \ref{prop:boundary}, if $\Psi \sim \mu$ then $\Psi$ vanishes on the boundary. So in the tail probability it suffices to consider only functions that vanish on $\partial \Delta_n$.

\begin{theorem} \label{thm:stoke}
	Suppose $\psi \in \mathcal{C}$ is $C^2$ on $\Delta_n$, $\psi|_{\partial \Delta_n} \equiv 0$ and $\widehat{R}(\psi)$ is bounded. Then
	\begin{equation} \label{eqn:volume}
	\mathrm{vol}(\widehat{R}(\psi)) = \frac{1}{n} \int_{D_{n-1}} \psi(q) \det (-D^2 \psi(q)) dq.
	\end{equation}
	In particular, when $h(x) \equiv \gamma > 0$ we have
	\begin{equation} \label{eqn:tail.C2}
	\mathsf{T}_{\mu}(\psi) = \exp \left( \frac{-\gamma}{n}  \int_{D_{n-1}} \psi(q) \det (-D^2 \psi(q)) dq \right).
	\end{equation}
\end{theorem}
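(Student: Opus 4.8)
The plan is to make the duality $\psi\leftrightarrow\widehat R(\psi)$ geometrically explicit and then integrate. If $\psi\equiv 0$ then $\widehat R(\psi)=\emptyset$ and both sides of \eqref{eqn:volume} vanish, so assume $\psi\not\equiv 0$; since $\psi$ is concave, non-negative and vanishes on $\partial\Delta_n$, the argument of Lemma \ref{lem:concave.bound} gives $\psi>0$ on $\Delta_n$, hence $0\in R(p,\psi(p))$ for every $p$. Thus $\widehat R(\psi)$ is open, bounded, contains the origin and, being a union of convex sets all containing $0$, is star-shaped with respect to $0$; its boundary inside $\mathbb R_+^n$ coincides with $\partial\widehat S(\psi)\cap\mathbb R_+^n$, which is the envelope of the family of hyperplanes $\{x:\langle p,x\rangle=\psi(p)\}$, $p\in\Delta_n$, while the remaining part of $\partial\widehat R(\psi)$ lies on the coordinate hyperplanes $\{x_i=0\}$. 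I would first parametrize this envelope over $q\in D_{n-1}$: writing $p(q)=(q_1,\dots,q_{n-1},1-\sum_k q_k)$ and differentiating the incidence relation $\langle p(q),F(q)\rangle=\psi(q)$ in each $q_l$ yields $F_j(q)-F_n(q)=\psi_{q_j}(q)$, so that $F_n(q)=\psi(q)-\sum_k q_k\psi_{q_k}(q)$ and $F_j(q)=F_n(q)+\psi_{q_j}(q)$ for $j<n$.

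Next I would reduce the volume to a single determinant integral. Using the star-shapedness, the map $(t,q)\mapsto t\,F(q)$ from $(0,1)\times D_{n-1}$ onto $\widehat R(\psi)$ is a change of variables (a.e.\ bijective, since a ray from $0$ through an interior point leaves the bounded set $\widehat R(\psi)$ through the envelope); equivalently one may apply Stokes' theorem to the volume form using the primitive $\frac1n\sum_i(-1)^{i-1}x_i\,dx_1\wedge\cdots\wedge dx_{i-1}\wedge dx_{i+1}\wedge\cdots\wedge dx_n$, whose restriction to each $\{x_i=0\}$ vanishes. Either way one is left with
\[
\mathrm{vol}(\widehat R(\psi))=\frac1n\int_{D_{n-1}}\det\!\begin{pmatrix}F(q)^{\mathsf T}\\[2pt] DF(q)\end{pmatrix}dq ,
\]
where $DF(q)$ is the $(n-1)\times n$ Jacobian of $F$, the factor $\frac1n$ coming from $\int_0^1 t^{n-1}\,dt$.

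The heart of the proof is evaluating that $n\times n$ determinant. With $H:=D^2\psi(q)$ and $g:=F_n(q)$ one computes $\partial_{q_l}g=-(Hq)_l$, hence $\partial_{q_l}F_j=H_{jl}-(Hq)_l$ for $j<n$ and $\partial_{q_l}F_n=-(Hq)_l$. Subtracting the last column from each of the first $n-1$ columns turns the matrix into $\bigl(\begin{smallmatrix}\nabla\psi^{\mathsf T}&g\\ H&-Hq\end{smallmatrix}\bigr)$, and then adding $q_k$ times the $k$-th column to the last column, for each $k\le n-1$, and using $g+\sum_k q_k\psi_{q_k}=\psi$, brings it to the block form $\bigl(\begin{smallmatrix}\nabla\psi^{\mathsf T}&\psi\\ H&0\end{smallmatrix}\bigr)$. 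Expanding along the last column gives $(-1)^{n+1}\psi(q)\det H=\psi(q)\det(-D^2\psi(q))$, since $\det H=(-1)^{n-1}\det(-H)$; this also shows the integrand is non-negative, consistent with the orientation. Substituting back proves \eqref{eqn:volume}, and \eqref{eqn:tail.C2} follows at once from Theorem \ref{thm:tail.probability} with $h\equiv\gamma$, for which $\int_{\widehat R(\psi)}h\,dx=\gamma\,\mathrm{vol}(\widehat R(\psi))$.

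I expect the delicate part to be the topological bookkeeping rather than the calculus: one must justify that $\partial\widehat R(\psi)\cap\mathbb R_+^n$ is exactly the image of the envelope map $F$, that the piecewise-smooth divergence theorem applies, and that $F$ is injective off the set $\{\det(-D^2\psi)=0\}$ — so that degeneracy of the Hessian and the resulting non-injectivity are harmless, each contributing $0$ to the integral. Here $\psi\in C^1$ forces $\widehat S(\psi)$ to be strictly convex, so its boundary has no segments and the supporting hyperplane normal is unique at each smooth point, which yields the needed injectivity; alternatively one may first prove \eqref{eqn:volume} for strictly concave $C^2$ functions, where $F$ is an embedding and everything is clean, and then pass to the limit, both sides of \eqref{eqn:volume} being continuous under $C^2$-convergence on compact subsets of $\Delta_n$ together with the uniform control near $\partial\Delta_n$ supplied by boundedness of $\widehat R(\psi)$.
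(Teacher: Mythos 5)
Your proposal is correct and follows essentially the same route as the paper: Stokes' theorem applied to the radial $(n-1)$-form, the curved boundary parametrized by the tangent-plane map $F(q)=X(q)$, vanishing of the form on the coordinate hyperplanes, and a determinant identity reducing the pullback to $\psi(q)\det(-D^2\psi(q))$ (your column operations are the transpose of the paper's row operations and rank argument, and your sign bookkeeping checks out). The star-shapedness/cone-volume reformulation and the injectivity discussion are harmless variants of the same computation rather than a genuinely different argument.
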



\begin{proof}
	Under the stated assumptions, the closure of $\widehat{R}(\psi)$ is an $n$-dimensional orientable compact manifold, denoted by $M$ and embedded in $\mathbb{R}^n$, with piecewise $C^1$ boundary. The Euclidean coordinates $(x_1, \ldots, x_n)$ of $\mathbb{R}^n$ is a global coordinate system of $M$.
	
	We will apply Stokes' theorem (see for example \cite{Lee12})
	\begin{equation} \label{eqn:stokes}
	\int_{\partial M} \omega = \int_{M} d\omega
	\end{equation}
	with the differential $(n - 1)$-form given by
	\begin{equation} \label{eqn:omega}
	\omega = \sum_{k = 1}^n (-1)^{k+1} x_k dx_1 \wedge \cdots \wedge \widehat{dx_k} \wedge \cdots \wedge dx_n.
	\end{equation}
	Here the notation $\widehat{dx_k}$ means that the term $dx_k$ is omitted in the wedge product. It is immediate to check that the exterior derivative of $\omega$ is given by
	\[
	d\omega = n dx_1 \wedge \cdots \wedge dx_n,
	\]
	so that the right hand side of \eqref{eqn:stokes} is
	\[
	\int_{M} d\omega = n \mathrm{vol}(\widehat{R}(\psi)).
	\]
	
	It remains to compute $\int_{\partial M} \omega$. The boundary $\partial M$ of $M$ consists of parts of the coordinate hyperplanes as well as the (curved) part $S \subset \partial M$ parameterized by
	\begin{equation} \label{eqn:curved.part}
	X_i(p) = \psi(p) + D\psi(p) \cdot (e_i - p), \quad 1 \leq i \leq n, \ p \in \Delta_n.
	\end{equation}
	
	First we reparameterize $N$ in terms of $q \in D_{n-1}$. Denote the partial derivatives of $\psi$ (as a function of $q$) by
	\begin{equation} \label{eqn:psi.derivative}
	\psi_i(q) = \frac{\partial \psi}{\partial q_i}(q) = D\psi(p) \cdot (e_i - e_n).
	\end{equation}
	Since
	\[
	e_i - p = e_i - e_n - \sum_{r = 1}^{n - 1} p_r (e_r - e_n),
	\]
	we have
	\[
	D\psi(p) \cdot (e_i - p) =
	\left\{\begin{array}{ll}
	\psi_i(q) - \sum_{r = 1}^{n-1} q_r \psi_r(q), & \text{if } 1 \leq i \leq n - 1 \\
	-\sum_{r = 1}^{n-1} q_r \psi_r(q), & \text{if } i = n.\\
	\end{array}\right.
	\]
	Writing $X$ as a function of $q$, we can rewrite \eqref{eqn:curved.part} as
	\[
	X_i(q) =
	\left\{\begin{array}{ll}
	\psi(q) + \psi_i(q) - \sum_{r = 1}^{n-1} q_r \psi_r(q), & \text{if } 1 \leq i \leq n - 1 \\
	\psi(q) -\sum_{r = 1}^{n-1} q_r \psi_r(q), & \text{if } i = n.\\
	\end{array}\right.
	\]
	On each of the coordinate planes $x_i = 0$ we have, from \eqref{eqn:omega},
	\[
	\int_{\partial M \cap \{x_i = 0\}} \omega = 0.
	\]
	On the curved part $S$ where $x = X(q)$, consider the pullback
	\[
	dx_i = \sum_{j = 1}^{n-1} \frac{\partial X_i}{\partial q_j} dq_j.
	\]
	Plugging this into \eqref{eqn:omega} and computing the pullback of $\omega$, we see that
	\[
	\int_{N} \omega = \int_{D_{n-1}} | \mathrm{det}(A(q))| dq_1  \cdots  dq_{n-1},
	\]
	where $A(q)$ is the $n \times n$ matrix with entries $\frac{\partial}{\partial q_j} X_i(q)$ for $1 \leq i \leq n$, $1 \leq j \leq n - 1$, and $n$th column $X(q)$.   Now for $1 \le j \le n-1$ we have
	$$
	\frac{\partial X_i}{\partial q_j}(q) =   \left\{ \begin{array}{ll}
	\psi_{ij}(q) - \sum_{r=1}^{n-1} q_r \psi_{rj}(q) & \mbox{ if } 1 \leq i \le n-1\\[2ex]
	- \sum_{r=1}^{n-1} q_r \psi_{rj}(q) & \mbox{ if } i=n .
	\end{array} \right.
	$$
	The matrix $A(q)$ can be written in block $((n-1)+1) \times ((n-1)+1)$ form as
	$$
	A(q) = \left[ \begin{array}{ccc}
	\psi_{ij}(q) - \sum_{r=1}^{n-1} q_r \psi_{rj}(q) & \hspace{1ex} &  \psi(q)+\psi_i(q)- \sum_{r=1}^{n-1} q_r\psi_r(q) \\[3ex]
	- \sum_{r=1}^{n-1} q_r \psi_{rj}(q) & &\psi(q)- \sum_{r=1}^{n-1} q_r\psi_r(q)
	\end{array} \right].
	$$
	Subtracting the bottom row from all the other rows, we get $\det(A(q)) = \det(B(q))$ where
	$$
	B(q) = \left[ \begin{array}{cc}
	\psi_{ij}(q) &   \psi_i(q) \\[2ex]
	- \sum_{r=1}^{n-1} q_r \psi_{rj}(q) & \psi(q)- \sum_{r=1}^{n-1} q_r\psi_r(q)
	\end{array} \right].
	$$
	In the matrix $B(q)$ the term $\psi(q)$ appears as an additive term in $(n,n)$ entry, and nowhere else, so the calculation of $\det(B(q))$ involves some terms which do not involve $\psi(q)$, and all the other terms contain a simple factor $\psi(q)$.  We get
	\begin{equation*}
	\begin{split}
		\det(B(q)) &= \det\left[\begin{array}{cc}
			\psi_{ij}(q) &   \psi_i(q) \\ [1ex]
			- \sum_{r=1}^{n-1} q_r \psi_{rj}(q) &  - \sum_{r=1}^{n-1} q_r\psi_r(q)
		\end{array}\right]+\psi(q)\det\Big[ \psi_{ij}(q)\Bigr]\\[1ex]
		&=:  \det(C(q)) + \psi(q)\det\Big[ \psi_{ij}(q)\Bigr],
	\end{split}
	\end{equation*}
  say. 	Now we can write
\begin{equation*}
\begin{split}
	C(q) &=  \left[ \begin{array}{cc}
	\psi_{ij}(q) &   \psi_i(q) \\[2ex]
	- \sum_{r=1}^{n-1} q_r \psi_{rj}(q) &  - \sum_{r=1}^{n-1} q_r\psi_r(q)
	\end{array} \right] \\ &=\left[ \begin{array}{c}
	\delta_{ij} \\[2ex]
	-q_j \end{array} \right]\Bigl[ \begin{array}{cc}
	\psi_{ij}(q) &  \psi_i(q)
	\end{array} \Bigr]
\end{split}
\end{equation*}
	as the product of an $n \times (n-1)$ and an $(n-1) \times n$ matrix.  This implies that the rank of $C(q)$ is at most $n-1$ and so $\det(C(q)) = 0$.
	Therefore
	$$
	\det(A(q)) = \det(B(q))  = \psi(q)\det\Big[ \psi_{ij}(q)\Bigr]
	$$
	and the proof is complete.
\end{proof}

Now we relax the boundedness assumption.

\begin{theorem} \label{thm:stokes2}
	The volume formula \eqref{eqn:volume} holds for any $\psi \in \mathcal{C}$ which is $C^2$ and satisfies $\psi|_{\partial{\Delta_n}} \equiv 0$.
\end{theorem}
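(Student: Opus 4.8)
The plan is to bypass Stokes' theorem altogether and instead exhibit $\widehat R(\psi)$ as the image of an explicit measurable change of variables to which the area formula applies; this reuses only the determinant computation already carried out inside the proof of Theorem~\ref{thm:stoke}, needs no boundedness, and in fact reproves the bounded case. At the outset we may assume $\psi\in\mathcal{C}_+$: a function in $\mathcal{C}$ that vanishes on all of $\partial\Delta_n$ and at some interior point is identically $0$ by concavity, and then both sides of \eqref{eqn:volume} are $0$.

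First I would introduce the gauge $g(x):=\inf_{p\in\Delta_n}\langle p,x\rangle/\psi(p)$ for $x\in\mathbb{R}^n_+$. Since $\psi$ is bounded on $\overline{\Delta}_n$ with $\psi\to0$ at $\partial\Delta_n$, and $\langle p,x\rangle\ge\min_i x_i>0$, the function $p\mapsto\langle p,x\rangle/\psi(p)$ is continuous on $\Delta_n$ and tends to $+\infty$ at the boundary; hence $g(x)\in(0,\infty)$, the infimum is attained at an interior point $p^{\ast}=p^{\ast}(x)$, and $g$ is positively homogeneous of degree one. From \eqref{eqn:R.hat.general}--\eqref{eqn:S.hat} one reads off $\widehat R(\psi)=\{x\in\mathbb{R}^n_+:g(x)<1\}$. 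At $p^{\ast}$ the affine function $p\mapsto g(x)^{-1}\langle p,x\rangle$ dominates $\psi$ and touches it, hence is the (unique, by differentiability) supporting hyperplane of $\psi$ there; evaluating both descriptions at the vertices gives $g(x)^{-1}x_i=\psi(p^{\ast})+D\psi(p^{\ast})\cdot(e_i-p^{\ast})=X_i(p^{\ast})$, with $X$ as in \eqref{eqn:curved.part}. Writing $q^{\ast}$ for the first $n-1$ barycentric coordinates of $p^{\ast}$, every $x\in\widehat R(\psi)$ is therefore $x=s\,X(q^{\ast})$ with $s=g(x)\in(0,1)$. One also checks that $X(q)\in[0,\infty)^n$ for every $q$ (the tangent hyperplane at $p(q)$ lies above $\psi\ge0$ and $\psi(e_i)=0$) and that $X_i(q)=0$ forces $\psi$ to be affine on $[p(q),e_i]$, hence $\det D^2\psi(q)=0$.

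Next I would study the $C^1$ map $\Theta:(0,1)\times D_{n-1}\to\mathbb{R}^n$, $\Theta(s,q)=s\,X(q)$. By the previous paragraph its image contains $\widehat R(\psi)$; conversely $g(sX(q))=s<1$ whenever $X(q)\in\mathbb{R}^n_+$, while if $X(q)\notin\mathbb{R}^n_+$ then $X(q)$ and $sX(q)$ lie on $\partial[0,\infty)^n$. The Jacobian matrix of $\Theta$ has columns $X(q),\,s\,\partial_{q_1}X,\dots,s\,\partial_{q_{n-1}}X$, so $|\det D\Theta(s,q)|=s^{\,n-1}\,\bigl|\det[\,X(q)\mid\partial_{q_1}X\mid\cdots\mid\partial_{q_{n-1}}X\,]\bigr|$; moving the $X$-column to the last slot and invoking the determinant identity from the proof of Theorem~\ref{thm:stoke} (with $D^2\psi(q)\preceq 0$, so that $(-1)^{n-1}\det D^2\psi(q)=\det(-D^2\psi(q))\ge0$) gives $|\det D\Theta(s,q)|=s^{\,n-1}\psi(q)\det(-D^2\psi(q))$. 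The area formula (\cite{EG15}) then yields, with both sides in $[0,\infty]$,
\[
\int_{\mathbb{R}^n}\#\,\Theta^{-1}(y)\,dy=\int_{(0,1)\times D_{n-1}}s^{\,n-1}\psi(q)\det(-D^2\psi(q))\,ds\,dq=\frac1n\int_{D_{n-1}}\psi(q)\det(-D^2\psi(q))\,dq .
\]
Since every point of the image of $\Theta$ lying outside $\widehat R(\psi)$ has a vanishing coordinate, the left side equals $\int_{\widehat R(\psi)}\#\,\Theta^{-1}(y)\,dy$, and \eqref{eqn:volume} reduces to showing that $\Theta$ is one-to-one for a.e.\ $y\in\widehat R(\psi)$.

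For the a.e.-injectivity: if $\Theta(s_1,q_1)=\Theta(s_2,q_2)=y$ then $s_1=g(y)=s_2$ and $X(q_1)=X(q_2)$, so the common supporting hyperplane is tangent to $\psi$ at both $p(q_1)$ and $p(q_2)$ and therefore agrees with $\psi$ on the segment joining them; then $\psi$ is affine there and $\det D^2\psi$ vanishes along it, unless $q_1=q_2$. Thus non-injectivity occurs only over $\{q:\det D^2\psi(q)=0\}$, where $\det D\Theta=0$; applying the area formula to $\Theta$ restricted to that set shows its image is Lebesgue-null, so $\#\,\Theta^{-1}(y)=1$ for a.e.\ $y$, completing the proof. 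The one point I expect to need genuine care is exactly this bookkeeping around the degenerate locus $\{\det D^2\psi=0\}$ (and the matching fact that $X$ can leave $\mathbb{R}^n_+$ only there); the possible unboundedness of $\widehat R(\psi)$ itself costs nothing, since the area formula holds for Lipschitz maps on arbitrary open sets and both sides are simply allowed to equal $+\infty$. Equivalently, one could avoid the area formula and instead apply Stokes' theorem exactly as in Theorem~\ref{thm:stoke} to the bounded cone $\overline{\{sX(q):s\in(0,1],\,q\in K\}}$ over a compact $K\Subset D_{n-1}$ — whose boundary consists of $X(K)$ together with a radially ruled part on which $\omega$ from \eqref{eqn:omega} vanishes — and then let $K\nearrow D_{n-1}$, using monotone convergence on both sides.
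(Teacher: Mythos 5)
Your argument is correct, and it is a genuinely different proof from the one in the paper. The paper keeps Stokes' theorem as the engine: it truncates to $\widehat{R}(\psi)\cap(0,K)^n$, identifies the extra boundary contributions on the planes $x_i=K$ with the volumes of the slabs $B_i^{(K)}$, and then removes the truncation via a monotonicity lemma for $\bigl(\widehat{R}(\psi)\cap(0,K)^n\bigr)\setminus B_i^{(K)}\nearrow\widehat{R}(\psi)$ plus monotone convergence on $\int_{S_K}\omega$. You instead exhibit $\widehat{R}(\psi)$ (up to a Lebesgue-null subset of $\partial[0,\infty)^n$) as the essentially injective image of the cone map $\Theta(s,q)=sX(q)$ and apply the area formula, reusing only the identity $\det A(q)=\psi(q)\det[\psi_{ij}(q)]$ from the proof of Theorem \ref{thm:stoke}. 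The key steps all check out: the gauge $g$ attains its infimum in the interior because $\psi|_{\partial\Delta_n}\equiv 0$ while $\langle p,x\rangle\geq\min_i x_i>0$; differentiability of $\psi$ makes the supporting hyperplane at the minimizer unique, giving surjectivity of $\Theta$ onto $\widehat{R}(\psi)$; the sign bookkeeping $(-1)^{n-1}\det D^2\psi=\det(-D^2\psi)\geq 0$ is right; and the failure of injectivity is correctly confined to the locus $\{\det D^2\psi=0\}$ (a common tangent plane at two points forces affineness of $\psi$ on the joining segment, and for a $C^2$ concave function the vanishing of a directional second derivative at the endpoints forces a kernel vector of $D^2\psi$ there), whose $\Theta$-image is null by the area formula itself. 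What your route buys is a uniform treatment of the bounded and unbounded cases — the possible unboundedness of $\widehat{R}(\psi)$ costs nothing because the area formula for the locally Lipschitz ($C^1$) map $\Theta$ on the open set $(0,1)\times D_{n-1}$ allows both sides to be $+\infty$ — and it eliminates the paper's boundary bookkeeping entirely; the price is the heavier machinery of the area formula and the care needed around the degenerate locus, exactly where you flagged it. Your closing alternative (Stokes on the bounded cones over compact $K\Subset D_{n-1}$, on whose ruled lateral boundary the form $\omega$ of \eqref{eqn:omega} vanishes since it is radially invariant there, followed by $K\nearrow D_{n-1}$) is also valid and is arguably the cleanest hybrid of the two approaches.
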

\noindent
{\it Proof.}
Here we apply Stokes' theorem with the same $(n-1)$-form $\omega$ on the region $\widehat{R}(\psi) \cap (0,K)^n$. The boundary now consists of $S_K = S \cap [0,K]^n$ together with parts of the coordinate planes $x_i = 0$ and parts of the planes $x_i = K$.  For given $K$ define sets
\[
A_1= \{(x_2,x_3, \ldots,x_n) \in(0,K)^{n-1}: (K,x_2,x_3,\ldots,x_n) \in \widehat{R}(\psi) \},
\]
and
\[
B_1 = (0,K) \times A_1 = \{x \in (0,K)^n : (K,x_2,x_3,\ldots,x_n) \in \widehat{R}(\psi) \}.
\]
Note that $\{K\} \times A_1$ is the part of the boundary of $\widehat{R}(\psi) \cap (0,K)^n$ which lies in the plane $x_1= K$. Since $x_1=K$ and $dx_1 = 0$ on the submanifold $\{K\} \times A_1$ we get
$$\omega =  \sum_{k=1}^n (-1)^{k+1}x_k dx_1 \wedge \cdots \wedge \widehat{dx_k} \wedge \cdots \wedge dx_n = K dx_2 \wedge \cdots \wedge dx_n.
$$
Since $dx_2 \wedge \cdots \wedge dx_n$ is the volume form on $\{K\} \times A_1$ given as the boundary of the oriented domain $\widehat{R}(\psi) \cap (0,K)^n$, we obtain
$$
\int_{\{K \} \times A_1} \omega = K \mbox{vol}(A_1) = \mbox{vol}(B_1).
$$
Similar calculations are valid for the other coordinates, involving the sets
$$
B_i = \{x \in (0,K)^n : (x_1,\ldots,x_{i-1},K,x_{i+1},\dots,x_n) \in \widehat{R}(\psi) \}.
$$
Now apply Stokes' theorem:
$$
\int_{\widehat{R}(\psi) \cap (0,K)^n} d\omega = \int_{\partial(\widehat{R}(\psi) \cap (0,K)^n)} \omega.
$$
Using the calculations above, we get
$$
n \mbox{vol}(\widehat{R} \cap (0,K)^n) = \sum_{i=1}^n \mbox{vol}(B_i) + \int_{S_K} \omega.
$$
Since $B_i \subset \widehat{R}(\psi) \cap (0,K)^n$ we can rewrite this as
\[
\sum_{i=1}^n \mbox{vol}\Bigl(\bigl(\widehat{R}(\psi) \cap (0,K)^n\bigr) \setminus B_i\Bigr) = \int_{S_M} \omega.
\]
Now we let $K \to \infty$.  On the right side we have monotone behavior:
\begin{equation*}
\begin{split}
\int_{S_M} \omega & = \frac{1}{n}\int_{X^{-1}(S_K)}\psi(q)\det(-D^2\psi(q))dp_1 \cdots dq_{n-1}\\
&\nearrow  \frac{1}{n}\int_{D_{n-1}}\psi(q)\det(-D^2\psi(q))dp_1 \cdots dq_{n-1}.
\end{split}
\end{equation*}

\begin{lemma}
	For $K > 0$ write
	\[
	B_i^{(K)} =  \{x \in (0,K)^n : (x_1,\ldots,x_{i-1},K,x_{i+1},\dots,x_n) \in \widehat{R}(\psi) \}.
	\]
	Then
	\[
	\Bigl(\widehat{R}(\psi) \cap (0,K)^n \Bigr) \setminus B_i^{(K)} \nearrow \widehat{R}(\psi) \quad \mbox{ as }  K \nearrow    \infty.
	\]
\end{lemma}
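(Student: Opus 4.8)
\emph{Proof plan.} Unwinding the notation, the symbol $\nearrow$ asserts two things about $E_K:=\bigl(\widehat{R}(\psi)\cap(0,K)^n\bigr)\setminus B_i^{(K)}$: that $K\mapsto E_K$ is nondecreasing, and that $\bigcup_{K>0}E_K=\widehat{R}(\psi)$. The tool behind both is the \emph{downward closedness} of $\widehat{R}(\psi)$: if $x\in R(p,a)$ and $x'\in\mathbb{R}_{+}^n$ with $x'\le x$ coordinatewise, then $\langle p,x'\rangle\le\langle p,x\rangle<a$ since the entries of $p\in\Delta_n$ are strictly positive, so $x'\in R(p,a)$; taking the union over $p$ shows $\widehat{R}(\psi)$ is downward closed, equivalently $\widehat{S}(\psi)=\{x:\langle p,x\rangle\ge\psi(p)\ \forall p\in\Delta_n\}$ is upward closed.

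\emph{Monotonicity.} Fix $0<K\le K'$ and $x\in E_K$, and write $z_t:=(x_1,\dots,x_{i-1},t,x_{i+1},\dots,x_n)$. Then $x\in\widehat{R}(\psi)\cap(0,K)^n\subseteq\widehat{R}(\psi)\cap(0,K')^n$, and $x\notin B_i^{(K)}$ means precisely $z_K\in\widehat{S}(\psi)$. Since $z_{K'}\ge z_K$ coordinatewise, upward closedness of $\widehat{S}(\psi)$ gives $z_{K'}\in\widehat{S}(\psi)$, i.e.\ $x\notin B_i^{(K')}$; hence $x\in E_{K'}$.

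\emph{Union.} As $E_K\subseteq\widehat{R}(\psi)$ is automatic, it remains to show every $x\in\widehat{R}(\psi)$ lies in $E_K$ for all large $K$. Put $C:=\sup_{\overline{\Delta}_n}\psi$, finite by continuity of $\psi$ on the compact set $\overline{\Delta}_n$; if $C=0$ then $\widehat{R}(\psi)=\emptyset$ and there is nothing to prove, so assume $C>0$ and set $m:=\min_{k\ne i}x_k>0$. I claim $z_K\notin\widehat{R}(\psi)$ once $K$ is large. If instead $z_K\in R(q,\psi(q))$ for some $q\in\Delta_n$, then $q_iK\le\langle q,z_K\rangle<\psi(q)\le C$ forces $q_i<C/K$, while at the same time $\langle q,z_K\rangle\ge\sum_{k\ne i}q_kx_k\ge m(1-q_i)\ge m(1-C/K)$ and $\psi(q)\le\eta(C/K)$, where $\eta(\delta):=\sup\{\psi(q'):q'\in\overline{\Delta}_n,\ q'_i\le\delta\}$. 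Since $\psi$ is continuous on $\overline{\Delta}_n$ and vanishes on the face $\{p_i=0\}$ — here the hypothesis $\psi|_{\partial\Delta_n}\equiv0$ is used — a routine compactness argument gives $\eta(\delta)\to0$ as $\delta\to0$; so once $K$ is large enough that $C/K\le\tfrac12$ and $\eta(C/K)<\tfrac{m}{2}$ we reach the contradiction $\tfrac{m}{2}\le m(1-C/K)\le\langle q,z_K\rangle<\psi(q)\le\eta(C/K)<\tfrac{m}{2}$. Hence $z_K\notin\widehat{R}(\psi)$, i.e.\ $x\notin B_i^{(K)}$; enlarging $K$ further so that $K>\max_k x_k$ puts $x\in\widehat{R}(\psi)\cap(0,K)^n$, whence $x\in E_K$. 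Combined with the monotonicity this yields $E_K\nearrow\widehat{R}(\psi)$.

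The only substantive step is the claim that $z_K$ eventually escapes $\widehat{R}(\psi)$: it needs both that $\psi$ is bounded (to pin $q_i$ below $C/K$ for any witness $q$) and that $\psi$ decays to $0$ near the face $\{p_i=0\}$ (so that the contribution $\sum_{k\ne i}q_kx_k$ of the coordinates of $z_K$ that stay bounded away from $0$ dominates $\psi(q)$). The rest is bookkeeping with the downward closedness of $\widehat{R}(\psi)$.
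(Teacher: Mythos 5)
Your proof is correct and follows essentially the same route as the paper's: monotonicity comes from the downward closedness of $\widehat{R}(\psi)$ (equivalently, upward closedness of $\widehat{S}(\psi)$), and the exhaustion of $\widehat{R}(\psi)$ uses exactly the paper's two ingredients, namely boundedness of $\psi$ and its vanishing near the face $\{p_i = 0\}$ (via $\psi|_{\partial\Delta_n}\equiv 0$). The only cosmetic difference is that you argue by contradiction from a hypothetical witness $q$ with $q_i < C/K$, whereas the paper verifies the separating inequality $Kp_1+\sum_{k\ge 2}x_kp_k\ge\psi(p)$ directly by splitting into the cases $p_1\ge\delta$ and $p_1<\delta$.
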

\begin{proof}
	It suffices to prove in the case $i = 1$.  Suppose first $K <L$.  If $x \in \bigl(\widehat{R}(\psi) \cap (0,K)^n \bigr) \setminus B_i^{(K)}$ then $x \in \widehat{R}(\psi) \cap (0,K)^n$ and $(K,x_2, \ldots,x_n) \not\in \widehat{R}(\psi)$.  Then $x \in \widehat{R}(\psi) \cap (0,L)^n$ and $(L,x_2, \ldots,x_n) \not\in \widehat{R}(\psi)$, so that $x \in \bigl(\widehat{R}(\psi) \cap (0,L)^n \bigr) \setminus B_i^L$.  This proves monotonicity.
	
	To complete the proof it suffices to show that if $x \in \widehat{R}(\psi)$ then there exists $K$ such that $x \in (0,K)^n$ and $(K,x_2,\ldots,x_n) \not\in \widehat{R}(\psi)$.
	To show $(K_2,\ldots,x_n) \not\in \widehat{R}(\psi)$ we need to show
	\[
	K p_1 +\sum_{i=2}^n x_ip_i  \ge  \psi(p)
	\]
	for all $p \in \Delta_n$.  Let $\epsilon = \min(x_2,x_3,\ldots,x_n) >0$.  Since $\psi$ has zero boundary values there is $\delta \in (0,1/2)$ such that $p_1 <\delta$ implies $\psi(p) < \epsilon /2$.  Finally we may choose $K > \max(\|\psi\|/\delta,x_1,x_2, \ldots,x_n)$.
	
	If $p_1 \geq \delta$ then $K p_1 +\sum_{i=2}^n x_ip_i  \geq  K\delta > \|\psi\| \geq \psi(p)$.  On the other hand if $p_1 <\delta$ then $\psi(p) < \epsilon$ and $p_2+\cdots + p_n = 1-p_1 \geq 1/2$, so that $Kp_1 +\sum_{i=2}^n x_ip_i \geq \epsilon/2 > \psi(p)$.  Together we have $(K,x_2,\ldots,x_n) \not\in \widehat{R}(\psi)$.  The condition $x \in (0,K)^n$ is trivially checked, and the proof is complete.
\end{proof}

\begin{proof}[Proof of the theorem, continued]
	Taking volumes of the increasing sequences of sets in the lemma, we get
	\[
	\mbox{vol}\Bigl(\bigl(\widehat{R}(\psi) \cap (0,K)^n\bigr) \setminus B_i^{(K)}\Bigr) \nearrow \mbox{vol}(\widehat{R}(\psi)
	\]
	as $K \rightarrow \infty$ for each $1 \le i \le n$, and the proof is complete.
\end{proof}


In the case $n = 2$ (so that $D_{n-1}$ is one-dimensional) the formula \eqref{eqn:volume} has an interesting alternative expression. To simplify the notations we write $q = q_1$.

\begin{corollary} \label{cor:volume.1d}
	Suppose $n = 2$ and let $\psi \in \mathcal{C}$ be $C^2$ and (as a function of $q$) $\psi(0) = \psi(1) = 0$. Assume also
	\[
	\lim_{q \rightarrow 0} \psi(q) \psi'(q) = \lim_{q \rightarrow 1} \psi(q) \psi'(q) = 0.
	\]
	Then
	\begin{equation*}
	\mathrm{vol}(\widehat{R}(\psi)) = \frac{1}{2} \int_0^1 (\psi'(q))^2 dq, \quad \mathsf{T}_{\mu}(\psi) = \exp\left( \frac{-\gamma}{2} \int_0^1 (\psi'(q))^2 dq\right).
	\end{equation*}
\end{corollary}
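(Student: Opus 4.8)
The plan is to reduce everything to Theorem~\ref{thm:stokes2} and then perform a one-dimensional integration by parts. When $n = 2$ the coordinate $q = q_1$ ranges over $D_{1} = (0,1)$, the matrix $D^2\psi(q)$ is the scalar second derivative $\psi''(q)$, and the hypothesis $\psi(0) = \psi(1) = 0$ is exactly the requirement $\psi|_{\partial\Delta_2} \equiv 0$ (the two vertices of $\overline{\Delta}_2$ correspond to $q = 0$ and $q = 1$). Thus Theorem~\ref{thm:stokes2} applies and gives
\[
\mathrm{vol}(\widehat{R}(\psi)) = \frac{1}{2}\int_0^1 \psi(q)\,\bigl(-\psi''(q)\bigr)\, dq .
\]

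Next I would integrate by parts, taking care of the endpoints since $\psi''$ need not be integrable near $0$ and $1$ a priori. On $[\varepsilon, 1-\varepsilon]$ one has
\[
\int_\varepsilon^{1-\varepsilon} \psi(q)\bigl(-\psi''(q)\bigr)\, dq = -\bigl[\psi(q)\psi'(q)\bigr]_{q=\varepsilon}^{q=1-\varepsilon} + \int_\varepsilon^{1-\varepsilon} \bigl(\psi'(q)\bigr)^2 dq .
\]
As $\varepsilon \to 0^+$ the boundary term tends to $0$ by the assumed limits $\lim_{q\to 0}\psi(q)\psi'(q) = \lim_{q\to 1}\psi(q)\psi'(q) = 0$. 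Because $\psi$ is concave we have $-\psi'' \ge 0$, so the integrand $\psi(-\psi'')$ is non-negative and $\int_\varepsilon^{1-\varepsilon}\psi(-\psi'')\,dq \uparrow \int_0^1 \psi(-\psi'')\,dq$ by monotone convergence; likewise $\int_\varepsilon^{1-\varepsilon}(\psi')^2\,dq \uparrow \int_0^1 (\psi')^2\,dq$. Letting $\varepsilon \to 0^+$ therefore yields $\int_0^1 \psi(-\psi'')\,dq = \int_0^1 (\psi')^2\,dq$ (both sides a priori possibly $+\infty$, though in fact finite by Theorem~\ref{thm:stokes2}), whence $\mathrm{vol}(\widehat{R}(\psi)) = \frac{1}{2}\int_0^1 (\psi'(q))^2\,dq$.

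Finally, in the case $h(x) \equiv \gamma$ the tail-probability formula of Theorem~\ref{thm:tail.probability} reads $\mathsf{T}_\mu(\psi) = \exp\bigl(-\gamma\,\mathrm{vol}(\widehat{R}(\psi))\bigr)$, and substituting the expression just obtained gives $\mathsf{T}_\mu(\psi) = \exp\bigl(-\tfrac{\gamma}{2}\int_0^1 (\psi'(q))^2\,dq\bigr)$. There is no serious obstacle here: the substance lies entirely in Theorems~\ref{thm:stokes2} and~\ref{thm:tail.probability}, and the only point requiring attention is the improper integration by parts at $q = 0, 1$, which is precisely what the extra boundary hypothesis $\lim \psi\psi' = 0$ is designed to control.
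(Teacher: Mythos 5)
Your proposal is correct and follows essentially the same route as the paper's proof: apply the volume formula with $n=2$ and integrate by parts, with the hypothesis $\lim\psi\psi'=0$ killing the boundary term; your $\varepsilon$-truncation with monotone convergence merely makes explicit the improper integration by parts that the paper performs formally. Invoking Theorem~\ref{thm:stokes2} rather than Theorem~\ref{thm:stoke} is in fact the more accurate citation, since boundedness of $\widehat{R}(\psi)$ is not among the corollary's hypotheses.
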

\begin{proof}
	By Theorem \ref{thm:stoke} we have
	\[
	\mathrm{vol}(\widehat{R}(\psi)) = \frac{1}{2} \int_0^1 \psi(q)(-\psi''(q)) dq.
	\]
	Using integration by parts, this is equal to
	\[
	-\left. \frac{1}{2} \psi'(q) \psi(q) \right|_0^1 + \frac{1}{2} \int_0^1 (\psi'(q))^2 dq = \frac{1}{2} \int_0^1 (\psi'(q))^2 dq.
	\]
\end{proof}

\begin{example}\label{ex:geo.mean}
	Let $n = 2$ and consider the geometric mean $\psi(q) = a \sqrt{q(1 - q)}$ where $a > 0$. See Figure \ref{fig:gm} for an illustration.
	
	\begin{figure}[t]
		\centering
		\includegraphics[scale=0.45]{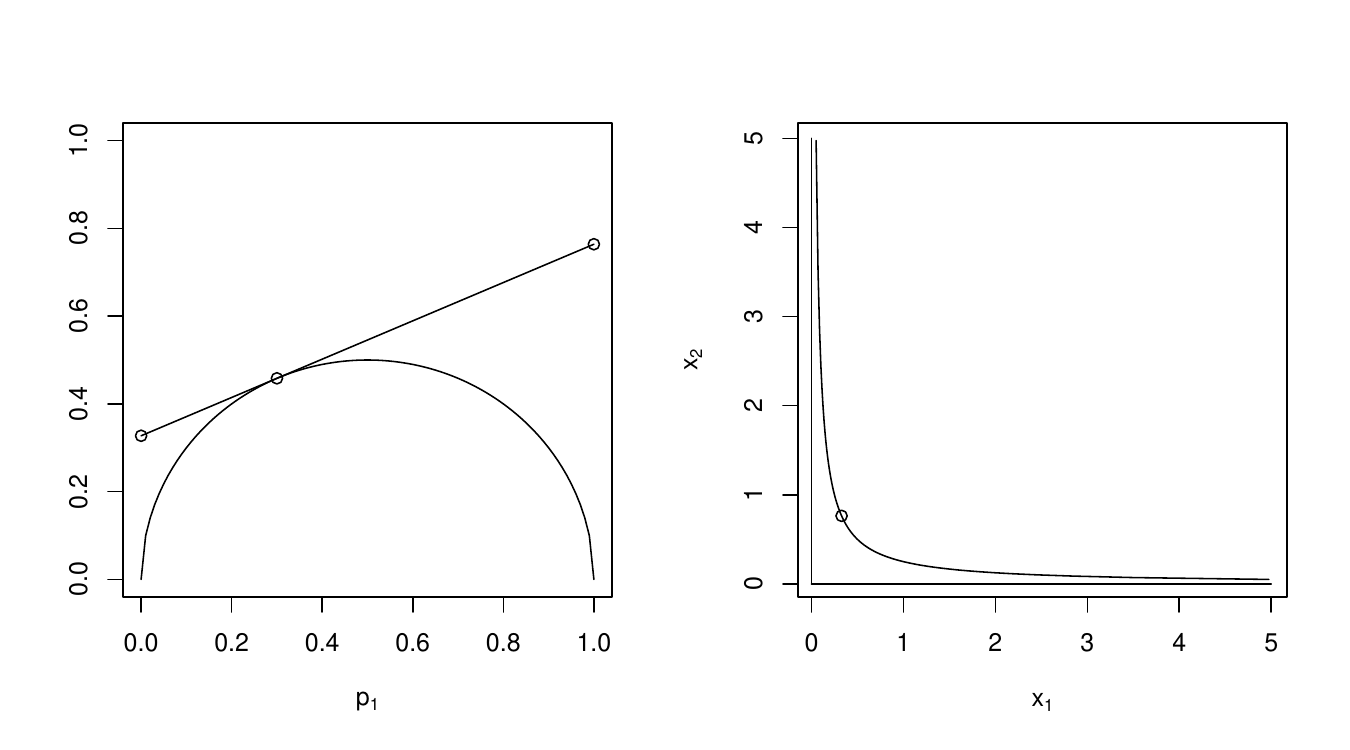}
		\caption{Left: Graph of $\psi(p) = \sqrt{p_1 p_2}$ (here $\gamma = 1$) as well as the tangent line at $p_1 = 0.3$. It defines a point $(x_1, x_2)$ on the boundary of $\widehat{R}(\psi)$. Right: The region $\widehat{R}(\psi)$ and a point on its boundary. The area of $\widehat{R}(\psi)$ is infinite.} \label{fig:gm}
	\end{figure}
	
	Since
	\[
	\psi(q) \psi'(q) = a^2 \sqrt{q(1 - q)} \frac{1 - 2q}{2\sqrt{q(1 - q)}} = a^2 \frac{1 - 2q}{2}
	\]
	does not vanish at $0$ and $1$, we cannot apply Corollary \ref{cor:volume.1d}. Nevertheless, since
	\[
	\int_0^1 \psi(q) (- \psi''(q)) dq = \int_0^1  \frac{a^2}{4x(1 - x)} dx = \infty,
	\]
	by Theorem \ref{thm:stokes2} we have $\mathrm{vol}(\widehat{R}(\psi)) = \infty$. This proves that $\mathsf{T}_{\mu}(\psi) = 0$. This corresponds to the critical exponent in Theorem \ref{thm:boundary.behavior}.
\end{example}

\subsection{Distribution of the portfolio weight} \label{sec:portfolio.weight}
Consider the random function $\Psi$ with distribution $\mu$. It generates, in the sense of \eqref{eqn:fgp}, a portfolio map $\boldsymbol{\pi}$. The portfolio weight $\boldsymbol{\pi}(p)$ is uniquely defined whenever $\Psi$ is differentiable at $p$ (see \cite[Section 2.3]{PW15} for details). As an application of the representation of $\mu$ in terms of the Poisson point process, we derive the distribution of $\boldsymbol{\pi}(p)$ for each fixed $p$.

Let $N$ denote the Poisson point process on $\mathbb{R}_{+}^n$ whose rate measure has density $h$. Thus we may write
\begin{equation} \label{eqn:point.process.representation}
\Psi(p) = \inf_{x \in N} \langle p,x \rangle, \quad p \in \Delta_n.
\end{equation}

Fix $p \in \Delta_n$ and consider the Poisson point process $N(p)$ on $\mathbb{R}_+$ given by $N(p) = \{\langle p,x \rangle: x \in N\}$.  Since the rate measure for $N$ has a density, then so does the rate measure for $N(p)$.  It follows that $N(p)$ has no double points.  In particular with probability 1 there is a unique $Z \in N$ such that $\langle p, Z \rangle $ is the minimum point of $N(p)$, and there exists a (random) $\delta > 0$ such that $\langle p, x \rangle  \geq \langle p, Z \rangle  + \delta$ for all $x \in N \setminus \{Z\}$.

From this observation and the representation \eqref{eqn:point.process.representation}, there exists a (random) neighborhood $U$ of $p$ such that $\Psi(q) = \langle q,Z \rangle$ for $q \in U$. Thus we have the following

\begin{lemma}
	For any fixed $p \in \Delta_n$, the random function $\Psi$ is $\mu$-almost surely differentiable at $p$. Thus the portfolio weight $\boldsymbol{\pi}(p)$ is a.s.-defined.
\end{lemma}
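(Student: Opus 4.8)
The plan is to establish the stronger statement that, $\mu$-almost surely, $\Psi$ agrees with a single positive affine function on some (random) neighborhood of $p$; differentiability at $p$ — indeed local affineness — is then immediate, and $\boldsymbol{\pi}(p)$ is well defined by \cite[Section 2.3]{PW15}. Throughout I would work with the Poisson representation $\Psi(q) = \inf_{x \in N}\langle q, x\rangle$ from \eqref{eqn:point.process.representation}.

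First I would record two facts about $N$. \textbf{(i) Local finiteness.} For every $a > 0$ the region $R(p,a)$ is bounded (it is contained in $[0, a/\min_i p_i]^n$ since $p$ has strictly positive coordinates), and $h$ is locally integrable by Lemma~\ref{lem:assumption.consequences}; hence $m(R(p,a)) < \infty$ and $|N \cap R(p,a)|$ is a.s.\ finite. Since $\mu$ is supported on $\mathcal{C}_+$ we have $\Psi(p) \in (0,\infty)$ a.s., so $N \cap R(p, \Psi(p)+1)$ is a nonempty finite set and the infimum defining $\Psi(p)$ is attained. \textbf{(ii) Simplicity of the projection.} The image of $m$ under the linear surjection $T\colon x \mapsto \langle p, x\rangle$ from $\mathbb{R}_+^n$ onto $(0,\infty)$ is absolutely continuous with respect to Lebesgue measure (preimages of null sets are finite unions of pieces of hyperplanes, hence $m$-null); by the mapping theorem $N(p) := T(N)$ is a Poisson process with this non-atomic intensity, so it is a.s.\ simple — equivalently, $T$ is a.s.\ injective on $N$. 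Combining (i) and (ii): a.s.\ there is a unique $Z \in N$ with $\langle p, Z\rangle = \Psi(p)$, and (since $N(p)$ is locally finite near its minimum) a random $\delta > 0$ with $\langle p, x\rangle \ge \langle p, Z\rangle + \delta$ for all $x \in N \setminus \{Z\}$.

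Next I would prove the local-coincidence claim: there is a random neighborhood $U$ of $p$ with $\Psi(q) = \langle q, Z\rangle$ on $U$. I would argue by contradiction: if no such $U$ exists, pick $q_j \to p$ and $x_j \in N$ with $\langle q_j, x_j\rangle < \langle q_j, Z\rangle$. Continuity of $\Psi$ and of the affine functions gives $\langle q_j, Z\rangle \to \Psi(p)$ and $\Psi(q_j) \to \Psi(p)$, and $\Psi(q_j) \le \langle q_j, x_j\rangle < \langle q_j, Z\rangle$ forces $\langle q_j, x_j\rangle \to \Psi(p)$. Because $p \in \Delta_n$ has strictly positive coordinates, $q_j$ has all coordinates bounded below by some $c > 0$ for large $j$, so $c\,\|x_j\|_1 \le \langle q_j, x_j\rangle$ stays bounded; thus all $x_j$ lie in one bounded subset of $\mathbb{R}_+^n$, on which $N$ is finite by (i). Passing to a subsequence, $x_j \equiv x_\ast \in N$, and $\langle p, x_\ast\rangle = \lim_j \langle q_j, x_j\rangle \le \Psi(p) = \min_{x \in N}\langle p, x\rangle$, so $\langle p, x_\ast\rangle = \Psi(p)$ and $x_\ast = Z$ by uniqueness — contradicting $\langle q_j, x_j\rangle < \langle q_j, Z\rangle$. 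Hence $\Psi$ coincides with the affine map $q \mapsto \langle q, Z\rangle$ on $U$, so it is differentiable at $p$; this happens on an event of $\mu$-probability one, which is exactly the assertion.

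The main obstacle is the local-coincidence step: since $\Psi(q)$ is an infimum over infinitely many hyperplanes, one must rule out that hyperplanes with very large coefficients — negligible at $p$ but potentially undercutting $q \mapsto \langle q, Z\rangle$ arbitrarily close to $p$ — spoil the conclusion. The uniform bound $c\,\|x_j\|_1 \le \langle q_j, x_j\rangle$, valid precisely because $p$ is an interior point, together with local finiteness of $N$, is what excludes this; the remaining ingredients — absolute continuity of $T_\ast m$, the mapping theorem for Poisson processes, and the spectral gap — are routine.
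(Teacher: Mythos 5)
Your proof is correct and follows essentially the same route as the paper: the projected process $N(p)$ is Poisson with non-atomic intensity (cf.\ \eqref{eqn:projected.intensity}), hence simple, which yields a unique minimizer $Z$ and a gap $\delta$, after which $\Psi$ coincides with $q \mapsto \langle q, Z\rangle$ on a neighborhood of $p$. You in fact supply the compactness/contradiction argument for the local-coincidence step that the paper leaves implicit ("From this observation\dots"); the only blemishes are cosmetic --- the phrase "spectral gap" in your last sentence should just be "gap $\delta$", and the preimage under $T$ of a general Lebesgue-null set is a union of hyperplane slices indexed by that set rather than a finite union, though the absolute continuity of $T_{*}m$ is correct and is computed explicitly in \eqref{eqn:projected.intensity}.
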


Again let $Z \in N$ be the point described above. Recall from \eqref{eqn:fgp} that the portfolio weight $\boldsymbol{\pi}(p)$ generated by $\Psi$ is given by
\[
\boldsymbol{\pi}_i(p) = p_i (1 + D_{e_i - p} \log \Psi(p)), \quad 1 \leq i \leq n.
\]
Since $\Psi(q) = \langle q, Z \rangle$ for $q$ near $p$, evaluating the derivative gives
\[
\boldsymbol{\pi}(p) = \left(\frac{p_1 Z_1}{\langle p, Z \rangle}, \ldots,  \frac{p_n Z_n}{\langle p, Z \rangle}\right).
\]

\begin{proposition} \label{prop:portfolio.distribution}
	Suppose $h(x)$ satisfies Assumption 3.1.  Under $\mu$, for any $p \in \Delta_n$ fixed, the portfolio weight $\boldsymbol{\pi}(p) = (Y_1,\ldots,Y_n)$ has density $c\, h(y_1/p_1, \ldots, y_n/p_n)$ with respect to the uniform distribution on $\Delta_n$, where $c$ is a normalizing constant. Equivalently, $(Y_1,\ldots,Y_{n-1})$ has density
	$$c \,h\left(\frac{y_1}{p_1}, \ldots, \frac{y_{n-1}}{p_{n-1}},\frac{1-\sum_{i=1}^{n-1}y_i}{p_n}\right)
	$$
	on the set $D_{n-1} = \{y \in \mathbb{R}^{n-1}_+: \sum_{i=1}^{n-1}y_i < 1\}$.
\end{proposition}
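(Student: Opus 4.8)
The plan is to reduce the statement to a one-dimensional computation about the first point of a Poisson process, after a polar decomposition adapted to the linear functional $\langle p,\cdot\rangle$. Recall from the discussion preceding the statement that $\mu$-almost surely there is a unique $Z\in N$ with $\langle p,Z\rangle=\min_{x\in N}\langle p,x\rangle=\Psi(p)$, and $\boldsymbol{\pi}_i(p)=p_iZ_i/\langle p,Z\rangle$. First I would apply the linear map $x\mapsto y$, $y_i=p_ix_i$; by the mapping theorem $N':=\{(p_1x_1,\dots,p_nx_n):x\in N\}$ is a Poisson point process on $\mathbb{R}_+^n$ with intensity density $g(y):=(p_1\cdots p_n)^{-1}h(y_1/p_1,\dots,y_n/p_n)$. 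By Lemma \ref{lem:assumption.consequences}, $h$ — hence $g$ — is homogeneous of order $\alpha$, and $n+\alpha>0$. Under this map $Z$ corresponds to the point $W\in N'$ minimizing the sum $y_1+\cdots+y_n$, and (since $W_1+\cdots+W_n=\langle p,Z\rangle$) the portfolio weight is $\boldsymbol{\pi}(p)=W/(W_1+\cdots+W_n)$.

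Next I would introduce $\ell^1$-polar coordinates $y=t\theta$ with $t=y_1+\cdots+y_n\in(0,\infty)$ and $\theta\in\Delta_n$, parametrizing $\Delta_n$ by $(\theta_1,\dots,\theta_{n-1})$. A direct Jacobian computation gives $dy=t^{n-1}\,dt\,d\theta$, so the image of $N'$ is a Poisson process $\widetilde N$ on $(0,\infty)\times\Delta_n$ with intensity $t^{n-1}g(t\theta)\,dt\,d\theta=t^{n+\alpha-1}g(\theta)\,dt\,d\theta$, using homogeneity of $g$. The point $W$ corresponds to the point of $\widetilde N$ with the smallest $t$-coordinate, and $\boldsymbol{\pi}(p)$ is exactly the $\theta$-coordinate $\Theta$ of that point.

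Finally I would compute the law of $\Theta$. Projecting $\widetilde N$ to the $t$-axis gives a Poisson process on $(0,\infty)$ with intensity $\bigl(\int_{\Delta_n}g(\theta)\,d\theta\bigr)t^{n+\alpha-1}\,dt$, which is locally finite near $0$ since $n+\alpha>0$ and $g$ is locally integrable (Lemma \ref{lem:assumption.consequences}); hence there is a.s.\ a smallest value $T>0$, with $\mathbb{P}(T>t)=e^{-F(t)}$ where $F(t):=\tfrac{t^{n+\alpha}}{n+\alpha}\int_{\Delta_n}g$. The first-point identity $\mathbb{P}(T\in dt,\,\Theta\in d\theta)=e^{-F(t)}\,t^{n+\alpha-1}g(\theta)\,dt\,d\theta$ (no point of $\widetilde N$ below level $t$, one point in $dt\times d\theta$) then gives, after integrating $t$ out, that $\Theta$ has density $\bigl(\int_{\Delta_n}g\bigr)^{-1}g(\theta)$ with respect to Lebesgue on $\Delta_n$; the key point is that $\int_0^\infty e^{-F(t)}t^{n+\alpha-1}\,dt=\bigl(\int_{\Delta_n}g\bigr)^{-1}$ is independent of $\theta$, and this is precisely where homogeneity of $h$ is used. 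Since $g(\theta)$ is a constant multiple of $h(\theta_1/p_1,\dots,\theta_n/p_n)$ and $\boldsymbol{\pi}(p)=\Theta$, this is the claimed density with respect to the uniform distribution on $\Delta_n$; the ``equivalently'' form follows by substituting $\theta_n=1-\sum_{i<n}\theta_i$. I expect the only genuinely delicate point to be making the first-point identity rigorous (measurability of $T$ and $\Theta$ and the disintegration), which is classical — e.g.\ via a limiting argument over levels $t_k\downarrow0$ or the Mecke equation — so I anticipate no real obstacle.
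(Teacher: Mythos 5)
Your proof is correct and is essentially the paper's argument: both rest on disintegrating the minimizing Poisson point into its ``radial'' coordinate $\langle p,\cdot\rangle$ and its ``angular'' coordinate on the simplex, with homogeneity of $h$ (Lemma \ref{lem:assumption.consequences}) making the angular law independent of the radial value. Your $\ell^1$-polar factorization of the intensity and explicit first-point identity just make rigorous the conditional-density claim that the paper asserts and then integrates out; there is no substantive difference in approach.
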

\begin{proof}
	For fixed $p \in \Delta_n$ we consider the distribution of $Z$ conditional on the value of $a = \langle p,Z \rangle$.  Constants $c_1, c_2, \ldots$ will denote a sequence of normalizing constants.  Conditioned on the value of $a = \langle p,Z \rangle $, the conditional distribution of $Z = (Z_1,\ldots,Z_n)$ has density $c_1 h(z_1,\ldots,z_n)$ with respect to uniform measure on the simplex $\{z \in \mathbb{R}^n_+ : \sum_{i=1}^n p_i z_i = a\}$.  More precisely, the conditional distribution of $(Z_1, \ldots,Z_{n-1})$ has density
	$$
	c_1h\left(z_1, \ldots,z_{n-1},(a-\sum_{i=1}^{n-1}p_iz_i )/p_n\right)
	$$
	with respect to Lebesgue measure on the set $\{z \in \mathbb{R}^{n-1}_+ : \sum_{i=1}^{n-1} p_i z_i < a \}$.  Now define $Y = (Y_1,\ldots,Y_n) \in \mathbb{R}^n_+$ by $Y_i = p_i Z_i/\langle p,Z\rangle$, so that $\boldsymbol{\pi}(p) = Y$.  Conditioned on $\langle p,Z \rangle = a$, we have $Y_i = p_i Z_i/a$ and so the conditional distribution of $(Y_1, \ldots,Y_{n-1})$ has density
	$$
	c_2 h\left(\frac{ay_1}{p_1}, \ldots, \frac{ay_{n-1}}{p_{n-1}} ,\frac{a-\sum_{i=1}^{n-1}a y_i}{p_n}\right)  =   c_3 h\left(\frac{y_1}{p_1}, \ldots, \frac{y_{n-1}}{p_{n-1}} ,\frac{1-\sum_{i=1}^{n-1} y_i}{p_n}\right)
	$$
	with respect to Lebesgue measure on the set $D_{n-1}$.  On the right side of this expression the points where $h$ is evaluated do not depend on $a$, and the set $D_{n-1}$ does not depend on $a$, so the normalizing constant $c_3$ does not depend on $a$. Since this conditional density does not depend on the value of $a$, it is the unconditional density of $(Y_1, \ldots, Y_{n-1})$, and the proof is complete.
\end{proof}

The next proposition computes the expected value of $\boldsymbol{\pi}(p)$ at a fixed $p$. In words, it states that $\mathbb{E} [\boldsymbol{\pi}(p)]$ is equal to the portfolio $\overline{\boldsymbol{\pi}}(p)$ generated by the expected value $\overline{\Psi} = \mathbb{E} [\Psi(\cdot)]$. It is an interesting problem to study the properties of the random portfolio map $\boldsymbol{\pi}(\cdot): \Delta_n \rightarrow \overline{\Delta}_n$ and their implications in optimal transport and Cover's universal portfolio.

\begin{proposition} \label{prop:expectation.portfolio} Under Assumption 3.1, the expected value of the portfolio weight $\boldsymbol{\pi}(p)$ generated by $\Psi$ is the same as the portfolio weight $\overline{\boldsymbol{\pi}}(p)$ generated by the expectation $\overline{\Psi}(\cdot) = \mathbb{E}\Psi(\cdot)$.
\end{proposition}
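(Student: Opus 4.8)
The plan is to compute $\mathbb{E}[\boldsymbol{\pi}(p)]$ and $\overline{\boldsymbol{\pi}}(p)$ in closed form and match them coordinate by coordinate. Fix $p\in\Delta_n$, write $S(p):=\{x\in\mathbb{R}_+^n:\langle p,x\rangle=1\}$ for the (bounded) affine slice bounding $R(p,1)$, and set $G(p):=\int_{R(p,1)}h(x)\,dx$ and $\widetilde G_i(p):=\int_{R(p,1)}x_i\,h(x)\,dx$; both are finite by Lemma~\ref{lem:assumption.consequences}. The elementary ingredient is a pair of identities obtained from the homogeneity of $h$ and the co-area (polar) decomposition of $R(p,1)$ along rays through the origin, namely
\[
\int_{S(p)}h\,d\mathcal{H}^{n-1}=(n+\alpha)\,|p|\,G(p),\qquad \int_{S(p)}x_i\,h\,d\mathcal{H}^{n-1}=(n+\alpha+1)\,|p|\,\widetilde G_i(p),
\]
which let me pass freely between surface integrals over $S(p)$ and solid integrals over $R(p,1)$ (the Euclidean norm $|p|$ will always cancel).

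For the left-hand side, I would use the formula $\boldsymbol{\pi}_i(p)=p_iZ_i/\langle p,Z\rangle$ recorded just before the proposition together with Proposition~\ref{prop:portfolio.distribution}, which says $\boldsymbol{\pi}(p)$ has density proportional to $h(y_1/p_1,\dots,y_n/p_n)$ with respect to the uniform measure on $\Delta_n$. The linear change of variables $u_j=y_j/p_j$ maps $\Delta_n$ bijectively onto $S(p)$ and pushes the uniform measure forward to a constant multiple of $\mathcal{H}^{n-1}|_{S(p)}$; since $y_i=p_iu_i$ and the constant cancels in the ratio, this gives
\[
\mathbb{E}[\boldsymbol{\pi}_i(p)]=p_i\,\frac{\int_{S(p)}u_i\,h\,d\mathcal{H}^{n-1}}{\int_{S(p)}h\,d\mathcal{H}^{n-1}}=p_i\,\frac{(n+\alpha+1)\,\widetilde G_i(p)}{(n+\alpha)\,G(p)}
\]
by the two identities above, for every $i=1,\dots,n$.

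For the right-hand side, I would invoke Corollary~\ref{cor:one.point}: $\overline{\Psi}(p)=\mathbb{E}[\Psi(p)]=\Gamma\!\big(\tfrac1{n+\alpha}+1\big)G(p)^{-1/(n+\alpha)}$, hence $\log\overline{\Psi}=\text{const}-\tfrac1{n+\alpha}\log G$ and, by \eqref{eqn:fgp}, $\overline{\boldsymbol{\pi}}_i(p)=p_i\big(1-\tfrac1{n+\alpha}D_{e_i-p}\log G(p)\big)$. The only remaining task is $D_{e_i-p}G(p)$. Moving $p$ in the tangent direction $e_i-p$ tilts the bounding hyperplane of $R(p,1)$, and a Hadamard/Reynolds-type variational formula gives $D_{e_i-p}G(p)=-\tfrac1{|p|}\int_{S(p)}\langle e_i-p,x\rangle\,h(x)\,d\mathcal{H}^{n-1}(x)$; since $\langle e_i-p,x\rangle=x_i-1$ on $S(p)$, the two identities above turn this into $D_{e_i-p}G(p)=(n+\alpha)G(p)-(n+\alpha+1)\widetilde G_i(p)$. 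Substituting back,
\[
\overline{\boldsymbol{\pi}}_i(p)=p_i\Big(1-\tfrac{1}{n+\alpha}\,\tfrac{(n+\alpha)G(p)-(n+\alpha+1)\widetilde G_i(p)}{G(p)}\Big)=p_i\,\frac{(n+\alpha+1)\,\widetilde G_i(p)}{(n+\alpha)\,G(p)},
\]
which is exactly $\mathbb{E}[\boldsymbol{\pi}_i(p)]$, proving the proposition. As a sanity check, under Assumption~\ref{ass:density.B} one has $G(p)=M\prod_j p_j^{-(1+\alpha_j)}$ by \eqref{eqn:h.integral.Rp}, and both sides collapse to the constant weight $\pi_i=(1+\alpha_i)/(n+\alpha)$, consistent with $\overline{\Psi}$ then being the weighted geometric mean that generates $\boldsymbol{\pi}\equiv\pi$.

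The conceptual content is thus minimal; the work is measure-theoretic. The two obstacles I anticipate are: (i) justifying the co-area/homogeneity identities for a merely measurable, locally integrable $h$, so that the trace of $h$ on the particular slice $S(p)$ is well defined and the polar decomposition is legitimate; and (ii) establishing the variational formula for $D_{e_i-p}G(p)$ — equivalently, differentiability of $\overline{\Psi}$ at $p$ — without assuming $h$ continuous, e.g.\ via a symmetric-difference estimate $|G(p+sv)-G(p)|\le\int_{R(p,1)\triangle R(p+sv,1)}h$ with dominated convergence on the bounded region where the relevant mass of $h$ concentrates near $S(p)$. Under the explicit Assumption~\ref{ass:density.B} both points are classical.
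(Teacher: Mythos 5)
Your proof is correct and follows the same strategy as the paper's: both arguments combine the density of $\boldsymbol{\pi}(p)$ from Proposition \ref{prop:portfolio.distribution} with Corollary \ref{cor:one.point}, reduce $\overline{\boldsymbol{\pi}}_i(p)$ to $-D_{e_i-p}F/\bigl((n+\alpha)F\bigr)$ with $F(p)=\int_{R(p,1)}h$, and match the two ratios using the homogeneity of $h$. The only real difference is bookkeeping: the paper changes variables by $y_i=p_ix_i$ onto $D_{n-1}$, obtains the closed forms for $F(p)$ and $D_{e_i}F(p)$ there, and reads off $D_{e_i}\log\overline{\Psi}=\tfrac1{p_i}\mathbb{E}\boldsymbol{\pi}_i(p)$ directly, whereas you work with $\mathcal{H}^{n-1}$-integrals on the slice $S(p)$ and a Hadamard-type shape derivative. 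Both of the technical worries you flag dissolve if you note that $h$ is \emph{exactly} homogeneous (Lemma \ref{lem:assumption.consequences}), so in cone coordinates one has the explicit identity
\[
G(p+sv)=\frac{1}{(n+\alpha)\,|p|}\int_{S(p)}h(\omega)\,\bigl(1+s\langle v,\omega\rangle\bigr)^{-(n+\alpha)}\,d\mathcal{H}^{n-1}(\omega),
\]
valid because $\{t\omega:\omega\in S(p),\,0<t<(1+s\langle v,\omega\rangle)^{-1}\}=R(p+sv,1)$; your two slice identities are the cases $s=0$ (and its $x_i$-weighted analogue), integrability of $h$ on $S(p)$ follows from $G(p)<\infty$, and the variational formula is just differentiation under the integral sign of this expression ($\langle v,\omega\rangle$ being bounded on the bounded set $S(p)$), with no continuity of $h$ needed. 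So your route is sound and, if anything, makes explicit the closed form $\mathbb{E}\boldsymbol{\pi}_i(p)=p_i\,(n+\alpha+1)\widetilde G_i(p)/\bigl((n+\alpha)G(p)\bigr)$, which the paper leaves implicit as a ratio of integrals over $D_{n-1}$.
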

\begin{proof}   For $p \in \mathbb{R}^n_+$ define $R(p) = \{x \in \mathbb{R}^n_+: \sum_{i=1}^n p_ix_i < 1\}$ and
	$$
	F(p) = \int_{R(p)} h(x)dx.
	$$
	Recall the set $D_{n-1} = \{y \in R^{n-1}_+: \sum_{i=1}^{n-1} y_i <1\}$ and the notational convenience $y_n = 1-\sum_{i=1}^{n-1} y_i$.  Then (using elementary multivariate analysis and the scaling property of $h$) we have
	$$
	D_{e_i}F(p) = -\frac{1}{p_1\cdots p_n} \int_{D_{n-1}} \frac{y_i}{p_i} h\left(\frac{y_1}{p_1},\cdots ,\frac{y_n}{p_n}\right)dy_1\cdots dy_{n-1}
	$$
	for $1 \le i \le n$, and
	$$
	F(p) = \frac{1}{(n+\alpha)p_1\cdots p_n} \int_{D_{n-1}}  h\left(\frac{y_1}{p_1},\cdots,\frac{y_{n-1}}{p_{n-1}},\frac{y_n}{p_n}\right)dy_1\cdots dy_{n-1}.
	$$
	From Corollary \ref{cor:one.point} we have
	$$
	\overline{\Psi}(p) = \Gamma\left(\frac{1}{n+\alpha} +1\right)(F(p))^{-\frac{1}{n+\alpha}}
	$$
	and then
	\begin{eqnarray*}
		D_{e_i}\log \overline{\Psi}(p) &=& - \frac{D_{e_i}F(p)}{(n+\alpha)F(p)} \\
		&=& \dfrac{ \int_{D_{n-1}} \frac{y_i}{p_i} h\left(\frac{y_1}{p_1},\cdots ,\frac{y_n}{p_n}\right)dy_1\cdots dy_{n-1}}{\int_{D_{n-1}}  h\left(\frac{y_1}{p_1},\cdots ,\frac{y_n}{p_n}\right)dy_1\cdots dy_{n-1}}\\
		&=& \frac{1}{p_i}\mathbb{E}\boldsymbol{\pi}_i(p).
	\end{eqnarray*}
	Therefore, using \eqref{eqn:fgp}, the portfolio weight generated by $\overline{\Psi}$ is   $$
	p_i\left(1+D_{e_i-p} \log \overline{\Psi}(p)\right) = p_i\left(1+\frac{1}{p_i}\mathbb{E}\boldsymbol{\pi}(p)_i-\sum_{j=1}^n
	p_j \frac{1}{p_j}\mathbb{E}\boldsymbol{\pi}(p)_j\right) = \mathbb{E}\boldsymbol{\pi}_i(p),
	$$
	and the proof is complete.
\end{proof}

\begin{example} [Geometric mean and the constant-weighted portfolio]
	Suppose Assumption 4.3 holds. By Proposition \ref{prop:one.point.expectation}, the expected value $\overline{\Psi}(\cdot)$ is a multiple of the geometric mean $p_1^{\pi_1} \cdots p_n^{\pi_n}$ which generates the constant-weighted portfolio $\overline{\boldsymbol{\pi}}(p) \equiv \pi_i$. By Proposition \ref{prop:expectation.portfolio}, if $\boldsymbol{\pi}$ is the (random) portfolio map generated by $\Psi$, then $\mathbb{E} \boldsymbol{\pi}(p) = \overline{\boldsymbol{\pi}}(p) \equiv \pi$.
\end{example}

\section{Diagonal limits} \label{sec:diagonal}
In this final section we study the model \eqref{eqn:phi} where the parameter $\lambda_K$ of the softmin depends on $K$, the number of hyperplanes. We have seen that when $\lambda_K \equiv \lambda < \infty$ is fixed there is a deterministic almost sure limit, and when $\lambda_K \equiv \infty$ a suitable scaling gives a non-trivial weak limit which can be described by a Poisson point process. Here we want to find explicit rates for $\lambda_K$ which give possibly different limiting behaviors.  

\subsection{Main results}
Before stating the main results we first set up some notations. Let $X_1, X_2, \ldots$ be i.i.d.~copies of $C$, where $C$ is the random vector in \eqref{eqn:random.affine}. Thus we may write the $k$-th hyperplane as $\ell_k(p) = \langle p, X_k \rangle$. For $\lambda > 0$ and $K \geq 1$, let $\Phi_K(p) = \min_{1 \leq k \leq K} \langle p, X_k \rangle$ and
\begin{equation} \label{eqn:Phi.lambda.K}
\Phi_{\lambda, K}(p) := \mathsf{m}_{\lambda} \{ \langle p, X_k \rangle : 1 \leq k \leq K\} = \frac{-1}{\lambda} \log \left( \frac{1}{K} \sum_{k = 1}^K e^{-\lambda \langle p, X_k \rangle }\right).
\end{equation}
Throughout Section \ref{sec:diagonal} we work under Assumption \ref{ass:density}. We let $\Psi$ be a random concave function under the limiting distribution $\mu$ in Theorem \ref{thm:weak.convergence}. Note that it is the weak limit of the scaled hardmin $ K^{\frac{1}{n+\alpha}} \Phi_K$.

The following are the main results of this section. First we give
 the case where the weak limit can be related to the hardmin limit $\Psi$.

\begin{theorem}  \label{thm:diagonal12}
	Let Assumption \ref{ass:density} be in force.  Suppose $\frac{\lambda_K}{K^{1/(n+\alpha)}} \to\infty$ as $K \to \infty$.  Then $K^{\frac{1}{n+\alpha}} \Phi_{\lambda_K, K} - \frac{K^{1/(n+\alpha)}\log K}{\lambda_K}$ converges weakly to $\Psi$.  In particular:
	\begin{enumerate}
		\item[(i)] if $\frac{\lambda_K }{ K^{1/(n + \alpha)}  \log K} \rightarrow \infty$ as $K \rightarrow \infty$, then $K^{\frac{1}{n+\alpha}} \Phi_{\lambda_K, K}$ converges weakly to $\Psi$; and
		\item[(ii)] if $\frac{\lambda_K }{ K^{1/(n + \alpha)}  \log K} \rightarrow c$ as $K \rightarrow \infty$, where $c > 0$ is a fixed constant, then $K^{\frac{1}{n+\alpha}} \Phi_{\lambda_K, K}$ converges weakly to $\Psi + c$ which is a deterministic shift of $\Psi$.
	\end{enumerate}
\end{theorem}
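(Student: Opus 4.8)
The plan is to compare $K^{1/(n+\alpha)}\Phi_{\lambda_K,K}$, after the indicated shift, with the scaled hardmin $K^{1/(n+\alpha)}\Phi_K$, whose weak limit $\Psi$ is already supplied by Theorem~\ref{thm:weak.convergence}, and to show the difference is asymptotically negligible. Set $s_K := \lambda_K/K^{1/(n+\alpha)}$ (so $s_K\to\infty$ by hypothesis) and $c_K := K^{1/(n+\alpha)}(\log K)/\lambda_K = s_K^{-1}\log K$. Applying the elementary identity $\mathsf{m}_\lambda(x_1,\dots,x_K) = \min_j x_j + \frac{\log K}{\lambda} - \frac1\lambda\log\sum_k e^{-\lambda(x_k-\min_j x_j)}$ to $x_k=\langle p,X_k\rangle$ gives the pointwise decomposition
\[
K^{1/(n+\alpha)}\Phi_{\lambda_K,K}(p)-c_K = K^{1/(n+\alpha)}\Phi_K(p)-E_K(p),\qquad E_K(p):=\frac{K^{1/(n+\alpha)}}{\lambda_K}\log\Bigl(\sum_{k=1}^{K}e^{-\lambda_K(\langle p,X_k\rangle-\Phi_K(p))}\Bigr),
\]
and $0\le E_K(p)\le c_K$ because the sum lies in $[1,K]$. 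Since Theorem~\ref{thm:weak.convergence} gives $K^{1/(n+\alpha)}\Phi_K\Rightarrow\Psi$ in $\mathcal{C}$, the whole statement reduces to showing that $E_K$ is negligible, pointwise in probability and uniformly on compact subsets of $\Delta_n$; the two ``in particular'' assertions then follow by writing $K^{1/(n+\alpha)}\Phi_{\lambda_K,K}=(K^{1/(n+\alpha)}\Phi_{\lambda_K,K}-c_K)+c_K$ and noting that the deterministic sequence $c_K$ tends to $0$ in case~(i) and to a finite positive limit (the stated shift) in case~(ii), while adding a convergent deterministic sequence preserves weak convergence.

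The main obstacle is the estimate $E_K(p)\to0$ in probability for each fixed $p\in\Delta_n$. Heuristically this reflects that the second, third, and further smallest of the $\langle p,X_k\rangle$ sit at distance of order $K^{-1/(n+\alpha)}$ above the minimum (as reflected in Proposition~\ref{prop:finite.dimensional}), whereas the softmin resolves scales of order $1/\lambda_K=s_K^{-1}K^{-1/(n+\alpha)}$, which is smaller by the factor $s_K\to\infty$; so the softmin of the $\langle p,X_k\rangle$ essentially coincides with their hardmin. To make this rigorous I would use $\log(1+t)\le t$ to get $E_K(p)\le s_K^{-1}\Sigma_K(p)$, with $\Sigma_K(p):=\sum_{k:\,\langle p,X_k\rangle>\Phi_K(p)}e^{-\lambda_K(\langle p,X_k\rangle-\Phi_K(p))}$, and then bound $\mathbb{E}[\Sigma_K(p)]$ via the layer-cake identity $\Sigma_K(p)=\lambda_K\int_0^\infty(N_p(\Phi_K(p)+v)-1)e^{-\lambda_K v}\,dv$, where $N_p(t)$ counts the $k\le K$ with $\langle p,X_k\rangle\le t$. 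Conditioning on $\Phi_K(p)=\min_k\langle p,X_k\rangle$, whose law is explicit, one obtains $\mathbb{E}[N_p(\Phi_K(p)+v)-1]=K(K-1)\int_0^\infty f_p(m)(1-F_p(m))^{K-2}(F_p(m+v)-F_p(m))\,dm$, where $F_p,f_p$ are the distribution function and density of $\langle p,C\rangle$. By Assumption~\ref{ass:density} and Lemma~\ref{lem:assumption.consequences}, $F_p(t)\sim a_p t^{n+\alpha}$ as $t\to0^+$, so $\Phi_K(p)$ has order $K^{-1/(n+\alpha)}$; substituting $v=w/\lambda_K$ (and using $\lambda_K^{-1}=s_K^{-1}K^{-1/(n+\alpha)}$) cancels the powers of $K$, and a routine estimate yields $\mathbb{E}[\Sigma_K(p)]=O\bigl(s_K^{-\min(1,\,n+\alpha)}\bigr)$, hence $\mathbb{E}[E_K(p)]=O\bigl(s_K^{-1-\min(1,\,n+\alpha)}\bigr)\to0$; Markov's inequality then gives $E_K(p)\to0$ in probability. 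Since the asymptotic $F_p(t)\sim a_p t^{n+\alpha}$ is uniform for $p$ in a compact slice $\Delta_{n,k}$ (built into Assumption~\ref{ass:density}), the same argument should give $\sup_{p\in\Delta_{n,k}}E_K(p)\to0$ in probability for every $k$; the delicate point will be making this uniform-in-$p$ control precise.

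Granting the key estimate, the remainder follows the proof of Theorem~\ref{thm:weak.convergence}. For finitely many distinct $p^{(1)},\dots,p^{(r)}\in\Delta_n$, the vector $\bigl(K^{1/(n+\alpha)}\Phi_{\lambda_K,K}(p^{(i)})-c_K\bigr)_i=\bigl(K^{1/(n+\alpha)}\Phi_K(p^{(i)})\bigr)_i-\bigl(E_K(p^{(i)})\bigr)_i$ converges weakly, by a converging-together (Slutsky) argument together with Proposition~\ref{prop:finite.dimensional}, to the joint law of $\bigl(\Psi(p^{(i)})\bigr)_i$. For tightness of $\{K^{1/(n+\alpha)}\Phi_{\lambda_K,K}-c_K\}$ in the space of concave functions on $\overline{\Delta}_n$ with local uniform convergence on $\Delta_n$, I would bound it above by $K^{1/(n+\alpha)}\Phi_K\le n\,K^{1/(n+\alpha)}\Phi_K(\overline{e})$ using Lemma~\ref{lem:concave.bound} (tight since its law converges) and below by $K^{1/(n+\alpha)}\Phi_K-\sup_{\Delta_{n,k}}E_K$ on each slice (both terms tight), and apply the extension of Lemma~\ref{lem:compact} to families of concave functions uniformly bounded on compact subsets. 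By Prokhorov's theorem and Lemma~\ref{lem:unique}, any subsequential weak limit has the finite-dimensional distributions of the law $\mu$ of $\Psi$ and hence equals $\mu$, which proves $K^{1/(n+\alpha)}\Phi_{\lambda_K,K}-K^{1/(n+\alpha)}(\log K)/\lambda_K\Rightarrow\Psi$. Adding back $c_K$ as above then gives (i) and (ii).
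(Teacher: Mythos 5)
Your overall architecture coincides with the paper's: your identity $K^{1/(n+\alpha)}\Phi_{\lambda_K,K}-c_K=K^{1/(n+\alpha)}\Phi_K-E_K$ is exactly the decomposition \eqref{eqn:computation.Theta}--\eqref{eqn:Theta}, and the theorem is reduced in the same way to showing $E_K(p)\to 0$ in probability at each fixed $p$, followed by finite-dimensional convergence and tightness. Where you genuinely diverge is the proof of the key estimate. The paper (Theorem \ref{thm:tail.plim}) never computes the expectation of the centered sum $\Sigma_K(p)$: it uses the convergence of the rescaled point process $Y^{(K)}(p)$ to the Poisson process $N(p)$ to secure, with high probability, a positive gap between the two smallest points and a bounded number of points in a window $(0,2L)$, kills those finitely many terms by $Me^{-c/\epsilon}$, and controls the far terms through the expectation of the \emph{uncentered} sum $\sum_k e^{-\langle p,Y^{(K)}_k\rangle/(2\epsilon)}$ (Corollary \ref{cor:Yestimate}). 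Your first-moment computation of $\Sigma_K(p)$ by conditioning on the minimum is a legitimate alternative and even yields a quantitative rate; I have checked that $O(s_K^{-\min(1,\,n+\alpha)})$ is consistent with the scaling $F_p(t)\sim a_p t^{n+\alpha}$ for every $n+\alpha>0$. To make your ``routine estimate'' rigorous you will need the local density bound $\widetilde{\varrho}(t)\le A t^{n+\alpha-1}$ for $t\le\delta$ together with a separate (exponentially negligible) treatment of the region $m>\delta$ where the asymptotic for $F_p$ is unavailable; this is precisely what Lemma \ref{lem:Xestimate}(i) supplies, so the step is sound but not free.

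The one genuine soft spot is tightness. You bound $\widehat{\Psi}_K$ from below on each slice by $K^{1/(n+\alpha)}\Phi_K-\sup_{\Delta_{n,k}}E_K$ and assert both terms are tight, but tightness of $\sup_{\Delta_{n,k}}E_K$ is exactly the uniform-in-$p$ control you flag as delicate and do not establish; and the deterministic bound $0\le E_K\le c_K=s_K^{-1}\log K$ does not rescue you, because under the hypothesis $s_K\to\infty$ alone $c_K$ may be unbounded (take $\lambda_K=K^{1/(n+\alpha)}\sqrt{\log K}$). The uniform control is in fact unnecessary: a concave function on a compact sub-simplex $\Omega_r$ is bounded below by its minimum over the vertices $e_i^{(r)}$ and then bounded above via its value at $\overline{e}$ (Lemma \ref{lem:C tilde bound}, used in the proof of Theorem \ref{thm:diagonal3}), so tightness in the space of (possibly signed) concave functions follows from the convergence in distribution at the finitely many points $e_i^{(r)}$, $\overline{e}$, which you already have. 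Replace your tightness step by this argument and the proof closes. Finally, a small point of bookkeeping: in case (ii) your argument gives $c_K=K^{1/(n+\alpha)}\log K/\lambda_K\to 1/c$, so the shift produced is $1/c$; the ``$\Psi+c$'' in the statement appears to be a slip, and your phrase ``the stated shift'' glosses over the discrepancy.
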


Next we consider a case where $\lambda_K$ grows less quickly than in Theorem \ref{thm:diagonal12} above.  Here the effect of the additive normalization is much stronger, and the weak limit may not be supported on $\mathcal{C}$, the space of non-negative (continuous) concave functions on $\overline{\Delta}_n$.  Indeed we will show by an example that the limit may become negative.

\begin{theorem}   \label{thm:diagonal3}
	Let Assumption \ref{ass:density} be in force.  Suppose $\frac{\lambda_K}{K^{1/(n+\alpha)}} \to c$ as $K \to \infty$, where $c > 0$ is a fixed constant.  Then the sequence
	\[
	K^{\frac{1}{n+\alpha}} \Phi_{\lambda_K, K} -  \frac{K^{1/(n+\alpha)}\log K}{\lambda_K}
	\]
	converges weakly to a random concave function $\widetilde{\Psi}_c$ on $\Delta_n$.
\end{theorem}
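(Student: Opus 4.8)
The plan is to mirror the proof of Theorem \ref{thm:weak.convergence}: prove convergence of the finite-dimensional distributions, prove tightness, and then identify the limit. Write $b_K = K^{1/(n+\alpha)}$ and $\mu_K = \lambda_K/b_K$, so that $\mu_K \to c$ by hypothesis, and set $X_k^{(K)} = b_K X_k$. A direct rewriting of \eqref{eqn:Phi.lambda.K} gives, for $p \in \Delta_n$,
\[
V_K(p) := K^{\frac{1}{n+\alpha}}\Phi_{\lambda_K, K}(p) - \frac{K^{1/(n+\alpha)}\log K}{\lambda_K} = \frac{-1}{\mu_K}\log\Big(\sum_{k=1}^K e^{-\mu_K\langle p, X_k^{(K)}\rangle}\Big),
\]
i.e.\ the factor $\tfrac1K$ inside the softmin is absorbed exactly by the additive correction. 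The natural candidate for the limit is
\[
\widetilde{\Psi}_c(p) = \frac{-1}{c}\log\Big(\sum_{x \in N} e^{-c\langle p, x\rangle}\Big) = \frac{-1}{c}\log\Big(\int_{\mathbb{R}_+^n} e^{-c\langle p, y\rangle}\,N(dy)\Big),
\]
a softmin of parameter $c$ over the Poisson hyperplanes of Theorem \ref{thm:Poisson} (with intensity $h(y)\,dy$), without averaging; as $c \to \infty$ it degenerates to the hardmin limit $\Psi$, whereas for finite $c$ it need not extend continuously or nonnegatively to $\overline{\Delta}_n$. One first checks that $\widetilde{\Psi}_c$ is a.s.\ a finite concave function on $\Delta_n$: concavity is as in Lemma \ref{lem:softmin}(iii) (a sum of log-affine functions is log-convex), and finiteness and positivity of $\int e^{-c\langle p, y\rangle}N(dy)$ at a fixed $p$ follow from $\mathbb{E}\int e^{-c\langle p,y\rangle}N(dy) = \int e^{-c\langle p,y\rangle}h(y)\,dy$, which is finite (local integrability of $h$ at the origin by Lemma \ref{lem:assumption.consequences}, together with linear growth of $\langle p, y\rangle$ against polynomial growth of $h$ at infinity) and positive (since $h$ is homogeneous of order $\alpha > -n$ one has $\int_{\mathbb{R}_+^n} h = \infty$, so $N$ is a.s.\ nonempty). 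Being a finite concave function on the open simplex, $\widetilde{\Psi}_c$ is automatically locally Lipschitz, hence a genuine random element of the space $\mathcal{C}^{*}$ of continuous concave functions on $\Delta_n$ with the topology of local uniform convergence (the metric \eqref{eqn:C.metric} extends verbatim).

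For the finite-dimensional distributions, fix distinct $p^{(1)}, \dots, p^{(r)} \in \Delta_n$ and consider the random vector $U_K = \big(\sum_{k=1}^K e^{-\mu_K\langle p^{(j)}, X_k^{(K)}\rangle}\big)_{j=1}^r$. Since the $X_k$ are i.i.d.\ with density $\rho$, its joint Laplace transform factorises as $\big(1 - \tfrac1K\Theta_K\big)^K$ with
\[
\Theta_K = K\int_{\mathbb{R}_+^n}\Big(1 - e^{-\sum_j t_j e^{-\lambda_K\langle p^{(j)}, x\rangle}}\Big)\rho(x)\,dx = \int_{\mathbb{R}_+^n}\Big(1 - e^{-\sum_j t_j e^{-\mu_K\langle p^{(j)}, y\rangle}}\Big)b_K^{\alpha}\rho(b_K^{-1}y)\,dy.
\]
By Assumption \ref{ass:density}, $b_K^{\alpha}\rho(b_K^{-1}y) \to h(y)$ pointwise, so the goal is to justify $\Theta_K \to \int_{\mathbb{R}_+^n}(1 - e^{-\sum_j t_j e^{-c\langle p^{(j)}, y\rangle}})\,h(y)\,dy$; this identifies the weak limit of $U_K$ as $\big(\int e^{-c\langle p^{(j)},y\rangle}N(dy)\big)_j$, and then $\mu_K \to c$ together with the continuous map $x \mapsto -\tfrac1c\log x$ on $(0,\infty)$ yields $(V_K(p^{(j)}))_j \Rightarrow (\widetilde{\Psi}_c(p^{(j)}))_j$. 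The passage to the limit in $\Theta_K$ is the main obstacle of the proof, a ``Laplace transform'' analogue of \eqref{eqn:h.integral}: Assumption \ref{ass:density} controls $b_K^{\alpha}\rho(b_K^{-1}y)$ only where $b_K^{-1}y$ is small, while the integrand decays only exponentially. I would handle it by splitting the $x$-integral at $\{|x|_1 \le \varepsilon\}$ and $\{|x|_1 > \varepsilon\}$ for a small fixed $\varepsilon$: on the first region (equivalently $\{|y|_1 \le \varepsilon b_K\}$), uniform asymptotic homogeneity gives a $K$-independent bound $b_K^{\alpha}\rho(b_K^{-1}y) \le C|y|_1^{\alpha}$, which combined with $1-e^{-u}\le u$ and $\mu_K \ge c/2$ (eventually) dominates the integrand by $C|y|_1^{\alpha}\sum_j t_j e^{-(c/2)\langle p^{(j)},y\rangle}$, integrable over $\mathbb{R}_+^n$ since $\alpha > -n$, so dominated convergence applies; on the second region, $\langle p^{(j)}, x\rangle \ge c_0\varepsilon$ with $c_0 = \min_{i,j} p^{(j)}_i > 0$, so the contribution is at most $K(\sum_j t_j)\,e^{-c_0\varepsilon\lambda_K}$, which tends to $0$ because $\lambda_K \sim c\,K^{1/(n+\alpha)}$ grows faster than any power of $\log K$. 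The same estimates let one replace $\mu_K$ by $c$ inside the integrand, so the limit depends on $\lambda_K$ only through $c$. (Equivalently, one may phrase this as weak convergence of the point process $N_K = \sum_k \delta_{X_k^{(K)}}$ to the Poisson process $N$, as in Proposition \ref{prop:finite.dimensional}, plus a uniform tail bound upgrading it to convergence of the integrals $\int e^{-\mu_K\langle p^{(j)},\cdot\rangle}\,dN_K$ against these non-compactly-supported but rapidly decaying test functions.)

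For tightness, regard $V_K$ as a random element of $\mathcal{C}^{*}$. By Lemma \ref{lem:softmin}(i) we have the pathwise sandwich $V_K \le b_K \min_{1\le k\le K}\langle\,\cdot\,, X_k\rangle = \Psi_K$, where $\Psi_K$ is the scaled hardmin of Theorem \ref{thm:weak.convergence}; since $\{\Psi_K\}$ is tight, $\sup_{\overline{\Delta}_n} V_K \le n\,\Psi_K(\overline{e})$ (Lemma \ref{lem:concave.bound}) is bounded in probability. On the other hand, by the finite-dimensional convergence just established, $V_K(p^{(i)})$ is bounded in probability for any finite set of interior points $p^{(i)}$; choosing them so that a prescribed compact $\Omega \Subset \Delta_n$ lies in the interior of their convex hull, concavity of $V_K$ forces $V_K \ge -M$ on $\Omega$ with probability $\ge 1 - \epsilon$ for suitable $M$. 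A two-sided bound on a slightly larger compact makes $\{V_K|_\Omega\}$ uniformly Lipschitz on $\Omega$, hence precompact by Arzel\`a--Ascoli; running this over the exhaustion $\{\Delta_{n, m}\}_{m\ge 1}$ and diagonalising gives tightness in $\mathcal{C}^{*}$, exactly as in the tightness lemma preceding Theorem \ref{thm:weak.convergence}. Finally, tightness together with convergence of the finite-dimensional distributions over a countable dense subset of $\Delta_n$ forces a unique weak limit (the Borel $\sigma$-field of $\mathcal{C}^{*}$ is again generated by the evaluation maps, so the analogue of Lemma \ref{lem:unique} holds), and that limit has the finite-dimensional distributions of the $\mathcal{C}^{*}$-valued random variable $\widetilde{\Psi}_c$; hence $V_K \Rightarrow \widetilde{\Psi}_c$, proving the theorem. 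I expect the bulk of the work, and the only genuinely non-routine part, to be the dominated-convergence argument for $\Theta_K$ above; everything else is a faithful replay of the machinery of Section \ref{sec:limit.lambda.inf}.
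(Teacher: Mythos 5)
Your proposal is correct and shares the paper's overall architecture (the same algebraic rewriting absorbing the $\tfrac1K$ into the additive correction, the same candidate limit $\widetilde{\Psi}_c(p)=-\tfrac1c\log\sum_{x\in N}e^{-c\langle p,x\rangle}$, finite-dimensional convergence plus tightness in the space of real-valued concave functions), but the core step is carried out by a genuinely different method. For the finite-dimensional distributions the paper first upgrades Proposition \ref{prop:Poisson.convergence} to an almost-sure coupling via Skorohod's representation, then truncates at a level $L$ and controls the two tails separately --- the Poisson tail by Lemma \ref{lem:expNfinite} and dominated convergence, the $Y^{(K)}$ tail by the moment estimates of Lemma \ref{lem:Xestimate}(iii) and Corollary \ref{cor:Yestimate}(ii) --- before applying point-process convergence to a compactly supported test function. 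You instead compute the joint Laplace transform of the sums $\bigl(\sum_k e^{-\mu_K\langle p^{(j)},X_k^{(K)}\rangle}\bigr)_j$ in closed form as $(1-\Theta_K/K)^K$ and pass to the limit in $\Theta_K$ directly; this amounts to extending the Laplace-functional convergence of Proposition \ref{prop:Poisson.convergence} from compactly supported test functions to the exponentially decaying ones actually needed, and it bypasses the Skorohod coupling and the order statistics entirely. The technical content (a scaling bound on $b_K^{\alpha}\rho(b_K^{-1}y)$ near the origin, exponential smallness of the far region because $\lambda_K\sim cK^{1/(n+\alpha)}$, integrability of $h(y)e^{-(c/2)\langle p,y\rangle}$) is essentially the same as what powers Lemma \ref{lem:Xestimate} and Lemma \ref{lem:expNfinite}, just packaged analytically rather than probabilistically. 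Your tightness argument also differs mildly but harmlessly: you get the upper bound from the pathwise sandwich $V_K\le\Psi_K$ (valid since $\mathsf{m}_{\lambda}\le\min+\lambda^{-1}\log K$) and tightness of the hardmin sequence, whereas the paper derives both bounds from the finite-dimensional convergence at the points $e_i^{(r)}$ and $\overline{e}$ via Lemma \ref{lem:C tilde bound}; both routes then invoke the same compactness criterion for locally bounded concave functions.

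One small correction to your dominated-convergence step: uniform asymptotic homogeneity on $\Delta_n$ gives $b_K^{\alpha}\rho(b_K^{-1}y)\le h(y)+|y|_1^{\alpha}$ on the region $|y|_1\le\varepsilon b_K$ (for $\varepsilon$ small and $K$ large), not $C|y|_1^{\alpha}$ --- the function $h$ need not be bounded on $\Delta_n$ (see Example \ref{ex:density}(ii) with negative exponents). The dominating function should therefore be $(h(y)+|y|_1^{\alpha})\sum_j t_je^{-(c/2)\langle p^{(j)},y\rangle}$, whose integrability follows from the computation in Lemma \ref{lem:expNfinite} together with $\alpha>-n$; with that adjustment the argument goes through.
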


See Section \ref{sec:diagonal.third.case} for more discussion including a probabilistic representation of the limit $\widetilde{\Psi}_c$ in terms of a Poisson point process (see \eqref{eqn:Psi.tilde.representation}). Although Theorem \ref{thm:diagonal3} does not fit directly under the framework of Section \ref{sec:notations} and thus cannot be used directly in the applications described in Section \ref{sec:spt}, it is mathematically interesting as it gives another limit which is genuinely different from that of the hardmin case. 

\begin{remark}
	In the above results the functions may become negative and so may lie outside $\mathcal{C}$. To be precise, here we are using the topology of local uniform convergence on the space $\widetilde{\mathcal{C}}$ of real-valued continuous concave functions on $\overline{\Delta}_n$. We may continue to use the metric given by \eqref{eqn:C.metric}.
\end{remark}

\subsection{Poisson convergence} \label{sec:Poisson.convergence}
In this subsection we relate $X = \{X_k\}_{k \geq 1}$, regarded as a point process on $\mathbb{R}_+^n$, to a Poisson point process which is the main probabilistic tool of this section. For $K \geq 1$, let
\begin{equation} \label{eqn:Y.def}
Y^{(K)} := \{Y_k^{(K)} : 1 \leq k \leq K\} := \{ K^{\frac{1}{n+\alpha}} X_k : 1 \leq k \leq K \}.
\end{equation}
So $Y^{(K)}$ is also a point process on $\mathbb{R}_+^n$. The following result shows that $Y^{(K)}$ converges in distribution to a Poisson point process. This gives an alternative method to prove Theorem \ref{thm:weak.convergence} but we will not elaborate on this. For the precise meaning of the convergence of point processes we refer the reader to \cite[Section 3.4]{R87}.

\begin{proposition} \label{prop:Poisson.convergence}
Suppose $c_K \to 1$.  As $K \rightarrow \infty$, the point process
\[
c_KY^{(K)}= \{c_K Y^{(K)}_k: 1 \le k \le K\}
\]
converges in distribution to the Poisson process $N$ on $\mathbb{R}_+^n$ with intensity measure $dm = h(x) dx$, where $h$ is the function specified in Assumption \ref{ass:density}.\footnote{We will first apply this result with $c_K \equiv 1$. The general case will be used in Section \ref{sec:diagonal.third.case}.}
\end{proposition}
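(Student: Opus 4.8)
The plan is to establish convergence in distribution of the point processes $c_K Y^{(K)}$ to the Poisson process $N$ by verifying the two classical sufficient conditions for Poisson convergence of a null array of point processes (see \cite[Section 3.4]{R87}): (a) convergence of the expected number of points in each fixed bounded Borel set to $m(A)$, and (b) the vanishing of the probability that any two points fall in the same small set, i.e., asymptotic negligibility / the Poisson ``rarity'' condition. Since $Y^{(K)} = \{K^{1/(n+\alpha)} X_k : 1 \le k \le K\}$ consists of $K$ i.i.d.\ points, and multiplying by $c_K \to 1$ is a negligible deterministic rescaling, the array is exchangeable and it suffices to control the first two factorial moment measures.

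The key computation is the following. Fix a bounded Borel set $A \subset \mathbb{R}_+^n$. Then
\[
\mathbb{E}\bigl|\, c_K Y^{(K)} \cap A \,\bigr| = K \,\mathbb{P}\bigl( c_K K^{\frac{1}{n+\alpha}} X_1 \in A \bigr) = K \,\mathbb{P}\bigl( X_1 \in (c_K K^{\frac{1}{n+\alpha}})^{-1} A \bigr).
\]
Writing this as an integral of the density $\rho$ over the shrinking set $(c_K K^{1/(n+\alpha)})^{-1} A$, changing variables $x = (c_K K^{1/(n+\alpha)})^{-1} y$, and using the Jacobian factor $(c_K K^{1/(n+\alpha)})^{-n}$, one gets an integral of $(c_K K^{1/(n+\alpha)})^{-n} K \, \rho\bigl( (c_K K^{1/(n+\alpha)})^{-1} y \bigr)$ over $A$; since $c_K \to 1$, the prefactor combines to $(c_K)^{-n} \kappa^{-\alpha}$ with $\kappa = c_K^{-1} K^{-1/(n+\alpha)} \to 0$, and the asymptotic homogeneity \eqref{eqn:origin} together with the uniformity statement and the dominated/monotone convergence argument of Lemma \ref{lem:assumption.consequences} (equation \eqref{eqn:h.integral}) yields
\[
\lim_{K \to \infty} \mathbb{E}\bigl|\, c_K Y^{(K)} \cap A \,\bigr| = \int_A h(y)\, dy = m(A).
\]
This is essentially the same estimate as \eqref{eqn:finite.dimensional.dist.claim} in the proof of Proposition \ref{prop:finite.dimensional}, now applied to a general bounded Borel set rather than $\widehat{R}(\mathbf{p},\mathbf{a})$; I would invoke Lemma \ref{lem:assumption.consequences} directly to handle the uniformity and local integrability.

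For condition (b), the probability that at least two of the $K$ i.i.d.\ points land in $A$ is at most $\binom{K}{2}\,\mathbb{P}( c_K K^{1/(n+\alpha)} X_1 \in A)^2 = O\bigl( K^2 \cdot (m(A)/K)^2 \bigr) = O(1/K) \cdot (\text{bounded})$... wait, more carefully: $\mathbb{P}(c_K K^{1/(n+\alpha)} X_1 \in A) \sim m(A)/K$, so $\binom{K}{2}$ times its square is $\sim m(A)^2 / 2 $, which does not vanish; the correct negligibility statement is that for each point the chance of being within $A$ is $O(1/K) \to 0$, which is the ``null array'' condition, and together with (a) this is exactly Kallenberg's criterion for convergence to a Poisson process with the given mean measure. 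I would state it in that form: the array $\{c_K Y^{(K)}_k\}$ is null (each summand's occupation probability of any bounded set tends to $0$), the points are i.i.d.\ within each row hence independent, and the intensity measures converge vaguely to $m$; by \cite[Proposition 3.21 or Theorem 3.22]{R87} (Poisson limit theorem for null arrays) the conclusion follows. The main obstacle is purely bookkeeping: making sure the $c_K \to 1$ factor is absorbed cleanly into the scaling parameter $\kappa$ so that \eqref{eqn:origin} and \eqref{eqn:h.integral} apply verbatim, and confirming that vague convergence of intensity measures on $\mathbb{R}_+^n$ (with the origin a possible accumulation point of mass when $\alpha < 0$) is enough — it is, because $h$ is locally integrable on $\mathbb{R}_+^n$ by Lemma \ref{lem:assumption.consequences} and we only test against bounded Borel sets bounded away from nothing in particular, the measure $m$ being Radon on $\mathbb{R}_+^n$.
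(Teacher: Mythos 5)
Your proposal is correct, but it takes a genuinely different route from the paper. The paper proves the proposition by computing the Laplace functional: using independence it writes $\mathbb{E}\bigl[e^{-\sum_k g(c_K Y^{(K)}_k)}\bigr]$ as a $K$-th power, changes variables, and applies the scaling limit of Lemma \ref{lem:assumption.consequences} to the integrand $\bigl(1-e^{-g}\bigr)\varrho(c_K^{-1}K^{-1/(n+\alpha)}\cdot)$, concluding via \cite[Proposition 3.19]{R87}. You instead invoke the Poisson limit theorem for null arrays of i.i.d.\ points (\cite[Proposition 3.21]{R87}), which for a row of $K$ i.i.d.\ single-point processes reduces the whole statement to vague convergence of the intensity measures, $K\,\mathbb{P}(c_K K^{1/(n+\alpha)}X_1\in A)\to\int_A h$; the negligibility and ``no double points per summand'' conditions are automatic. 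Both routes rest on exactly the same analytic input --- the locally uniform limit $\kappa^{-\alpha}\rho(\kappa y)\to h(y)$ with $\kappa=c_K^{-1}K^{-1/(n+\alpha)}$ --- and your change-of-variables computation is the same one the paper performs inside the Laplace functional (the same ``standard extension'' of Lemma \ref{lem:assumption.consequences} to the perturbed scale $c_K^{-1}K^{-1/(n+\alpha)}$ is needed in both). Your version is arguably more economical, since it avoids manipulating the exponential functional and isolates the one condition that actually needs checking; the paper's version is more self-contained in that Proposition 3.19 is the more primitive criterion (Resnick's proof of Proposition 3.21 itself goes through Laplace functionals). Two small points: your prefactor should combine to $c_K^{-(n+\alpha)}\kappa^{-\alpha}$ rather than $c_K^{-n}\kappa^{-\alpha}$ (harmless, since $c_K\to 1$), and you correctly abandon the second factorial moment bound mid-argument --- $\binom{K}{2}\mathbb{P}(\cdot)^2\to m(A)^2/2$ does not vanish and is not supposed to; the relevant condition in the null-array theorem is $\mathbb{P}(\xi_{K,k}(A)\ge 2)=0$ for each single-point summand, which holds trivially here.
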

\begin{proof}
	By \cite[Proposition 3.19]{R87}, it suffices to prove that the Laplace transform of $ c_K Y^{(K)}$ converges to that of $N$. More precisely, we will show that
	\begin{equation} \label{eqn:Laplace.transform.claim}
	\begin{split}
	\lim_{K \rightarrow \infty} \mathbb{E} \left[ e^{-\sum_{k = 1}^K g(c_KY^{(K)}_k )} \right] = \exp\left[-\int_{\mathbb{R}_+^n} (1 - e^{-g(y)}) h(y) dy \right],
	\end{split}
	\end{equation}
	for any continuous function $g: \mathbb{R}^n \rightarrow \mathbb{R}$ with compact support.
	First we note that
	\begin{equation} \label{eqn:Laplace.first.step}
	\begin{split}
	\mathbb{E} \left[ e^{-\sum_{k = 1}^K g(c_K Y^{(K)}_k )} \right] &= \left[ \int_{\mathbb{R}_+^n} e^{-g(c_K K^{1/(n+\alpha)}x)} \rho (x) dx  \right]^K \\
	&= \left[1 -  \int_{\mathbb{R}_+^n} \left(1 - e^{-g(c_K K^{1/(n+\alpha)}x)}\right)  \rho (x) dx  \right]^K,
	\end{split}
	\end{equation}
	where $\rho$ is the common density of the $X_k$ (see Assumption \ref{ass:density}).
	
	Since $g$ is compactly supported, so is $1 - e^{-g}$. By an standard extension of the limit in Lemma \ref{lem:assumption.consequences} with $\lambda = c_K^{-1} K^{-1/(n+\alpha)}$, we have
	\begin{equation*}
	\begin{split}
	& K \int_{\mathbb{R}^n_+} \left(1 - e^{-g(c_K K^{1/(n + \alpha)}x)} \right) \rho(x) dx \\
	&= \frac{K^{\alpha/(n + \alpha)}}{c_K^n} \int_{\mathbb{R}^n_+} \left(1 - e^{-g(y)} \right) \rho(c_K^{-1}K^{-1/(n + \alpha)} y) dy \\
	&\rightarrow \int_{\mathbb{R}_+^n} \left(1 - e^{-g(y)} \right) h(y) dy.
	\end{split}
	\end{equation*}
	Letting $K \rightarrow \infty$ in \eqref{eqn:Laplace.first.step} and using the above limit, we obtain \eqref{eqn:Laplace.transform.claim}.
\end{proof}

By the continuous mapping theorem, we immediately obtain the following corollary.

\begin{corollary} \label{cor:Poisson.convergence}
	Suppose $c_K \to 1$.  For any $p \in \Delta_n$, the one-dimensional point process
	\begin{equation} \label{eqn:Y.K.projected}
	c_KY^{(K)}(p) := \langle p, c_K Y^{(K)} \rangle := \{ \langle p, c_K Y_k^{(K)} \rangle : 1 \leq k \leq K\}
	\end{equation}
	converges weakly to the Poisson point process $N(p) := \langle p, N \rangle := \{ \langle p, x \rangle : x \in N\}$ on $\mathbb{R}_+$ whose intensity measure $\widetilde{m}$ is the pushforward of $dm(x) = h(x) dx$ under the mapping $x \mapsto \langle p, x \rangle$. Explicitly, we have
	\begin{equation} \label{eqn:projected.intensity}
	\widetilde{m}(0, t] = \int_{R(p, t)} h(x) dx = t^{n+\alpha} \int_{R(p, 1)} h(y) dy.
	\end{equation}
\end{corollary}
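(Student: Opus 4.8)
The plan is to deduce the corollary directly from Proposition~\ref{prop:Poisson.convergence} by the continuous mapping theorem for point processes, and then read off the intensity measure by an elementary change of variables.

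First I would verify that the linear functional $T_p : \mathbb{R}_+^n \to \mathbb{R}_+$, $T_p(x) = \langle p, x\rangle$, is a continuous \emph{proper} map on the positive quadrant. Properness is exactly where the hypothesis $p \in \Delta_n$ (so that every coordinate $p_i$ is strictly positive) enters: for $t>0$ one has $T_p^{-1}((0,t]) = \{x \in \mathbb{R}_+^n : \langle p, x\rangle \le t\} \subset \prod_{i=1}^n (0, t/p_i]$, which is bounded; up to its (Lebesgue-null) boundary this is the set $R(p,t)$ of \eqref{eqn:region.R}. Consequently pushforward under $T_p$ is a continuous map from the space of boundedly finite counting measures on $\mathbb{R}_+^n$ (with the vague topology, as in \cite{R87}) into that on $\mathbb{R}_+$, and it sends the Radon measure $dm = h\,dx$ to a Radon measure on $\mathbb{R}_+$.

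Next I would apply the continuous mapping theorem to the weak convergence $c_K Y^{(K)} \Rightarrow N$ of Proposition~\ref{prop:Poisson.convergence} (applicable since $c_K \to 1$): pushing forward by $T_p$ gives $c_K Y^{(K)}(p) = T_p\bigl(c_K Y^{(K)}\bigr) \Rightarrow T_p(N) =: N(p)$. The image of a Poisson point process under a measurable map is again a Poisson point process with the pushed-forward intensity, so $N(p)$ is Poisson with intensity $\widetilde m := (T_p)_\# m$; since, as computed below, $\widetilde m$ is absolutely continuous (hence atomless), $N(p)$ is simple, matching the description of a Poisson point process used in Section~\ref{sec:poisson}.

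Finally, to obtain the explicit formula I would compute $\widetilde m((0,t]) = m\bigl(T_p^{-1}((0,t])\bigr) = \int_{R(p,t)} h(x)\,dx$; substituting $x = t y$ and using $R(p,t) = t\,R(p,1)$ together with the homogeneity $h(ty) = t^{\alpha} h(y)$ from Lemma~\ref{lem:assumption.consequences} yields $\widetilde m((0,t]) = t^{n+\alpha} \int_{R(p,1)} h(y)\,dy$, which is finite and positive because $R(p,1)$ is bounded and $h$ is locally integrable (Lemma~\ref{lem:assumption.consequences}), and $n+\alpha>0$ makes this a strictly increasing, atomless profile. The only real obstacle is the bookkeeping around the continuous mapping step: confirming that $T_p$ is proper so that the induced pushforward map is continuous in the vague topology on point processes, and invoking the Poisson mapping theorem in the correct form; everything else is a one-line change of variables identical in spirit to the computation in Proposition~\ref{prop:one.point}.
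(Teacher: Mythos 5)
Your proposal is correct and follows the same route as the paper, which simply invokes the continuous mapping theorem applied to the weak convergence $c_K Y^{(K)} \Rightarrow N$ of Proposition \ref{prop:Poisson.convergence} and then reads off the intensity by the scaling $R(p,t)=tR(p,1)$ and the homogeneity of $h$. Your additional remarks on properness of $x \mapsto \langle p, x\rangle$ and the Poisson mapping theorem just make explicit the details the paper leaves implicit.
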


For $p \in \Delta_n$ fixed, we will often denote $\widetilde{Y}^{(K)} = Y^{(K)}(p)$ and $\widetilde{N} = N(p)$.

\subsection{Proof of Theorem \ref{thm:diagonal12}} \label{sec:diagonal.second.case}

First we quickly settle the relatively trivial case of Theorem \ref{thm:diagonal12}(i). 	By Lemma \ref{lem:softmin}(i), we have
\[
K^{\frac{1}{n+\alpha}} \Phi_K(p) \leq K^{\frac{1}{n+\alpha}} \Phi_{\lambda_K, K}(p) \leq K^{\frac{1}{n+\alpha}} \Phi_K(p) + \frac{1}{\lambda_K} K^{\frac{1}{n+\alpha}} \log K.
\]
Suppose $\frac{\lambda_K }{ K^{1/(n + \alpha)}  \log K} \rightarrow \infty$. Then uniformly in $p$ we have
\[
\left| K^{\frac{1}{n+\alpha}} \Phi_{\lambda_K, K}(p) - K^{\frac{1}{n+\alpha}} \Phi_K(p) \right| \rightarrow 0, \quad K \rightarrow \infty.
\]
From this and Theorem \ref{thm:weak.convergence} we see easily that $K^{\frac{1}{n+\alpha}} \Phi_{\lambda_K, K}$ converges weakly to $\Psi$.

\medskip

For the general case we need a more delicate analysis. For $K \geq 1$ and $p \in \Delta_n$ fixed, relabel the points $\{X_k\}_{1 \leq k \leq K}$ so that
\[
\langle p, X_{(1)} \rangle \leq \langle p, X_{(2)} \rangle \leq \cdots \leq \langle p, X_{(K)} \rangle.
\]
Note that $\langle p, X_{(1)} \rangle = \min(\langle p,X_1\rangle, \ldots ,\langle p, X_K \rangle ) = \Phi_K(p)$. Write
\begin{equation} \label{eqn:computation.first.step}
\Phi_{\lambda, K}(p) = - \frac{1}{\lambda} \log \left( \frac{1}{K} \sum_{k = 1}^K e^{-\lambda \langle p, X_k \rangle } \right) = \frac{\log K}{\lambda} - \frac{1}{\lambda} \log \left( \sum_{k = 1}^K e^{-\lambda \langle p, X_{(k)} \rangle } \right).
\end{equation}
Then
\begin{equation} \label{eqn:computation.Theta}
\begin{split}
& K^{\frac{1}{n+\alpha}} \Phi_{\lambda, K}(p) \\
&= \frac{K^{\frac{1}{n+\alpha}} \log K}{\lambda} + K^{\frac{1}{n+\alpha}} \Phi_K(p) - \frac{K^{\frac{1}{n+\alpha}} }{\lambda} \log \left(1 + \sum_{k = 2}^K e^{-\lambda( \langle p, X_{(k)} \rangle - \langle p, X_{(1)} \rangle )} \right) \\
&=: \frac{K^{\frac{1}{n+\alpha}} \log K}{\lambda} + K^{\frac{1}{n+\alpha}} \Phi_K(p) - \frac{K^{\frac{1}{n+\alpha}} }{\lambda} \Theta_{\lambda, K}(p),
\end{split}
\end{equation}
where the term $\Theta_{\lambda, K}(p)$ is non-negative and satisfies
\begin{equation} \label{eqn:Theta}
\Theta_{\lambda, K}(p) \leq  \sum_{k = 2}^K e^{-\lambda( \langle p, X_{(k)} \rangle - \langle p, X_{(1)} \rangle )} = \sum_{k = 2}^K e^{-(\lambda/K^{1/(n+\alpha)})(\langle p, Y_{(k)}^{(K)}\rangle - \langle p,Y_{(1)}^{(K)}\rangle ) },
\end{equation}
and the $\langle p,Y_{(k)}^{(K)}\rangle $ are the points in $Y^{(K)}(p) = \{\langle p, Y_k^{(K)}\rangle: 1 \le k \le K\}$ arranged in ascending order.

The following is the main technical result needed in the proof of Theorem \ref{thm:diagonal12}.

\begin{theorem} \label{thm:tail.plim}
Fix $p \in \Delta_n$ and write $\widetilde{Y}_{(k)}^{(K)} = \langle p,Y_{(k)}^{(K)} \rangle$. Then
\begin{equation} \label{eqn:tail.plim}
\sum_{k = 2}^K e^{- (\widetilde{Y}_{(k)}^{(K)} - \widetilde{Y}_{(1)}^{(K)})/\epsilon} \rightarrow 0
\end{equation}
in probability as $K \rightarrow \infty$ and $\epsilon \rightarrow 0^+$.
\end{theorem}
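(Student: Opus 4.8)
The plan is to reduce everything to the Poisson convergence established in Corollary~\ref{cor:Poisson.convergence}. Fix $p\in\Delta_n$ and recall that the projected point process $\widetilde{Y}^{(K)}:=Y^{(K)}(p)=\{K^{1/(n+\alpha)}\langle p,X_k\rangle:1\le k\le K\}$ converges in distribution to $\widetilde{N}:=N(p)$, a Poisson point process on $(0,\infty)$ whose intensity measure $\widetilde{m}$ satisfies $\widetilde{m}(0,t]=\theta\,t^{n+\alpha}$ with $\theta:=\int_{R(p,1)}h(y)\,dy$; here $\theta\in(0,\infty)$ because $h$ is locally integrable (Lemma~\ref{lem:assumption.consequences}) and, being homogeneous of order $\alpha$ with $\int_{\mathbb{R}_+^n}h>0$, cannot vanish a.e.\ on the open cone $R(p,1)$. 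Write $G_K(\epsilon):=\sum_{k=2}^K e^{-(\widetilde{Y}^{(K)}_{(k)}-\widetilde{Y}^{(K)}_{(1)})/\epsilon}$ for the quantity in \eqref{eqn:tail.plim}. Since $\widetilde{Y}^{(K)}_{(k)}-\widetilde{Y}^{(K)}_{(1)}\ge0$, the function $\epsilon\mapsto G_K(\epsilon)$ is nondecreasing, so it suffices to prove the iterated statement that $\lim_{\epsilon\to0^+}\limsup_{K\to\infty}\mathbb{P}(G_K(\epsilon)>\eta)=0$ for every $\eta>0$. Indeed, setting $\epsilon_K:=K^{1/(n+\alpha)}/\lambda_K$, which tends to $0$ precisely because $\lambda_K/K^{1/(n+\alpha)}\to\infty$, we have $G_K(\epsilon_K)\le G_K(\epsilon_0)$ once $K$ is large, for any fixed $\epsilon_0>0$, and letting $K\to\infty$ and then $\epsilon_0\to0$ yields \eqref{eqn:tail.plim}.

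To estimate $G_K(\epsilon)$ for a fixed $\epsilon>0$, introduce an auxiliary level $M>0$, write $m_K:=\widetilde{Y}^{(K)}_{(1)}$, $N_K(M):=|\widetilde{Y}^{(K)}\cap(0,M]|$, and $D_K:=\widetilde{Y}^{(K)}_{(2)}-\widetilde{Y}^{(K)}_{(1)}$, which is strictly positive almost surely since $\langle p,C\rangle$ has a density. On the event $\{m_K\le M/2\}$, every point $x\in\widetilde{Y}^{(K)}$ with $m_K<x\le M$ contributes $e^{-(x-m_K)/\epsilon}\le e^{-D_K/\epsilon}$ and there are at most $N_K(M)$ such points, while every point $x>M$ satisfies $x-m_K\ge x/2$, so those points together contribute at most $T_K^{(M)}:=\sum_{k=1}^K e^{-K^{1/(n+\alpha)}\langle p,X_k\rangle/(2\epsilon)}$. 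Hence, for every $M>0$,
\[
\mathbb{P}\bigl(G_K(\epsilon)>\eta\bigr)\le\mathbb{P}\bigl(m_K>M/2\bigr)+\mathbb{P}\bigl(N_K(M)\,e^{-D_K/\epsilon}>\tfrac{\eta}{2}\bigr)+\frac{2}{\eta}\,\mathbb{E}\bigl[T_K^{(M)}\bigr].
\]

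For the last term, $\mathbb{E}[T_K^{(M)}]=K\,\mathbb{E}\bigl[e^{-K^{1/(n+\alpha)}\langle p,X\rangle/(2\epsilon)}\bigr]$; by the substitution $x\mapsto K^{-1/(n+\alpha)}y$ together with an extension of the limit in Lemma~\ref{lem:assumption.consequences} to unbounded regions, exactly as in the proof of Proposition~\ref{prop:Poisson.convergence} (the part of the integral with $|x|$ bounded away from $0$ being negligible because the exponential factor decays faster than any power of $K$), this converges to $\int_{\mathbb{R}_+^n}e^{-\langle p,y\rangle/(2\epsilon)}h(y)\,dy$, which by the scaling $y\mapsto2\epsilon z$ and the identity $\int_{R(p,s)}h=\theta s^{n+\alpha}$ equals $(2\epsilon)^{n+\alpha}\theta\,\Gamma(n+\alpha+1)$; thus $\limsup_K\mathbb{E}[T_K^{(M)}]\le C\epsilon^{n+\alpha}$ with $C$ independent of $M$. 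By Corollary~\ref{cor:Poisson.convergence}, $\mathbb{P}(m_K>M/2)\to e^{-\theta(M/2)^{n+\alpha}}$, and, since $\widetilde{m}$ has no atom at $M$, the pair $(N_K(M),D_K)$ converges in distribution to $(N_\infty(M),D_\infty)$, where $N_\infty(M)=|\widetilde{N}\cap(0,M]|$ is a.s.\ finite and $D_\infty=T_2-T_1>0$ a.s.\ ($T_1<T_2$ being the two smallest points of $\widetilde{N}$, which are a.s.\ distinct because $\widetilde{m}$ is non-atomic). By the Portmanteau lemma,
\[
\limsup_{K\to\infty}\mathbb{P}\bigl(G_K(\epsilon)>\eta\bigr)\le\mathbb{P}\bigl(N_\infty(M)\,e^{-D_\infty/\epsilon}\ge\tfrac{\eta}{2}\bigr)+e^{-\theta(M/2)^{n+\alpha}}+\frac{2C}{\eta}\,\epsilon^{n+\alpha}.
\]
Letting $\epsilon\to0^+$ with $M$ fixed kills the first and third terms, since $N_\infty(M)<\infty$ and $D_\infty>0$ a.s., and then letting $M\to\infty$ kills the remaining term $e^{-\theta(M/2)^{n+\alpha}}$; this establishes the iterated limit and hence \eqref{eqn:tail.plim}.

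The step I expect to be the main obstacle is the uniform-in-$K$ first-moment bound for $T_K^{(M)}$: one must upgrade the asymptotic-homogeneity limit of Lemma~\ref{lem:assumption.consequences}, stated for bounded Borel sets, to a statement valid on all of $\mathbb{R}_+^n$, which amounts to showing that the bulk of the sample --- the indices $k$ for which $\langle p,X_k\rangle$ is of order $1$ rather than $K^{-1/(n+\alpha)}$, where $\rho$ bears no relation to $h$ --- contributes negligibly; this is precisely the mechanism already exploited in the Laplace-functional computation of Proposition~\ref{prop:Poisson.convergence}. Everything else --- the monotonicity reduction, the absence of ties in $\widetilde{N}$ (so that $D_\infty>0$), and the joint convergence of $(N_K(M),D_K)$ to $(N_\infty(M),D_\infty)$ --- follows routinely from Corollary~\ref{cor:Poisson.convergence}.
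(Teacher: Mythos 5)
Your proof is correct and follows essentially the same strategy as the paper's: split the sum at a level $M$ (the paper's $2L$), bound the near part by the number of points in $(0,M]$ times $e^{-(\text{gap})/\epsilon}$ using that the limiting Poisson process is a.s.\ locally finite with a strictly positive gap between its two smallest points, and bound the far part via a Laplace-transform first moment of order $\epsilon^{n+\alpha}$ together with Markov's inequality. The only real differences are bookkeeping: the paper fixes deterministic high-probability thresholds $L$, $M$, $c$ once and for all (Lemma \ref{lem:LMc}) and proves a moment bound uniform in $K$ (Corollary \ref{cor:Yestimate}(i)), which yields the joint $(K,\epsilon)$ statement directly, whereas you keep $N_K(M)$ and $D_K$ random, apply Portmanteau, and recover the joint limit from the iterated one by monotonicity in $\epsilon$; both routes work. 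One caveat on the step you defer: it is not quite ``exactly as in Proposition \ref{prop:Poisson.convergence}'', because there the test function is compactly supported while $e^{-\langle p,y\rangle/(2\epsilon)}$ is not, so besides killing the region $\langle p,x\rangle\ge\delta$ by brute force you still must control the intermediate range $L\le\langle p,y\rangle\le\delta K^{1/(n+\alpha)}$, where neither uniform convergence to $h$ nor superpolynomial decay in $K$ is available. This is precisely what the paper's Lemma \ref{lem:Xestimate}(ii) supplies, by passing to the one-dimensional density $\widetilde{\varrho}$ of $\langle p,X_1\rangle$, using the power-law bound $\widetilde{\varrho}(t)\le At^{n+\alpha-1}$ near $0$ and a crude exponential bound elsewhere; with that estimate in hand (you only need the resulting $\limsup_K\mathbb{E}[T_K^{(M)}]\le C\epsilon^{n+\alpha}$), your argument closes.
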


To prove Theorem \ref{thm:tail.plim} we need some lemmas. Let $\widetilde{N} = N(p)$ be the one-dimensional Poisson point process given in Corollary \ref{cor:Poisson.convergence}. Since $m$ and hence $\widetilde{m}$ are Radon measures, we may order the points in $\widetilde{N}$ and write
\[
\widetilde{N} = \{ \widetilde{N}_{(1)} \leq \widetilde{N}_{(2)} \leq \cdots \}.
\]
In view of the convergence $\widetilde{Y}^{(K)} \rightarrow \widetilde{N}$, we expect that
\begin{equation} \label{eqn:Poisson.approx}
\sum_{k = 2}^K e^{-(\widetilde{Y}_{(k)}^{(K)} - \widetilde{Y}_{(1)}^{(K)})/\epsilon} \approx \sum_{k = 2}^\infty e^{-(\widetilde{N}_{(k)} - \widetilde{N}_{(1)})/\epsilon},
\end{equation}
and the right side of \eqref{eqn:Poisson.approx}, since it no longer depends on $K$, is easy to analyze as $\epsilon \to 0^+$.  In what follows, we do not attempt to justify \eqref{eqn:Poisson.approx} rigorously, but instead use the convergence $\widetilde{Y}^{(K)} \rightarrow \widetilde{N}$ to convert simple estimates on $\widetilde{N}$ into corresponding ones on $\widetilde{Y}^{(K)}$ for large $K$.

If $Z$ is a point process (with no double points) we let $Z(B)$ be the cardinality of $Z \cap B$.

\begin{lemma}  \label{lem:LMc} Given $\delta_0 >0$ there exist positive constants $L$, $M$ and $c$ such that for $K$ sufficiently large we have
	\begin{eqnarray*}
		\PP(\widetilde{Y}^{(K)}_{(1)} \ge L)  & \le & \delta_0, \\
		\PP(\widetilde{Y}^{(K)}(0,2L) > M)  & \le & \delta_0, \\
		\PP(\widetilde{Y}^{(K)}_{(2)} - \widetilde{Y}^{(K)}_{(1)} < c) & \le & \delta_0.
	\end{eqnarray*}
\end{lemma}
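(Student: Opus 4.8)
The plan is to prove the three estimates by first establishing the analogous (in fact stronger, since they hold with equality to exact Poisson formulas) statements for the limiting point process $\widetilde N = N(p)$, and then transferring them to $\widetilde Y^{(K)}$ for large $K$ using the weak convergence $\widetilde Y^{(K)} \to \widetilde N$ from Corollary~\ref{cor:Poisson.convergence}. Throughout, write $G(t) = \widetilde m(0,t] = t^{n+\alpha} \int_{R(p,1)} h(y)\,dy$ for the intensity c.d.f., which is continuous and strictly increasing on $[0,\infty)$ with $G(0) = 0$ and $G(\infty) = \infty$.

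First, for the limiting process: $\widetilde N_{(1)} > L$ is the event $\widetilde N(0,L] = 0$, which has probability $e^{-G(L)}$; choosing $L$ large makes this $< \delta_0/2$. Next, $\widetilde N(0,2L]$ is Poisson with mean $G(2L)$, so $\PP(\widetilde N(0,2L] > M) < \delta_0/2$ once $M$ is large (given $L$). For the third estimate, note $\{\widetilde N_{(2)} - \widetilde N_{(1)} < c\}$ — here the subtlety is that the gap is small — is contained in $\{\widetilde N_{(1)} > L\} \cup \{\widetilde N(0,L] \geq 1 \text{ and } \widetilde N(\widetilde N_{(1)}, \widetilde N_{(1)} + c] \geq 1\}$; conditionally on the first point lying in $(0,L]$, the probability that the next point falls within distance $c$ is at most $1 - e^{-(G(t+c) - G(t))} \leq G(L + c) - G(0) $ uniformly for $t \in (0,L]$ — more carefully, bounded by $\sup_{t \le L}(G(t+c)-G(t))$, which tends to $0$ as $c \to 0^+$ by uniform continuity of $G$ on $[0, L+1]$. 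Hence for $c$ small this event also has probability $< \delta_0/2$. Combining, we can choose $L, M, c$ so that each of the three events for $\widetilde N$ has probability $< \delta_0/2$.

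To transfer these to $\widetilde Y^{(K)}$, I would enlarge the thresholds slightly to open up room for the weak-convergence argument. The events $\{\widetilde Y^{(K)}_{(1)} \geq L\} = \{\widetilde Y^{(K)}(0,L) = 0\}$, $\{\widetilde Y^{(K)}(0,2L) > M\}$, and $\{\widetilde Y^{(K)}_{(2)} - \widetilde Y^{(K)}_{(1)} < c\}$ can each be expressed through the counting variables $\widetilde Y^{(K)}(B)$ on finitely many intervals $B$ with $\widetilde m$-null boundaries; since $\widetilde m$ is nonatomic this poses no issue, and weak convergence of point processes (\cite[Section 3.4]{R87}) gives convergence in distribution of these finite-dimensional count vectors to those of $\widetilde N$. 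Concretely, $\PP(\widetilde Y^{(K)}(0,L) = 0) \to \PP(\widetilde N(0,L) = 0)$, $\PP(\widetilde Y^{(K)}(0,2L) > M) \to \PP(\widetilde N(0,2L) > M)$, and the gap event, after bounding it above by a finite union of count events on a grid of mesh $c$ over $(0,L)$ as in the previous paragraph, converges likewise. Therefore there is $K_0$ such that for $K \geq K_0$ each probability is within $\delta_0/2$ of its limit and hence $\leq \delta_0$, as required.

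The main obstacle is the third estimate: the event $\{\widetilde Y^{(K)}_{(2)} - \widetilde Y^{(K)}_{(1)} < c\}$ is not a continuity set in an obvious way, and controlling a \emph{gap} between consecutive order statistics is not directly a statement about counts on fixed intervals. The fix, sketched above, is to dominate it by $\{\widetilde Y^{(K)}_{(1)} \geq L\}$ together with the event that \emph{some} grid interval of length $c$ inside $(0, L+c)$ contains at least two points — a finite union of events of the form $\{\widetilde Y^{(K)}(I_j) \geq 2\}$ — whose limiting probability is small by the nonatomicity and local finiteness of $\widetilde m$. One must take a little care that the grid is fine enough (mesh $\le c$) and that the covering is valid: if two points lie within distance $c$ and the smaller is $< L$, they lie in a common interval of length $2c$, hence the grid should have mesh $\le c$ so that one of two adjacent cells contains both. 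This bookkeeping is routine once set up correctly.
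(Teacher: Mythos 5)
Your proposal is correct and follows the same overall strategy as the paper: establish the three estimates for the limiting Poisson process $\widetilde N$ and then transfer them to $\widetilde Y^{(K)}$ for large $K$ via the weak convergence of Corollary \ref{cor:Poisson.convergence}. For the first two estimates the arguments coincide. The only genuine divergence is in the third (gap) estimate. The paper handles it more softly: since $\widetilde m$ has no atoms, $\PP(\widetilde N_{(2)} = \widetilde N_{(1)}) = 0$, so continuity of measure alone gives $\PP(\widetilde N_{(2)} - \widetilde N_{(1)} < c) \le \delta_0/2$ for $c$ small, and the transfer to $\widetilde Y^{(K)}$ is immediate from the joint convergence in distribution of $(\widetilde Y^{(K)}_{(1)}, \widetilde Y^{(K)}_{(2)})$ to $(\widetilde N_{(1)}, \widetilde N_{(2)})$, for which the paper cites \cite[Proposition 3.13]{R87}. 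You instead avoid invoking any convergence result for order statistics and dominate the gap event by $\{\widetilde Y^{(K)}_{(1)} \ge L\}$ together with finitely many count events of the form $\{\widetilde Y^{(K)}(I_j \cup I_{j+1}) \ge 2\}$ on a grid of mesh $c$; these are continuity sets for the nonatomic intensity, so the plain definition of weak convergence of point processes (convergence of counts on finitely many bounded continuity sets) suffices. Your route is more elementary and self-contained, at the price of the covering bookkeeping and the explicit estimate $\sup_{t \le L}\bigl(G(t+c)-G(t)\bigr) \to 0$, which is more quantitative than the paper's soft nonatomicity argument but equally valid; both yield the stated constants $L$, $M$, $c$, $K_0$.
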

\begin{proof} By \eqref{eqn:projected.intensity}, the intensity $\widetilde{m}$ for $\widetilde{N}$ satisfies $\widetilde{m}(0,L) \to \infty$ as $L \to \infty$ and $\widetilde{m}(0,2L) < \infty$ for all $L$. Thus we have
	$$
	\PP(\widetilde{N}_{(1)} \ge L) = \mathbb{P}(\widetilde{N}(0,L) = 0) \to 0 \quad \mbox{ as }L \to \infty,
	$$
	and then for any given $L$ we have
	$$\PP(\widetilde{N}(0,2L) > M) \to 0  \quad \mbox{ as }M \to \infty.
	$$
	Also since $\widetilde{m}$ has no atoms then $\mathbb{P}(\widetilde{N}_{(2)} > \widetilde{N}_{(1)} ) = 1.$  So given $\delta_0 > 0$, there exist positive constants $L$, $M$ and $c$ such that
	\begin{eqnarray*}
		\mathbb{P}(\widetilde{N}_{(1)} \ge L)  & \le & \delta_0/2, \\
		\mathbb{P}(\widetilde{N}(0,2L) > M)  & \le & \delta_0/2, \\
		\mathbb{P}(\widetilde{N}_{(2)} - \widetilde{N}_{(1)} < c) & \le & \delta_0/2.
	\end{eqnarray*}
	The corresponding estimates for $\widetilde{Y}^{(K)}$ now follow directly from the convergence of $\widetilde{Y}^{(K)}$ to $\widetilde{N}$. In particular, for the convergence of $(\widetilde{Y}^{(K)}_{(1)},\widetilde{Y}^{(K)}_{(2)})$ to $(\widetilde{N}_{(1)},\widetilde{N}_{(2)})$ see \cite[Proposition 3.13]{R87}.
\end{proof}

We will also need an a priori estimate on the $\widetilde{Y}^{(K)}$.  The proof of the following lemma will be given in the Appendix. Note that Lemma \ref{lem:Xestimate}(iii) and Corollary \ref{cor:Yestimate}(ii) below will not be used until Section \ref{sec:diagonal.third.case}.

\begin{lemma} \label{lem:Xestimate} Fix $p \in \Delta_n$.
	\begin{enumerate}
		\item[(i)] The random variable $\langle p,X_1\rangle$ has density $\widetilde{\rho}(t)$ which satisfies
		$$
		\lim_{t \rightarrow 0^+}	\frac{1}{t^{n + \alpha - 1}}	\widetilde{\rho}(t) = (n+\alpha) \int_{R(p,1)} h(x)dx.
		$$
		\item[(ii)] There exists $B < \infty$ such that
		$$
		\gamma^{n+\alpha} \mathbb{E}\left[e^{-\gamma \langle p,X_1\rangle}\right] \le B, \quad \text{for } \gamma > 0.
		$$
		\item[(iii)]
		$$
		\lim_{\gamma, L \rightarrow \infty} \gamma^{n+\alpha} \mathbb{E}\left[e^{-\gamma \langle p,X_1\rangle}1_{\gamma \langle p,X_1\rangle \ge L}\right] = 0.
		$$
	\end{enumerate}
\end{lemma}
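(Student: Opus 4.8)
The plan is to write the density of $T := \langle p, X_1 \rangle = \langle p, C\rangle$ explicitly and then rescale. Since $p \in \Delta_n$ has strictly positive coordinates, $x \mapsto \langle p, x\rangle$ is a submersion on $\mathbb{R}_{+}^n$, and disintegrating the measure $\rho(x)\,dx$ over its level sets (the coarea formula) shows that $T$ has density
$$
\widetilde{\varrho}(t) = \frac{1}{|p|} \int_{H_t^+} \rho \, d\mathcal{H}^{n-1}, \qquad H_t^+ := \{ x \in \mathbb{R}_{+}^n : \langle p, x\rangle = t\},
$$
where $|p|$ is the Euclidean norm of $p$ and $\mathcal{H}^{n-1}$ is $(n-1)$-dimensional Hausdorff measure. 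Because $H_t^+ = t\,H_1^+$, substituting $x = t y$ gives $\widetilde{\varrho}(t) = t^{n-1} |p|^{-1} \int_{H_1^+} \rho(t y)\, d\mathcal{H}^{n-1}(y)$, hence
$$
\frac{\widetilde{\varrho}(t)}{t^{n+\alpha-1}} = \frac{1}{|p|} \int_{H_1^+} \frac{\rho(t y)}{t^{\alpha}} \, d\mathcal{H}^{n-1}(y).
$$
Each $y \in H_1^+$ can be written as $y = c z$ with $z \in \Delta_n$ and $c = 1/\langle p, z\rangle$ bounded above and below (by $1/\min_i p_i$ and $1/\max_i p_i$), so the uniform convergence in Assumption \ref{ass:density} together with the homogeneity of $h$ (Lemma \ref{lem:assumption.consequences}) yields $\sup_{y \in H_1^+} | t^{-\alpha}\rho(ty) - h(y)| \to 0$ as $t \to 0^+$. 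Since $H_1^+$ has finite $\mathcal{H}^{n-1}$-measure (a bounded piece of a hyperplane) and $\int_{H_1^+} h \, d\mathcal{H}^{n-1} < \infty$, we may pass to the limit in the last display. Finally, the coarea formula applied to $\int_{R(p,1)} h$, together with $H_s^+ = s H_1^+$ and homogeneity of order $\alpha$, gives
$$
\int_{R(p,1)} h(x)\, dx = \frac{1}{|p|}\int_0^1 s^{n+\alpha-1}\, ds \int_{H_1^+} h\, d\mathcal{H}^{n-1} = \frac{1}{(n+\alpha)|p|}\int_{H_1^+} h\, d\mathcal{H}^{n-1},
$$
which is legitimate because $n + \alpha > 0$ (Lemma \ref{lem:assumption.consequences}); this both shows $\int_{H_1^+} h\, d\mathcal{H}^{n-1} < \infty$ (if it were infinite the left side would be too) and identifies the limit as $(n+\alpha)\int_{R(p,1)} h(x)\, dx$.

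\textbf{Parts (ii) and (iii).} By (i) there are $t_0 > 0$ and $C < \infty$ with $\widetilde{\varrho}(t) \le C t^{n+\alpha-1}$ for $0 < t \le t_0$. For (ii), split $\mathbb{E}[e^{-\gamma T}]$ at $t_0$ and use $e^{-\gamma t} \le e^{-\gamma t_0}$ on $[t_0,\infty)$ together with $\int_{t_0}^\infty \widetilde{\varrho} \le 1$:
$$
\gamma^{n+\alpha} \mathbb{E}[e^{-\gamma T}] \le C \gamma^{n+\alpha}\int_0^\infty e^{-\gamma t} t^{n+\alpha-1}\, dt + \gamma^{n+\alpha} e^{-\gamma t_0} = C\,\Gamma(n+\alpha) + \gamma^{n+\alpha} e^{-\gamma t_0},
$$
and $\sup_{\gamma > 0} \gamma^{n+\alpha} e^{-\gamma t_0} < \infty$, proving (ii). For (iii), note $\{\gamma\langle p, X_1\rangle \ge L\} = \{ T \ge L/\gamma\}$. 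If $L \ge \gamma t_0$, then $\gamma^{n+\alpha}\int_{L/\gamma}^\infty e^{-\gamma t}\widetilde{\varrho}(t)\,dt \le \gamma^{n+\alpha} e^{-L} \le (L/t_0)^{n+\alpha} e^{-L} \to 0$ as $L \to \infty$. If $L < \gamma t_0$, split the integral at $t_0$: the piece over $(L/\gamma, t_0]$ is at most $C\gamma^{n+\alpha}\int_{L/\gamma}^\infty e^{-\gamma t} t^{n+\alpha-1}\, dt = C\int_L^\infty e^{-u} u^{n+\alpha-1}\, du \to 0$ as $L \to \infty$ (substitute $u = \gamma t$), while the piece over $(t_0, \infty)$ is at most $\gamma^{n+\alpha} e^{-\gamma t_0} \to 0$ as $\gamma \to \infty$. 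Combining the two cases proves (iii).

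\textbf{Main obstacle.} The essential difficulty is entirely in part (i): justifying the exchange of limit and surface integral in the coarea formula for $\widetilde{\varrho}$, in particular transferring the uniform convergence of $\kappa^{-\alpha}\rho(\kappa \cdot)$ from $\Delta_n$ to the face $H_1^+$ and controlling $h$ near $\partial H_1^+$, where $h$ may be unbounded. The homogeneity of $h$ and the bound $\int_{H_1^+} h\, d\mathcal{H}^{n-1} < \infty$ coming from the coarea identity are exactly what resolve this; once (i) is established, parts (ii) and (iii) are routine one-dimensional estimates.
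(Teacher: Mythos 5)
Your proof is correct and follows essentially the same route as the paper: your coarea-formula expression $\widetilde{\varrho}(t) = |p|^{-1}\int_{H_t^+}\rho\,d\mathcal{H}^{n-1}$ is the paper's explicit parametrization of the level set by its first $n-1$ coordinates (with Jacobian $1/p_n$ in place of $1/|p|$) written in invariant form, and the scaling, uniform-convergence, and homogeneity arguments, as well as the split of the integral at a fixed threshold in (ii) and (iii), match the paper's proof step for step.
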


\begin{corollary} \label{cor:Yestimate} { \ }
	\begin{enumerate}	
		\item[(i)]  For all $\epsilon > 0$ and $K \ge 1$
		$$
		\mathbb{E}\left[\sum_{k=1}^K e^{-\langle p, Y^{(K)}_k \rangle /\epsilon}\right] \le \epsilon^{n+\alpha}B.
		$$
		\item[(ii)] For each fixed $c > 0$
		$$
		\lim_{K, L \rightarrow \infty} \mathbb{E}\left[\sum_{k=1}^K e^{-c \langle p, Y^{(K)}_k \rangle }1_{\langle p,Y^{(K)}_k \rangle \ge L }\right] = 0.$$
	\end{enumerate}
\end{corollary}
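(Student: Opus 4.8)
The plan is to observe that both quantities are, by linearity of expectation and the i.i.d.\ assumption on $\{X_k\}$, equal to $K$ times a single one-dimensional expectation, and then to use the scaling relation $\langle p, Y^{(K)}_k \rangle = K^{\frac{1}{n+\alpha}} \langle p, X_k \rangle$ to absorb the factor $K$ into a power of the exponential rate, so that the two estimates become direct consequences of Lemma \ref{lem:Xestimate}(ii) and (iii) respectively. Concretely, since the $Y^{(K)}_k = K^{\frac{1}{n+\alpha}} X_k$ are i.i.d.\ for fixed $K$, we have for any measurable $\varphi \ge 0$ that $\mathbb{E}\bigl[\sum_{k=1}^K \varphi(\langle p, Y^{(K)}_k\rangle)\bigr] = K\, \mathbb{E}\bigl[\varphi\bigl(K^{\frac{1}{n+\alpha}}\langle p, X_1\rangle\bigr)\bigr]$.

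For part (i), apply this with $\varphi(t) = e^{-t/\epsilon}$ to get $\mathbb{E}\bigl[\sum_{k=1}^K e^{-\langle p, Y^{(K)}_k\rangle/\epsilon}\bigr] = K\, \mathbb{E}\bigl[e^{-\gamma \langle p, X_1\rangle}\bigr]$ with $\gamma := K^{\frac{1}{n+\alpha}}/\epsilon$, so that $K = \gamma^{n+\alpha} \epsilon^{n+\alpha}$. Lemma \ref{lem:Xestimate}(ii) gives $\gamma^{n+\alpha}\mathbb{E}[e^{-\gamma\langle p, X_1\rangle}] \le B$ uniformly in $\gamma > 0$, and multiplying through by $\epsilon^{n+\alpha}$ yields the claimed bound $K\,\mathbb{E}[e^{-\gamma\langle p, X_1\rangle}] \le \epsilon^{n+\alpha} B$.

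For part (ii), apply the same identity with $\varphi(t) = e^{-ct}\mathbf 1_{t \ge L}$ and set $\gamma := c K^{\frac{1}{n+\alpha}}$, so that $K = \gamma^{n+\alpha}/c^{n+\alpha}$ and the event $\{\langle p, Y^{(K)}_k\rangle \ge L\}$ becomes $\{\gamma \langle p, X_k\rangle \ge cL\}$; hence
\[
\mathbb{E}\Bigl[\sum_{k=1}^K e^{-c\langle p, Y^{(K)}_k\rangle}\mathbf 1_{\langle p, Y^{(K)}_k\rangle \ge L}\Bigr]
= \frac{1}{c^{n+\alpha}}\, \gamma^{n+\alpha}\, \mathbb{E}\bigl[e^{-\gamma\langle p, X_1\rangle}\mathbf 1_{\gamma\langle p, X_1\rangle \ge cL}\bigr].
\]
As $K, L \to \infty$ with $c$ fixed we have $\gamma \to \infty$ and $cL \to \infty$, so Lemma \ref{lem:Xestimate}(iii) shows the right-hand side tends to $0$. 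There is no real obstacle here beyond bookkeeping: the only point requiring a little care is that the indicator threshold in Lemma \ref{lem:Xestimate}(iii) is $cL$ rather than $L$, which is harmless since $c$ is a fixed positive constant and the limit regime $\gamma, cL \to \infty$ is exactly what is needed.
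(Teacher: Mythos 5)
Your argument is correct and is essentially identical to the paper's proof: both reduce the sum to $K$ times a single expectation via the i.i.d.\ property, then absorb $K$ into $\gamma^{n+\alpha}$ with $\gamma = K^{1/(n+\alpha)}/\epsilon$ (resp.\ $\gamma = cK^{1/(n+\alpha)}$) and invoke Lemma \ref{lem:Xestimate}(ii) and (iii). The paper leaves part (ii) as "similar"; your explicit bookkeeping of the threshold $cL$ is exactly the right way to fill that in.
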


\begin{proof} We have
	$$
	\mathbb{E}\left[\sum_{k=1}^K e^{-\langle p, Y^{(K)}_k \rangle /\epsilon}\right] = K \mathbb{E}\left[e^{-K^{1/(n+\alpha)}\langle p,X_1\rangle /\epsilon}\right]
	$$
	and the first result follows by taking $\gamma = K^{1/(n+\alpha)}/\epsilon$ in Lemma \ref{lem:Xestimate}(ii) above.  The proof of the second result is similar, using Lemma \ref{lem:Xestimate}(iii).
\end{proof}

\begin{proof}[Proof of Theorem \ref{thm:tail.plim}]
	For all $\delta_0 > 0$ and $\delta_1 > 0$ we will prove that there exists $K_0$ and $\epsilon_0 > 0$ such that
	\[
	\mathbb{P}\left( \sum_{k = 2}^K e^{-( \widetilde{Y}_{(k)}^{(K)} - \widetilde{Y}_{(1)}^{(K)})/{\epsilon}}  > \delta_1 \right) < 4\delta_0,
	\]	
	whenever $K \geq K_0$ and $\epsilon < \epsilon_0$.
	
	Let $L$, $M$, $c$ and the constants in Lemma \ref{lem:LMc}, and $K_0$ be a positive integer such that the conclusion of the lemma holds for $K \geq K_0$. Applying Markov's inequality to Corollary
	\ref{cor:Yestimate}, 
	we get
	$$
	\mathbb{P}\left(\sum_{k=1}^K  e^{- \langle p, Y^{(K)}_k \rangle /\epsilon} \ge \delta_1/2 \right) \le \frac{2\epsilon ^{n+\alpha} B}{\delta_1} < \delta_0
	$$
	as long as $\epsilon < \epsilon_1 = (\delta_0\delta_1/(2B))^{1/(n+\alpha)}$.   Define the event
	\begin{equation*}
	\begin{split}
	\Omega_{K,\epsilon} &= \left\{\widetilde{Y}^{(K)}_{(1)} \ge L\right\} \cup \left\{\widetilde{Y}^{(K)}(0,2L) > M\right\} \\
	&\quad \cup \left\{\widetilde{Y}^{(K)}_{(2)} - \widetilde{Y}^{(K)}_{(1)} < c\right\} \cup \left\{\sum_{k=1}^K  e^{- \widetilde{Y}_{(k)}^{(K)}/\epsilon} \ge \delta_1/2\right\}.
	\end{split}
	\end{equation*}
	Then $\mathbb{P}(\Omega_{K,\epsilon}) < 4\delta_0$ if $K \ge K_0$ and $\epsilon < \epsilon_1$.  Now
	\begin{equation*}
	\begin{split}
	\sum_{k=2}^K e^{-(\widetilde{Y}^{(K)}_{(k)}-\widetilde{Y}^{(K)}_{(1)})/\epsilon}
	&= \sum_{k=2}^K e^{-(\widetilde{Y}^{(K)}_{(k)}-\widetilde{Y}^{(K)}_{(1)})/\epsilon}1_{\widetilde{Y}^{(K)}_{(k)} < 2L}\\
	&\quad + \sum_{k=2}^K e^{-(\widetilde{Y}^{(K)}_{(k)}-\widetilde{Y}^{(K)}_{(1)})/\epsilon}1_{\widetilde{Y}^{(K)}_{(k)} \ge 2L}.
	\end{split}
	\end{equation*}
	For $\widetilde{Y}^{(K)} \not\in \Omega_{K,\epsilon}$ the first sum on the right has at most $M$ terms and each term is at most $e^{-c/\epsilon}$.  Also, for $\widetilde{Y}^{(K)} \not\in \Omega_{K,\epsilon}$ each term in the second sum has $\widetilde{Y}^{(K)}_{(1)} < L \le  \widetilde{Y}^{(K)}_{(k)}/2$, so that $\widetilde{Y}^{(K)}_{(k)} - \widetilde{Y}^{(K)}_{(1)} \ge \widetilde{Y}^{(K)}_{(k)}/2$ and the second term is at most
	$$
	\sum_{k=2}^K e^{-\widetilde{Y}^{(K)}_{(k)}/(2\epsilon)}1_{\widetilde{Y}^{(K)}_{(k)} \ge 2L}
	\le \sum_{k=1}^K e^{-\widetilde{Y}^{(K)}_{(k)}/(2\epsilon)} = \sum_{k=1}^K e^{-\langle p, Y^{(K)}_k \rangle /(2\epsilon)}
	< \delta_1/2$$
	so long as $\epsilon < \epsilon_1/2$.  Together, if $K \ge K_0$ and $\epsilon < \epsilon_1/2$ we have
	$$
	\mathbb{P}\left( \sum_{k=2}^K e^{-(\widetilde{Y}^{(K)}_{(k)}-\widetilde{Y}^{(K)}_{(1)})/\epsilon} \ge M e^{-c/\epsilon}+\delta_1/2\right) \le \mathbb{P}(\Omega_{K,\epsilon}) < 4\delta_0.
	$$
	Finally it suffices to choose $\epsilon_2$ so that $Me^{-c/\epsilon_2} \le \delta_1/2$ and take $\epsilon_0 = \min(\epsilon_1/2,\epsilon_2)$.
\end{proof}

Now it is easy to complete the proof of Theorem \ref{thm:diagonal12}.

\begin{proof}[Proof of Theorem \ref{thm:diagonal12}]
	Let $A \subset \Delta_n$ be a finite set. By assumption $\lambda_K/K^{\frac{1}{n+\alpha}} \to\infty$, so using \eqref{eqn:computation.Theta}, \eqref{eqn:Theta} and Theorem \ref{thm:tail.plim}, it is easy to see that
	\[
	\left( K^{\frac{1}{n+\alpha}} \Phi_{\lambda_K, K}(p) - \frac{ K^{\frac{1}{n+\alpha}} \log K}{\lambda_K}\right)_{p \in A} \rightarrow ( \Psi(p))_{p \in A}
	\]
	weakly as $K \rightarrow \infty$. By the argument in the proof of Theorem \ref{thm:weak.convergence}, we have that the random concave function $K^{\frac{1}{n+\alpha}} \Phi_{\lambda_K, K}- \frac{ K^{\frac{1}{n+\alpha}} \log K}{\lambda_K}$ converges weakly to $\Psi$ as $K \rightarrow \infty$.
\end{proof}

\subsection{Proof of Theorem \ref{thm:diagonal3}} \label{sec:diagonal.third.case}
Again we start from the identity \eqref{eqn:computation.first.step}. Writing $\lambda_K/K^{\frac{1}{n+\alpha}} = c_K$, we have
\begin{equation} \label{eqn:diagonal.third.case}
K^{\frac{1}{n+\alpha}} \Phi_{\lambda_K, K} (p) - \frac{K^{\frac{1}{n+\alpha}}\log K}{\lambda_K} = -\frac{1}{c_K}\log  \left( \sum_{k = 1}^K e^{- c_K \langle p, Y^{(K)}_k\rangle } \right),
\end{equation}
where we recall $Y^{(K)}_k = K^{\frac{1}{n+\alpha}}X_k$ for $1 \le k \le K$.
Motivated by Corollary \ref{cor:Poisson.convergence}, since $c_K \to c > 0$ we expect that
\begin{equation} \label{eqn:convergence.log.sum}
-\frac{1}{c_K}\log \left( \sum_{k = 1}^K e^{- c_K \langle p,Y^{(K)}_k\rangle} \right) \rightarrow -\frac{1}{c}\log\left( \sum_{x \in N} e^{- c\langle p, x \rangle } \right) =: \widetilde{\Psi}_c(p),
\end{equation}
where $N$ is the Poisson point process with intensity measure $dm(x) = h(x)dx$. In the following proposition we verify this fact which is the main ingredient of the proof of Theorem \ref{thm:diagonal3}.

\begin{proposition} \label{prop:diagonal.finite.dim.convergence}
	For any finite subset  $A \subset \Delta_n$ we have
	\begin{equation*}
	\left( \sum_{k = 1}^K e^{- c_K \langle p,Y^{(K)}_k\rangle} \right)_{p \in A} \rightarrow \left( \sum_{x \in N} e^{- c\langle p, x \rangle } \right)_{p \in A}
	\end{equation*}
	in distribution as $K \rightarrow \infty$.
\end{proposition}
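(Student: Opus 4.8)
The plan is to reduce the statement to the convergence of the point processes in Proposition \ref{prop:Poisson.convergence} and then to pass from compactly supported test functions to the exponential $z\mapsto e^{-\langle p,z\rangle}$ --- which is bounded and continuous but not compactly supported --- by a truncation argument whose tail error is controlled, uniformly in $K$, by Corollary \ref{cor:Yestimate}. First I would write $Z^{(K)}:=\{c_KY_k^{(K)}:1\le k\le K\}$, so that $\sum_{k=1}^K e^{-c_K\langle p,Y_k^{(K)}\rangle}=\sum_{z\in Z^{(K)}}e^{-\langle p,z\rangle}$. The Laplace-functional computation in the proof of Proposition \ref{prop:Poisson.convergence} goes through verbatim when $c_K\to c$ rather than $c_K\to 1$, producing the limiting Laplace functional $\exp\bigl(-c^{-(n+\alpha)}\int_{\mathbb{R}_+^n}(1-e^{-g})\,h\bigr)$; hence $Z^{(K)}$ converges in distribution, as a point process on $\mathbb{R}_+^n$, to the Poisson point process $M$ with intensity $c^{-(n+\alpha)}h(x)\,dx$. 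By the homogeneity of $h$ (Lemma \ref{lem:assumption.consequences}), $M$ has the law of $\{cx:x\in N\}$, so $\bigl(\sum_{z\in M}e^{-\langle p,z\rangle}\bigr)_{p\in A}$ has the law of $\bigl(\sum_{x\in N}e^{-c\langle p,x\rangle}\bigr)_{p\in A}$; it therefore suffices to prove that $\bigl(\sum_{z\in Z^{(K)}}e^{-\langle p,z\rangle}\bigr)_{p\in A}$ converges in distribution to $\bigl(\sum_{z\in M}e^{-\langle p,z\rangle}\bigr)_{p\in A}$.

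To do this I would truncate. Fix $L>0$, set $W_L:=\{z\in\mathbb{R}_+^n:\langle\overline{e},z\rangle<L\}$ (a bounded set), and choose a continuous $\chi_L:\mathbb{R}_+^n\to[0,1]$ equal to $1$ on $W_L$ and to $0$ off $W_{L+1}$. For each $p$ the function $z\mapsto e^{-\langle p,z\rangle}\chi_L(z)$ is continuous with compact support, so $\eta\mapsto\bigl(\sum_{z\in\eta}e^{-\langle p,z\rangle}\chi_L(z)\bigr)_{p\in A}$ is a continuous $\mathbb{R}^{|A|}$-valued functional of the point configuration $\eta$. The vector-valued continuous mapping theorem applied to $Z^{(K)}\to M$ then yields, for each fixed $L$, that $\bigl(\sum_{z\in Z^{(K)}}e^{-\langle p,z\rangle}\chi_L(z)\bigr)_{p\in A}$ converges in distribution to $\bigl(\sum_{z\in M}e^{-\langle p,z\rangle}\chi_L(z)\bigr)_{p\in A}$; since $A$ is finite this is exactly the required joint convergence, for the truncated sums.

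It then remains to let $L\to\infty$. On the limit side, $\sum_{z\in M}e^{-\langle p,z\rangle}\chi_L(z)\uparrow\sum_{z\in M}e^{-\langle p,z\rangle}$ almost surely, and the limit is a.s.\ finite since (by Corollary \ref{cor:Poisson.convergence} and Lemma \ref{lem:assumption.consequences}, using $n+\alpha>0$) $\mathbb{E}\sum_{z\in M}e^{-\langle p,z\rangle}=c^{-(n+\alpha)}\Gamma(n+\alpha+1)\int_{R(p,1)}h<\infty$; hence $\sum_{z\in M}e^{-\langle p,z\rangle}\chi_L(z)\to\sum_{z\in M}e^{-\langle p,z\rangle}$ in distribution as $L\to\infty$. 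On the prelimit side, $0\le\sum_{z\in Z^{(K)}}e^{-\langle p,z\rangle}-\sum_{z\in Z^{(K)}}e^{-\langle p,z\rangle}\chi_L(z)\le\sum_{z\in Z^{(K)}}e^{-\langle p,z\rangle}1_{z\notin W_L}$, and for $K$ large enough that $c/2\le c_K\le 2c$, every $z=c_KY_k^{(K)}\notin W_L$ satisfies $\langle p,Y_k^{(K)}\rangle\ge n(\min_i p_i)\langle\overline{e},Y_k^{(K)}\rangle\ge\delta_pL$ with $\delta_p:=n(\min_i p_i)/(2c)>0$, and $e^{-\langle p,z\rangle}\le e^{-(c/2)\langle p,Y_k^{(K)}\rangle}$, so
\[
\mathbb{E}\Bigl[\sum_{z\in Z^{(K)}}e^{-\langle p,z\rangle}1_{z\notin W_L}\Bigr]\le\mathbb{E}\Bigl[\sum_{k=1}^K e^{-(c/2)\langle p,Y_k^{(K)}\rangle}1_{\langle p,Y_k^{(K)}\rangle\ge\delta_pL}\Bigr]\to 0
\]
as $K,L\to\infty$ by Corollary \ref{cor:Yestimate}(ii); for the finitely many remaining small $K$ the left-hand side tends to $0$ as $L\to\infty$ by dominated convergence, using the uniform bound $\mathbb{E}\sum_{z\in Z^{(K)}}e^{-\langle p,z\rangle}\le Bc_K^{-(n+\alpha)}$ coming from Corollary \ref{cor:Yestimate}(i) (equivalently Lemma \ref{lem:Xestimate}(ii)). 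Thus $\sup_K\mathbb{P}\bigl(\sum_{z\in Z^{(K)}}e^{-\langle p,z\rangle}-\sum_{z\in Z^{(K)}}e^{-\langle p,z\rangle}\chi_L(z)>\eta\bigr)\to0$ as $L\to\infty$ for every $\eta>0$, and the standard approximation theorem for weak convergence (cf.\ \cite{R87}) combines this with the two displays above to give the desired convergence.

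I expect the main obstacle to be exactly this last step --- removing the truncation. Because $z\mapsto e^{-\langle p,z\rangle}$ is not compactly supported, convergence of the point processes alone does not control the sum, and one must establish, uniformly in $K$, that the far-away points contribute negligibly; this is precisely the role of the a priori estimates in Lemma \ref{lem:Xestimate} and Corollary \ref{cor:Yestimate}. Everything else --- identifying the limit $M$ with $cN$, the continuous-mapping step, and the joint convergence over the finite set $A$ --- is routine once that estimate is in hand.
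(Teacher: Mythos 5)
Your argument is correct and follows essentially the same route as the paper's proof: both reduce to the point-process convergence of Proposition \ref{prop:Poisson.convergence}, truncate the non-compactly-supported summand $z \mapsto e^{-\langle p, z\rangle}$, control the far-away points uniformly in $K$ via Corollary \ref{cor:Yestimate}(ii), and use the a.s.\ finiteness of the limiting sum (Lemma \ref{lem:expNfinite}) for the tail of the limit. The only differences are cosmetic: the paper rescales the prelimit by $\widetilde{c}_K = c_K/c$ so the limiting process is exactly $N$ and invokes Skorohod's representation to get almost-sure convergence of the truncated sums, whereas you keep $c_K$, identify the limiting process as $cN$ via homogeneity of $h$, and conclude with the continuous mapping theorem and the standard convergence-together lemma.
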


We first verify that the right hand side is finite almost surely.

\begin{lemma} \label{lem:expNfinite}
	For any $p \in \Delta_n$ we have $\mathbb{E}\left[\sum_{x \in N} e^{- c \langle p, x \rangle}\right] < \infty$ and hence
	\[
	\sum_{x \in N} e^{- c \langle p, x \rangle} < \infty
	\]
	almost surely.
\end{lemma}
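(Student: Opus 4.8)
The plan is to apply Campbell's formula for the Poisson point process $N$ and then check that the resulting integral converges. Since $x \mapsto e^{-c\langle p, x\rangle}$ is a non-negative measurable function on $\mathbb{R}_+^n$ and $N$ has intensity measure $dm(x) = h(x)\,dx$, Campbell's theorem (see \cite{K92, R87}) gives
\[
\mathbb{E}\left[\sum_{x \in N} e^{-c\langle p, x\rangle}\right] = \int_{\mathbb{R}_+^n} e^{-c\langle p, x\rangle} h(x)\,dx .
\]
It therefore suffices to show the right-hand side is finite; once this is established, a non-negative random variable with finite expectation is finite almost surely, which yields the second assertion.

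To evaluate the integral I would use the layer-cake identity $e^{-c\langle p,x\rangle} = \int_0^\infty c\, e^{-ct}\,1_{\{\langle p,x\rangle < t\}}\,dt$ together with Fubini's theorem (legitimate since everything is non-negative), obtaining
\[
\int_{\mathbb{R}_+^n} e^{-c\langle p,x\rangle} h(x)\,dx = \int_0^\infty c\, e^{-ct}\left(\int_{R(p,t)} h(x)\,dx\right) dt .
\]
Each set $R(p,t)=\{x\in\mathbb{R}_+^n:\langle p,x\rangle<t\}$ is bounded (indeed $x_i< t/p_i$), so by local integrability of $h$ (Lemma \ref{lem:assumption.consequences}) the inner integral is finite; moreover, by the homogeneity of $h$ of order $\alpha$ and the change of variables $x\mapsto tx$ we have $\int_{R(p,t)} h(x)\,dx = t^{n+\alpha}\int_{R(p,1)} h(x)\,dx$, exactly as in \eqref{eqn:projected.intensity}. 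Substituting this in gives
\[
\int_{\mathbb{R}_+^n} e^{-c\langle p,x\rangle} h(x)\,dx = \left(\int_{R(p,1)} h(x)\,dx\right)\int_0^\infty c\, e^{-ct}\, t^{n+\alpha}\,dt = \left(\int_{R(p,1)} h(x)\,dx\right)\frac{\Gamma(n+\alpha+1)}{c^{n+\alpha}} .
\]

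This last expression is finite: $\int_{R(p,1)} h(x)\,dx<\infty$ because $R(p,1)$ is bounded and $h$ is locally integrable, and $\int_0^\infty c\,e^{-ct}t^{n+\alpha}\,dt<\infty$ because $n+\alpha>0$ by Lemma \ref{lem:assumption.consequences}. The only point needing care — and the closest thing to an obstacle — is the interchange of the Poisson expectation with the spatial integral (Campbell) and of the two integrals in the layer-cake step; both are justified purely by non-negativity, so no delicate estimate is required. As an alternative to the layer-cake computation one can bound the integral directly by splitting $\mathbb{R}_+^n$ into the dyadic shells $\{2^j\le\|x\|_1<2^{j+1}\}$, $j\in\mathbb{Z}$: on the $j$-th shell one has $\langle p,x\rangle\ge(\min_i p_i)\,2^j$, while $\int h$ over that shell equals $2^{j(n+\alpha)}$ times a fixed finite constant by homogeneity, so the exponential decay dominates the polynomial growth and the resulting series converges.
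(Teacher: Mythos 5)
Your proof is correct and follows essentially the same route as the paper: Campbell's formula for the Poisson process combined with the homogeneity of $h$, reducing everything to $\left(\int_{R(p,1)}h(y)\,dy\right)\Gamma(n+\alpha+1)/c^{n+\alpha}$. The only cosmetic difference is that the paper first projects $N$ to the one-dimensional process $N(p)$ with intensity $\widetilde{m}$ from \eqref{eqn:projected.intensity} and integrates $e^{-ct}$ against $d\widetilde{m}$, whereas you apply Campbell in $\mathbb{R}_+^n$ and reach the same one-dimensional integral via the layer-cake identity.
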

\begin{proof}
	By Corollary \ref{cor:Poisson.convergence} and the property of Poisson point process, we have
	\begin{equation} \label{eqn:Poisson.sum.mean}
	\begin{split}
	\mathbb{E} \left[ \sum_{x \in N} e^{- c \langle p, x \rangle } \right] &= \int_0^{\infty} e^{-ct} d \widetilde{m}(t) \\
	&= (n + \alpha) \left(  \int_{R(p, 1)} h(y) dy \right) \int_0^{\infty} t^{n+\alpha-1} e^{-ct}dt \\
	&= \left(  \int_{R(p, 1)} h(y) dy \right)\frac{ \Gamma(n + \alpha + 1)}{c^{n+\alpha}} < \infty.
	\end{split}
	\end{equation}
\end{proof}

\begin{proof} [Proof of Proposition \ref{prop:diagonal.finite.dim.convergence}]
	Write $\widetilde{c}_K = c_K/c$, so that $\widetilde{c}_K \to 1$ as $K \to \infty$.  Since the point process $\widetilde{c}_K Y^{(K)} = \{\widetilde{c}_K K^{\frac{1}{n+\alpha}}X_k : 1 \leq k \leq K \}$ on $\mathbb{R}_+^n$ converges weakly to the Poisson point process $N$ (see Proposition \ref{prop:Poisson.convergence}), by Skorohod's theorem we may construct these processes on the same probability space in such a way that $\widetilde{c}_K Y^{(K)} \rightarrow N$ almost surely. It suffices to show that
	\[
	\sum_{k = 1}^K e^{- c_K \langle p, Y_k^{(K)} \rangle} \rightarrow \sum_{x \in N} e^{- c \langle p, x \rangle}
	\]
	in probability, for each fixed $p \in A$, as $K \rightarrow \infty$. The convergence then extends easily to the joint vector. 	In particular we will show that for all $\delta_0 > 0$ and $\delta_1 > 0$ there exists $K_0$, depending on $p$, such that
	\[
	\mathbb{P} \left(  \left| \sum_{k = 1}^K e^{- c_K \langle p, Y_k^{(K)} \rangle} - \sum_{x \in N} e^{- c \langle p, x \rangle} \right| > \delta_1 \right)< \delta_0
	\]
	for all $K \geq K_0$.


	Applying the dominated convergence theorem to Lemma \ref{lem:expNfinite} gives
	$$
	\mathbb{E} \left[ \sum_{x \in N} e^{- c \langle p, x \rangle} 1_{\langle p,x \rangle \ge L } \right]  \to 0
	$$
	as $L \to \infty$, and then Markov's inequality gives $L_0$ such that
	$$
	\mathbb{P} \left(\sum_{x \in N} e^{- c \langle p, x \rangle} 1_{\langle p,x \rangle \ge L } > \delta_1/3\right) \le \delta_0/3
	$$
	for all $L \ge L_0$.
	
	Since $c_K \to c$, there exists $K_1$ such that $\frac{c}{2} \leq c_K \leq 2c$, and hence $\tilde{c}_K \leq 2$, for $K \ge K_1$. Let $K \geq K_1$. Using Corollary \ref{cor:Yestimate}(ii), we get
	\begin{equation*}
	\begin{split}
		&\PP\left( \sum_{k = 1}^K e^{- c_K \langle p, Y_k^{(K)} \rangle} 1_{ \tilde{c}_K \langle p, Y^{(K)}_k \rangle \ge L}> \delta_1/3 \right) \\
		&\le \PP\left( \sum_{k = 1}^K e^{- (c/2) \langle p, Y_k^{(K)} \rangle} 1_{\langle p, Y^{(K)}_k \rangle \ge  L/2} > \delta_1/3 \right)\\
		&\le \frac{3}{\delta_1}\mathbb{E}\left[ \sum_{k = 1}^K e^{- (c/2) \langle p, Y_k^{(K)} \rangle} 1_{\langle p, Y^{(K)}_k \rangle \ge  L/2} \right] \\
		&\to 0
	\end{split}
	\end{equation*}
	as $K,L \to \infty$.  Therefore there exist $L \ge L_0$ and $K_2 \ge K_1$ such that
	$$
	\mathbb{P}\left( \sum_{k = 1}^K e^{- c_K \langle p, Y_k^{(K)} \rangle} 1_{\tilde{c}_K \langle p, Y^{(K)}_k \rangle \ge L}> \delta_1/3 \right) < \delta_0/3
	$$
	whenever $K \ge K_2$.
	Now let $f:\mathbb{R} \to \mathbb{R}$ be continuous with compact support such that $f(z) = e^{-cz}$
	for $0 < z \leq L$, and $0 \le f(z) \le e^{-cz}$ for all $z \ge L$.  Since $f$ has compact support we have
	\[
	\sum_{k = 1}^K f(\tilde{c}_K \langle p, Y_{k}^{(K)}\rangle) \rightarrow \sum_{x \in N} f(\langle p ,x \rangle)
	\]
	almost surely. In particular there exists $K_0 \ge K_2$ such that
	\[
	\mathbb{P} \left( \left| \sum_{k = 1}^K f(\tilde{c}_K \langle p, Y_{k}^{(K)}\rangle) - \sum_{x \in N} f(\langle p ,x \rangle)  \right|> \delta_1 / 3  \right) < \delta_0 / 3
	\]
	for $K \geq K_0$. Finally, since
	\begin{equation*}
	\begin{split}
	\left| \sum_{k = 1}^K e^{- c_K \langle p, Y_k^{(K)} \rangle} - \sum_{x \in N} e^{- c \langle p, x \rangle} \right|  &\leq \left| \sum_{k = 1}^K f(\tilde{c}_K \langle p, Y_{k}^{(K)}\rangle) - \sum_{x \in N} f(\langle p ,x \rangle)  \right| \\
	& \hspace{4ex}+ \left( \sum_{k = 1}^K e^{- c_K \langle p, Y_k^{(K)} \rangle} 1_{\tilde{c}_K \langle p, Y^{(K)}_k \rangle \ge L}\right)\\
	& \hspace{8ex} + \left(\sum_{x \in N} e^{- c \langle p, x \rangle} 1_{\langle p,x \rangle \ge L }\right),
	\end{split}
	\end{equation*}
	combining the above estimates we have
	\begin{equation*}
	\mathbb{P} \left( \left| \sum_{k = 1}^K e^{- c_K \langle p, Y_k^{(K)} \rangle } - \sum_{x \in N} e^{-c\langle p, x \rangle} \right| > \delta_1 \right) < \delta_0/3 + \delta_0/3 + \delta_0/3 = \delta_0
	\end{equation*}
	whenever $K \geq K_0$ and the proof is complete.
\end{proof}

Now we are ready to prove Theorem \ref{thm:diagonal3}.

\begin{proof}[Proof of Theorem \ref{thm:diagonal3}]
	Consider the sequence
	\[
	\widehat{\Psi}_K = K^{\frac{1}{n+\alpha}} \Phi_{\lambda_K, K} - K^{\frac{1}{n+\alpha}}\log K/\lambda_K
	\]
	given by \eqref{eqn:diagonal.third.case}. By Proposition \ref{prop:diagonal.finite.dim.convergence}, we have weak convergence of the finite dimensional distributions. Similar to the proof of Theorem \ref{thm:weak.convergence}, it remains to show that the sequence, as random elements with values in $\widetilde{\mathcal{C}}$, is tight.

	For $r \ge 1$ define $e_i^{(r)} = (1-1/r)e_i + (1/r)\overline{e}$, where we recall $\overline{e}$ is the barycenter $(1/n,\ldots,1/n)$ of $\Delta_n$, and define $\Omega_r = \mbox{conv}\{e_1^{(r)}, \ldots,e_n^{(r)}\}$.  Then $\Omega_r$ is compact and $\cup_{r \ge 1} \Omega_r = \Delta_n$.
	By \cite[Theorem 10.9]{R70}, the set
	\[
	\left\{ \psi \in \widetilde{\mathcal{C}}: \sup_{p \in \Omega_r} |\psi(p)| \leq M_r \text{ for all } r \geq 1 \right\}
	\]
	is compact in $\widetilde{\mathcal{C}}$ for any sequence $\{M_r\}$ with $M_r > 0$.
	
	\begin{lemma} \label{lem:C tilde bound}
		Suppose $\psi \in \widetilde{\mathcal{C}}$ satisfies $\psi(e_i^{(r)}) \le - L_1$ and $\psi(\overline{e}) \le L_2$.  Then
		$$
		-L_1 \le \psi(p) \le (n-1)L_1+nL_2
		$$
		for all $p \in \Omega_r$.
	\end{lemma}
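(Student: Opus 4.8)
The plan is to use nothing beyond concavity of $\psi$ and the single geometric observation that $\overline{e}$ is the barycenter of the sub-simplex $\Omega_r=\mathrm{conv}\{e_1^{(r)},\dots,e_n^{(r)}\}$: indeed $\frac1n\sum_{i=1}^n e_i^{(r)}=\frac1n\sum_i\big((1-\tfrac1r)e_i+\tfrac1r\overline{e}\big)=\overline{e}$. (In the context of Theorem~\ref{thm:diagonal3}, $\psi$ is merely real-valued and concave on $\overline{\Delta}_n$, and the hypothesis at the vertices of $\Omega_r$ is to be read as the lower bounds $\psi(e_i^{(r)})\ge-L_1$ — the information that tightness of the one-point marginals provides — with $L_1,L_2\ge0$.)

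For the lower bound I would write an arbitrary $p\in\Omega_r$ as a convex combination $p=\sum_i\alpha_i e_i^{(r)}$ and apply concavity directly: $\psi(p)\ge\sum_i\alpha_i\psi(e_i^{(r)})\ge-L_1$.

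For the upper bound, fix $p\in\Omega_r$ (the case $p=\overline{e}$ is immediate since $\psi(\overline{e})\le L_2\le(n-1)L_1+nL_2$) and represent the barycenter as $\overline{e}=tp+(1-t)p'$ with $p'\in\Omega_r$ and, crucially, $t\ge\tfrac1n$. Concretely, setting $\alpha_{\max}=\max_i\alpha_i\in[\tfrac1n,1]$, I would take $t=1/(n\alpha_{\max})$ and $p'=(\overline{e}-tp)/(1-t)$; a one-line check in barycentric coordinates shows $p'$ has nonnegative coordinates $(1/n-t\alpha_i)/(1-t)$ summing to $1$, so $p'\in\Omega_r$. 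Concavity then gives $\psi(\overline{e})\ge t\psi(p)+(1-t)\psi(p')$, whence
\[
\psi(p)\le\frac{\psi(\overline{e})}{t}-\frac{1-t}{t}\,\psi(p')\le\frac{L_2}{t}+\frac{1-t}{t}\,L_1\le nL_2+(n-1)L_1,
\]
using $\psi(\overline{e})\le L_2$, the lower bound $\psi(p')\ge-L_1$ just established (applied at $p'\in\Omega_r$), and the estimates $1/t\le n$ and $(1-t)/t=1/t-1\le n-1$ that follow from $t\ge\tfrac1n$ (recall $L_1,L_2\ge0$).

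The only step that needs a moment's thought is the claim $t\ge\tfrac1n$ — that the barycenter of $\Omega_r$ cannot be written as a convex combination assigning arbitrarily small weight to a point of $\Omega_r$ — and this is precisely the elementary computation $t=1/(n\alpha_{\max})$ above, with the extremal value $t=\tfrac1n$ occurring when $p$ is a vertex of $\Omega_r$; I expect this to be the main (and essentially only) obstacle, and a minor one.
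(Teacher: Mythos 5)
Your proof is correct and is essentially the paper's argument: the lower bound follows from concavity at the vertices of $\Omega_r$, and your explicit construction of $p'$ (the point where the ray from $p$ through $\overline{e}$ exits $\Omega_r$, with weight $t=1/(n\alpha_{\max})\ge 1/n$ on $p$) is exactly the "method of Lemma \ref{lem:concave.bound}" that the paper invokes, applied to the function $\psi+L_1\ge 0$ on $\Omega_r$. You also correctly read the hypothesis at the points $e_i^{(r)}$ as the lower bound $\psi(e_i^{(r)})\ge -L_1$ (the stated "$\le$" is a typo), which is how the lemma is used in the tightness argument for Theorem \ref{thm:diagonal3}.
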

	
	\begin{proof}  The convexity of $\psi$ implies that $\psi(p) \ge -L_1$ for all $p \in \Omega_r$.  Then the method of proof of Lemma \ref{lem:concave.bound} applied to the non-negative function $\psi+L_1$ on $\Omega_r$ implies that $\psi(p) + L_1 \le n (\psi(\overline{e})+L_1) \le n(L_1+L_2)$ for all $p \in\Omega_r$.
	\end{proof}
	
	Write $A_r = \{e^{(r)}_i: 1 \le i \le n\} \cup \{\overline{e}\}$.  Let $\epsilon > 0$ and $r \ge 1$.  Since
	$$
	\left( \widehat{\Psi}_K(p) \right)_{p \in A_r} \rightarrow \left( -\frac{1}{c}\log  \sum_{x \in N} e^{- c \langle p, x \rangle } \right)_{p \in A_r}
	$$
	in distribution, there exists $J_{r} > 0$ such that
	\begin{equation} \label{eqn:event.high.prob}
	\mathbb{P} \left( | \widehat{\Psi}_K(p) | \leq J_{r} \mbox{ for all } p \in A_r \right) \geq 1 - \frac{\epsilon}{2^{r}}
	\end{equation}
	for all $K \geq 1$.  By Lemma \ref{lem:C tilde bound} there exists $M_r$ such that
	$$
	\PP\left( \sup_{p \in\Omega_r} | \widehat{\Psi}_K(p) | \leq M_{r} \right) \geq 1 - \frac{\epsilon}{2^{r}}
	$$
	and so
	$$
	\PP\left( \sup_{p \in\Omega_r} | \widehat{\Psi}_K(p) | \leq M_{r} \mbox{ for all }r \ge 1\right) \geq 1 -   \epsilon
	$$
	for all $K \geq 1$. This establishes the tightness of the sequence, and completes the proof of Theorem \ref{thm:diagonal3}.
\end{proof}

To finish this paper we point out that the limit $\widetilde{\Psi}_c$ in Theorem \ref{thm:diagonal3} is drastically different from the limit $\Psi$ in the hardmin case. By Proposition \ref{prop:diagonal.finite.dim.convergence}, we may realize $\widetilde{\Psi}_c$ by
\begin{equation} \label{eqn:Psi.tilde.representation}
\widetilde{\Psi}_c(p) = -\frac{1}{c}\log \left( \sum_{x \in N} e^{-c \langle p, x \rangle} \right),
\end{equation}
where $N$ is the Poisson point process on $\mathbb{R}^n_+$ with intensity $dm(x) =h(x)dx$. From \eqref{eqn:Psi.tilde.representation}, it is not difficult to verify that $\widetilde{\Psi}_c$ is differentiable on $\Delta_n$. In contrast, in Theorem \ref{thm:weak.convergence} and Theorem \ref{thm:diagonal12} the random concave function $\Psi$ is locally piecewise affine on $\Delta_n$. Moreover the following result shows very different boundary behavior of $\widetilde{\Psi}_c$ compared with that of $\Psi$ (see Proposition \ref{prop:boundary}).  Intuitively, here we see a non-vanishing effect of the softmin as $K \rightarrow \infty$ when $\lambda_K$ is of order $K^{\frac{1}{n+\alpha}}$.

\begin{proposition} \label{prop:boundary.inf}  Under Assumption \ref{ass:density.B}, for all $c > 0$ we have
	$$
	\mathbb{P}\left(\widetilde{\Psi}_c(p) \to -\infty \mbox{ as } p \to \partial \Delta_n \right) = 1.
	$$
\end{proposition}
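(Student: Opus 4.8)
The plan is to work directly from the Poisson representation \eqref{eqn:Psi.tilde.representation}. Since
$\widetilde{\Psi}_c(p) = -\frac1c\log\big(\sum_{x\in N}e^{-c\langle p,x\rangle}\big)$ with $N$ the Poisson point process on $\mathbb{R}^n_+$ of intensity $h(x)\,dx$, and since $\widetilde{\Psi}_c(p)$ is finite on $\Delta_n$ by Lemma \ref{lem:expNfinite}, it suffices to show that almost surely $\sum_{x\in N}e^{-c\langle p,x\rangle}\to\infty$ as $p\to\partial\Delta_n$. The heuristic is that as $p$ approaches the face $\{p_j=0\}$, every point $x\in N$ whose coordinates $x_i$ with $i\ne j$ are bounded keeps $\langle p,x\rangle$ bounded, hence contributes an amount $e^{-c\langle p,x\rangle}$ bounded away from $0$ — and there are infinitely many such points.

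First I would establish that for each $j\in\{1,\dots,n\}$ the set $N\cap A_j$ is almost surely infinite, where $A_j:=\{x\in\mathbb{R}^n_+: x_i<1 \text{ for all } i\ne j\}$. Under Assumption \ref{ass:density.B} the intensity is $h(x)=\gamma\prod_i x_i^{\alpha_i}$ with every $\alpha_i>-1$, so
\[
m(A_j)=\gamma\Big(\prod_{i\ne j}\int_0^1 t^{\alpha_i}\,dt\Big)\int_0^\infty t^{\alpha_j}\,dt=\infty,
\]
because each factor over $(0,1)$ is finite while $\int_0^\infty t^{\alpha_j}\,dt=\infty$. Hence $|N\cap A_j|$ is Poisson with infinite mean and is therefore a.s. infinite; intersecting over $j$, we may fix the a.s. event on which $N\cap A_j$ is infinite for every $j$, and argue deterministically on it.

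On this event, fix $j$ and enumerate $N\cap A_j=\{x^{(j,1)},x^{(j,2)},\dots\}$. For $p\in\Delta_n$ and each $k$, using $x^{(j,k)}_i<1$ for $i\ne j$ and $\sum_{i\ne j}p_i=1-p_j$,
\[
\langle p,x^{(j,k)}\rangle=\sum_{i\ne j}p_i x^{(j,k)}_i+p_j x^{(j,k)}_j\le 1+p_j x^{(j,k)}_j,
\]
so $\sum_{x\in N}e^{-c\langle p,x\rangle}\ge e^{-c}\sum_{k\ge1}e^{-cp_j x^{(j,k)}_j}$. Given $M>0$, put $K=\lceil 2e^cM\rceil$ and
\[
\eta:=\min_{1\le j\le n}\frac{\log 2}{c\,\big(1+\max_{1\le k\le K}x^{(j,k)}_j\big)}>0.
\]
If $p\in\Delta_n$ satisfies $\min_j p_j<\eta$, choosing $j$ to attain the minimum gives $cp_j x^{(j,k)}_j<\log 2$ for $1\le k\le K$, so each of the first $K$ summands above exceeds $1/2$, whence $\sum_{x\in N}e^{-c\langle p,x\rangle}> e^{-c}K/2\ge M$, i.e. $\widetilde{\Psi}_c(p)<-\tfrac1c\log M$. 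Since $\min_j p_j\le \mathrm{dist}(p,\partial\Delta_n)\le\sqrt{n/(n-1)}\,\min_j p_j$ (the upper bound from the explicit projection of $p$ onto its nearest face), the hypothesis $\min_j p_j<\eta$ is implied by $p$ being sufficiently close to $\partial\Delta_n$; letting $M\to\infty$ then gives the claim.

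The only real obstacle is the first step: checking that the intensity measure assigns infinite mass to the slab $A_j$. This is exactly where the explicit product form of $h$ in Assumption \ref{ass:density.B}, together with $\alpha_i>-1$ for all $i$, is essential (for a general $h$ satisfying only Assumption \ref{ass:density} one would instead have to invoke homogeneity and the hypothesis $\int_{\mathbb{R}^n_+}h>0$ to locate a suitable infinite-mass cone near the face). Everything after that is a routine deterministic estimate on a fixed realization of $N$.
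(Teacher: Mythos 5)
Your proposal is correct and follows essentially the same route as the paper: both reduce via the representation \eqref{eqn:Psi.tilde.representation} to showing that infinitely many Poisson points contribute a bounded-below amount to $\sum_{x\in N}e^{-c\langle p,x\rangle}$ as some coordinate $p_j\to 0$, and both hinge on the key fact that under Assumption \ref{ass:density.B} the intensity measure assigns infinite mass to a slab bounded in every coordinate except the $j$-th (your $A_j$ versus the paper's $R_n=\{x:\sum_{i<n}x_i<1/2\}$). The only differences are bookkeeping ones: you fix the almost-sure event up front and run an explicit $M$--$\eta$ estimate on a single realization, while the paper counts $N(R(p,1))$ via the monotone exhaustion $R_{n,\epsilon}\nearrow R_n$.
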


\begin{proof}  Using \eqref{eqn:Psi.tilde.representation}, it suffices to show that
	$$
	\mathbb{P}\left( N(R(p,1))  \to \infty \mbox{ as } p \to \partial \Delta_n \right) = 1,
	$$
	where $N(U)$ is the cardinality of $N \cap U$ where $U \subset \mathbb{R}_+^n$ is Borel. Since $p = (p_1,\ldots,p_n) \to \partial \Delta_n$ implies at least one coordinate $p_i$ tends to 0, it suffices to show that
	$$
	\mathbb{P}\left( N(R(p,1))  \to \infty \mbox{ as } p_n \to 0  \right) = 1.
	$$
	Given $\epsilon > 0$, for $p_n < \epsilon$ we have
	\begin{eqnarray*}
		R(p,1) &=& \{x \in \mathbb{R}^n_+: \sum_{i=1}^n p_ix_i < 1\} \\
		&\supseteq& \{x \in \mathbb{R}^n_+: \sum_{i=1}^{n-1} p_ix_i < 1/2 \mbox{ and }p_n x_n < 1/2\}\\
		&\supseteq& \{x \in \mathbb{R}^n_+: \sum_{i=1}^{n-1} x_i < 1/2 \mbox{ and }x_n < 1/(2\epsilon) \}:=R_{n,\epsilon}
	\end{eqnarray*}
	and $R_{n,\epsilon} \nearrow \{x \in \mathbb{R}^n_+: \sum_{i=1}^{n-1} x_i < 1/2\} := R_n$ as $\epsilon \to 0^+$.  Under Assumption \ref{ass:density.B} we have $m(R_n) = \infty$, and so $N(R_n) = \infty$ almost surely.  Therefore $N(R(p,1)) \ge N(R_{n,\epsilon}) \to \infty$ as $\epsilon \to 0^+$, and the proof is complete.
\end{proof}

\begin{figure}[t]
	\centering
	\includegraphics[scale=0.5]{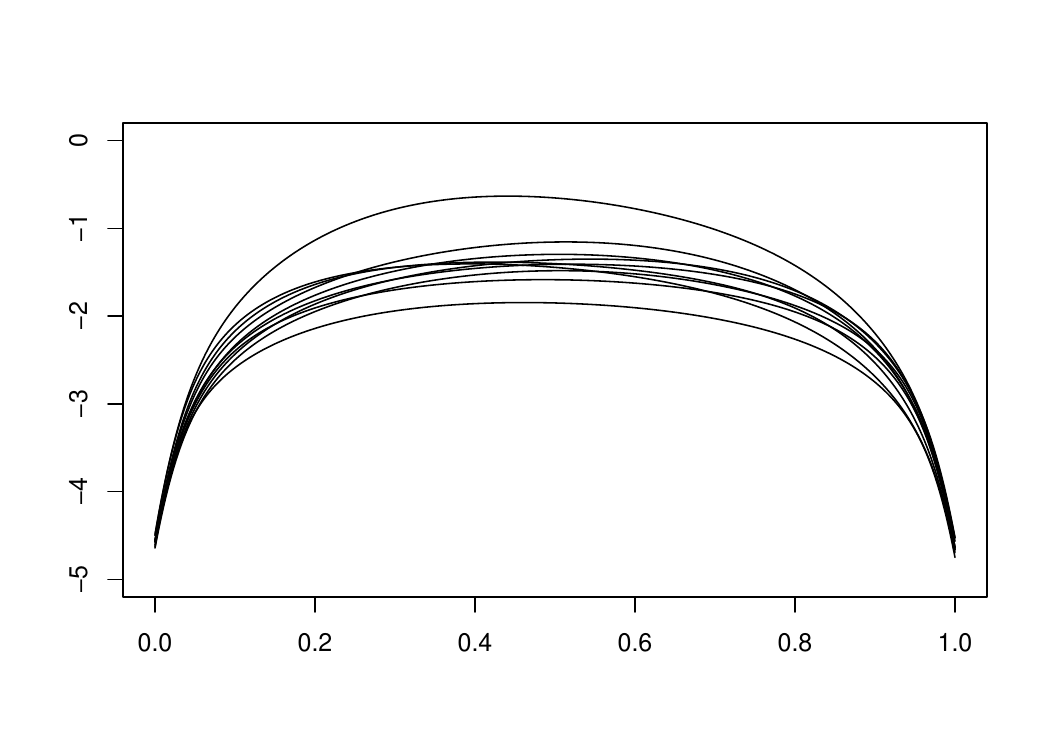}
	\vspace{-0.5cm}
	\caption{Ten approximate samples of $\widetilde{\Psi}_1$ when $n = 2$ and $m$ is the Lebesgue measure on $\mathbb{R}_+^n$ (i.e., $\gamma = 1$ in Example \ref{eg:last.example}). Note that the functions are differentiable in $p$. Here the samples are approximate because the Poisson point process is restricted to $[0, M]^n$ where $M > 0$ is a constant. When $M \rightarrow \infty$ the functions explode to $-\infty$ on the boundary of the simplex.} \label{fig:diagonal}
\end{figure}

\begin{example} \label{eg:last.example}
	Suppose that $h(x) \equiv \gamma > 0$ is constant, so that the intensity measure of $N$ is proportional to the Lebesgue measure on $\mathbb{R}_+^n$. Some approximate examples of $\widetilde{\Psi}_1$ are shown in Figure \ref{fig:diagonal}.  Specializing \eqref{eqn:Poisson.sum.mean} to this case, we have
	\[
	\mathbb{E} \left[ \sum_{x \in N} e^{- \langle p, x \rangle } \right] = \gamma \int_{\mathbb{R}_+^n} e^{-\langle p, x \rangle} dx = \frac{\gamma}{p_1 p_2 \cdots p_n} \to \infty \quad  \mbox{ as } p \to \partial \Delta_n.
	\]
	The proposition above shows that this blow-up also occurs in an almost-sure sense.

\end{example}

\appendix
\section*{Miscellaneous proofs} \label{app}
\setcounter{equation}{0}

\begin{proof}[Proof of Lemma \ref{lem:concave.bound}]
	(i) Let $q \in\Delta_n$ be given and we will prove \eqref{eqn:concave.bound} for $p \neq q$. For $p \in \overline{\Delta}_n$ with $p \neq q$ the half-ray $\{q+t(q-p): t >  0\}$ intersects the boundary $\partial \Delta_n$ at a unique point $p'$, say.  Suppose $p'=q +\lambda(q-p)$, so that $q = \frac{\lambda}{1+\lambda}p+ \frac{1}{1+\lambda}p'$.  For $\psi \in \mathcal{C}$ the concavity of $\psi$ along the line segment $[p,p']$ gives $\psi(q) \ge  \frac{\lambda}{1+\lambda}\psi(p)+ \frac{1}{1+\lambda}\psi(p')$.  Since $\psi(p') \ge 0$ we get $\psi(q) \ge \frac{\lambda}{1+\lambda}\psi(p)$, so that  $\psi(p) \le \frac{1+\lambda}{\lambda} \psi(q)$.
	
	Now $p'-q = \lambda(q-p)$ so that $\lambda = \|p'-q\|/\|q-p\|$ and then
	\[
	\frac{1+\lambda}{\lambda} = \frac{\|p'-p\|}{\|p'-q\|} \leq \frac{\mbox{diam}(\overline{\Delta}_n)}{\mbox{dist}(q, \partial \Delta_n)} =: M_q.
	\]
	
	(ii) Now suppose $q = c^{(j,\epsilon)}$ and $0 \le p_j \leq \epsilon$ with $\epsilon \leq 1/n$.  The value $t = (1-\epsilon)/(n-2+\epsilon)$ gives a point $\widetilde{p} = c^{(j,\epsilon)}+t(c^{(j,\epsilon)}-p)$ and it can be checked easily that $\widetilde{p}_i \geq 0$ for all $i$.  Thus $\widetilde{p} \in \overline{\Delta}_n$ and so $\lambda \geq t$.  Then
	\[
	\frac{1+\lambda}{\lambda} \leq \frac{1+t}{t} = \frac{n-1}{1-\epsilon} \leq n.
	\]
	The final statement now follows by taking $\epsilon = 1/n$ and $j = \arg\min\{p_i: 1 \leq i \leq n\}$, and noting that $\min\{p_i: 1 \leq i \leq n\} \leq 1/n$.
\end{proof}

\begin{proof} [Proof of Lemma \ref{lem:assumption.consequences}]
	Replacing $x$ by $cx$ in \eqref{eqn:origin} gives $h(cx) = c^\alpha h(x)$ so that $h$ is homogeneous of order $\alpha$. 	Since $\int_{\mathbb{R}_+^n} h(x) dx > 0$, $h$ is not identically zero and it is easy to see that $\alpha$ is unique. If $A$ is a bounded subset of $\mathbb{R}_+^n$ then there exists $\sigma > 0$ such that $\sum_{i=1}^n y_i \le \sigma$ for all $y \in A$.  Given $\epsilon > 0$ there is $\kappa_0$ such that
	\[
	\left| \frac{1}{\kappa^{\alpha}} \rho(\kappa x) - h(x) \right| < \epsilon
	\]
	whenever $x \in\Delta_n$ and $0 < \kappa < \kappa_0$.   Let $y \in A$ then $y = cx$ for some $0 < c \le \sigma$ and $x \in\Delta_n$.  For $\kappa < \kappa_0/\sigma$, so that $c \kappa < \kappa_0$, we have (using the homogeneity of $h$)
	\begin{equation*}
	\left| \frac{1}{\kappa^{\alpha}} \rho(\kappa y) - h(y) \right| < c^{\alpha} \epsilon \leq \sigma^{\alpha} \epsilon.
	\end{equation*}
	Thus $\frac{1}{\kappa^{\alpha}} \rho(\kappa x) \rightarrow h(x)$ uniformly on all bounded subsets of $\mathbb{R}_+^n$, and the limit \eqref{eqn:h.integral} follows immediately.

	Taking $A$ to be the set $R = \{x \in \mathbb{R}^n_+: \sum_{i=1}^n x_i \leq 1\}$, we get
	\begin{equation} \label{eqn:h.argument.estimate}
	\mathbb{P}(C \in \kappa R) = \int_{\kappa R} \rho(x)\,dx = \kappa^n \int_R \rho(\kappa y)\,dy  \ge \kappa^{n + \alpha} \left( \int_R h(y) dy - \epsilon \mathrm{vol}(R) \right)
	\end{equation}
	for $\kappa < \kappa_0$.  Since the left hand side is bounded above by 1, we have $\int_R h(y)\,dy <\infty$. Since $h$ is homogeneous, a simple scaling argument gives $\int_A h(y) dy < \infty$ for all bounded sets $A \subset \mathbb{R}_+^n$, completing the proof of \eqref{eqn:h.integral}.
	
	A similar scaling argument shows that $\int_R h(y) dy > 0$, since otherwise $\int_{\mathbb{R}_+^n} h(y) dy = 0$ and Assumption \ref{ass:density} is violated. So $\epsilon > 0$ can be chosen so that $\int_R h(y) dy - \epsilon \mathrm{vol}(R) > 0$. Since the left hand side of \eqref{eqn:h.argument.estimate} converges to $0$ as $\kappa \rightarrow 0^+$, we see that $\kappa^{n + \alpha} \rightarrow 0$ so that $n + \alpha > 0$ as desired.  
\end{proof}

\begin{proof}[Proof of Lemma \ref{lem:Xestimate}] Recall that the random vector $X_1$ has density $\rho$ on $\mathbb{R}^n_+$. Given $p \in \Delta_n$, we define
	\[
	D(p) = \{(u_1, \ldots, u_{n-1}) \in \mathbb{R}_+^{n-1} : \sum_{j = 1}^{n-1} p_j u_j < 1\}.
	\]
	By a straightforward computation, it can be verified that the random variable $\langle p, X_1 \rangle$ has density
	$$
	\widetilde{\rho}(t) = \frac{t^{n-1}}{p_n} \int_{D(p)} \rho(t u_1, t u_2, \ldots, t u_{n-1}, t (1 - \sum_{j = 1}^{n-1} p_j u_j)/p_n) du_1 \cdots d u_{n-1}
	$$
	for $t > 0$.  By Lemma 3.3, as $t \rightarrow 0^+$ we have
	\begin{eqnarray*}
		\widetilde{\rho}(t) &\sim & \frac{t^{n + \alpha - 1}}{p_n} \int_{D(p)} h(u_1, \ldots, u_{n-1}, (1 - \sum_{j = 1}^{n-1} p_j u_j)/p_n) du_1 \cdots du_{n-1} \\
		&= & (n + \alpha) t^{n + \alpha - 1} \int_{R(p, 1)} h(x) dx.
	\end{eqnarray*}
	This proves (i).
	
	Next we consider (ii) and (iii). By (i) there exist $\delta >0$ and $A > 0$ such that $\widetilde{\rho}(t) \le At^{n+\alpha-1}$ for $0 < t \le \delta$.  Then
	\begin{eqnarray*}
		\gamma^{n+\alpha}\E\left[e^{-\gamma \langle p,X_1\rangle} 1_{\gamma \langle p, X_1 \rangle \ge L} \right]
		&=&
		\gamma^{n+\alpha} \int_{L/\gamma}^\infty e^{- \gamma t} \widetilde{\rho}(t)dt\\
		&=&
		\gamma^{n+\alpha} \int_{L/\gamma}^{\max(\delta,L/\gamma)} e^{- \gamma t} \widetilde{\rho}(t)dt \\
		& & \mbox{}
		+  \gamma^{n+\alpha} \int_{\max(\delta,L/\gamma)}^\infty e^{- \gamma t} \widetilde{\rho}(t)dt\\
		&=:& I_1 + I_2,
	\end{eqnarray*}
	say.  If $L/\gamma \ge  \delta$ then $I_1 = 0$. Otherwise
	\begin{eqnarray*}
		I_1  =  \gamma^{n+\alpha} \int_{L/\gamma}^\delta e^{- \gamma t} \widetilde{\rho}(t)dt
		& \le & A\gamma^{n+\alpha} \int_{L/\gamma}^\delta e^{- \gamma t} t^{n+\alpha-1}dt \\
		& = & A \int_L^{\gamma \delta} e^{- u} u^{n+\alpha-1}du \\
		& \leq & A \int_L^\infty e^{- u} u^{n+\alpha-1}du.
	\end{eqnarray*}
	Also $I_2 \le \gamma^{n+\alpha}e^{-\gamma \delta}$.  Together we get
	$$
	\gamma^{n+\alpha}\E\left[e^{-\gamma \langle p,X_1\rangle }1_{\gamma \langle p, X_1 \rangle \ge L} \right]
	\le  A \int_L^\infty e^{- u} u^{n+\alpha-1}du + \gamma^{n+\alpha}e^{-\gamma \delta}.
	$$
	Thus (iii) follows immediately, and taking $L = 0$ we obtain (ii) with $B = A\Gamma(n+\alpha) + \delta^{-(n+\alpha)} \sup\{v^{n+\alpha}e^{-v}: v > 0\}$.
\end{proof}

\section*{Acknowledgements}
This project started when Leonard Wong was a postdoc at the University of Southern California. Part of the research was carried out when he was visiting the Faculty of Mathematics at the University of Vienna. He thanks Walter Schachermayer and Christa Cuchiero for many helpful discussions. His research is partially supported by NSERC Grant RGPIN-2019-04419. We thank the associate editor and the anonymous referees for their helpful comments which greatly improved the paper.

\bibliographystyle{imsart-nameyear}
\bibliography{geometry.ref}

\end{document}